\newtheorem{mydef}{Definition}[section]
\newtheorem{mypp}[mydef]{Proposition}
\newtheorem{myppr}[mydef]{Property}
\newtheorem{mythm}[mydef]{Theorem}
\newtheorem{mylem}[mydef]{Lemma}
\newtheorem{myrem}[mydef]{Remark}
\newtheorem{mynot}[mydef]{Notation}
\DeclareMathOperator{\Id}{Id}
\title{
	Local well-posedness result for a class of non-local quasi-linear systems and its
	application to the justification of Whitham-Boussinesq systems}
\author{Louis Emerald}
\begin{document}
\numberwithin{equation}{section}
\maketitle

\begin{abstract}
In this paper we prove a local well-posedness result for a class of quasi-linear systems of hyperbolic type involving Fourier multipliers. Among the physically relevant systems in this class is a family of Whitham-Boussinesq systems arising in the modeling free-surface water waves. Our result allows to prove the rigorous justification of these systems as approximations to the general water waves system on a relevant time scale, independent of the shallowness parameter.       
\end{abstract}

\section{Introduction}
\subsection{Motivations}
In this work we prove a well-posedness result and a stability result for systems of the form
\begin{align}\label{general Whitham-Boussinesq equation introduction}
\begin{cases}
\partial_t \zeta + (\mathrm{G}_1)^2 \nabla\cdot v + \epsilon \mathrm{G}_2\nabla\cdot(\zeta \mathrm{G}_2 [v]) = 0,\\
\partial_t v + \nabla \zeta + \epsilon(\mathrm{G}_2[v] \cdot \nabla) \mathrm{G}_2[v] = 0,
\end{cases}
\end{align}
where $(\mathrm{G}_1,\mathrm{G}_2)$ are {\em admissible Fourier multipliers} (see definition below), and the unknowns $\zeta$ and $v$ are functions of time $t$ and space $x \in \mathbb{R}^d$ ($d\in\{1,2\}$) valued in $\mathbb{R}$ and $\mathbb{R}^d$ respectively, and $\epsilon\ge 0$.

Several standard equations fit into the above framework. For instance, setting $\mathrm{G}_1=\mathrm{G}_2=\Id$ one obtains the standard shallow-water system, which can be viewed as a special case of the $d$-dimensional isentropic, compressible Euler equation for ideal gases with quadratic pressure law, and whose well-posedness theory is quite standard (see {\em e.g.} \cite{Metivier2008}). This work extends this theory to the general class of equations above. 
	Now, if we set $\mathrm{G}_2=(\Id- \mu b\Delta_x)^{-1}$ and $\mathrm{G}_1=(\Id+ \mu a\Delta_x)^{1/2}(\Id- \mu b \Delta_x)^{-1}$ with $a,b,\mu\ge0$, we obtain (setting $u=\mathrm{G}_2[v]$) the so-called $abcd$-Boussinesq system with $c=0$ and $d=b$. These systems model surface gravity waves in the long wave regime, and have been introduced in full generality by Bona, Chen and Saut in \cite{BonaChenSaut02}. Concerning the well-posedness of the initial-value problem, some early results were proved by the energy method and using the regularization properties of the operator $(\Id+ \Delta_x)^{-1/2}$ in \cite{BonaChenSaut04,Anh10}, but the dependency of the result (and in particular the existence time) with respect to $\epsilon$ and $\mu$ were not specified. Following \cite{DougalisMitsotakisSaut07}, Linares, Pilod and Saut \cite{LinaresPilodSaut12} used dispersive techniques to prove the well-posedness of a large class of Boussinesq systems (when $d=2$) on a time-interval of length $T\gtrsim \epsilon^{-1/2}$ when $\mu\approx\epsilon$. Subsequently, Saut and Xu \cite{SautXu12} (see also \cite{MingSautZhang12,Burtea16,SautXu21}) improved this result by proving well-posedness results on a time interval of length $T\gtrsim \epsilon^{-1}$ by using symmetrization and energy techniques. Importantly, the latter result holds for $d\in\{1,2\}$ and on the full shallow-water regime, that is  $\mu\in(0,1]$, as the authors do not make use of the dispersive nature of the equations. This property is important as it allows to rigorously justify Boussinesq systems as an asymptotic model, improving the shallow-water system (which corresponds to setting $\mu=0$) in the long wave regime, $\epsilon\lesssim\mu$; see the memoir of Lannes \cite{Lannes}.

Our main interest in considering \eqref{general Whitham-Boussinesq equation introduction} stems from the so-called Whitham-Boussinesq systems. Indeed, setting $\mathrm{G}_1^2=\mathrm{G}_2=\frac{\tanh(\sqrt\mu|D|)}{\sqrt\mu|D|}$, we obtain the system introduced by Dinvay, Dutykh and Kalisch in~\cite{DinvayDutykhKalisch19}. It has been recently proved by the author \cite{Emerald2020} that Whitham-Boussinesq systems are approximations of order $O(\mu\epsilon)$, as opposed to $O(\mu)$ for the shallow-water system and $O(\mu^2+\mu\epsilon)$ for Boussinesq systems, of the general water waves model, in the sense of consistency (see Proposition \ref{consistency} below for a precise statement). 
The consistency property is the first step to prove the full justification of a model for surface gravity waves. The second step is to prove stability estimates and the local well-posedness of the model for sufficently regular data on the relevant timescale. The well-posedness of the aforementioned Whitham-Boussinesq systems has been studied in \cite{Dinvay19, DinvaySelbergTesfahun19, Tesfahun22, DenekeDuferaTesfahun22}. 
In \cite{Dinvay19}, Dinvay diagonalizes linear terms and uses strongly the regularization properties of the operator $(\mathrm{G}_2)^{-1}$, from which stems an existence time of length $T\gtrsim \epsilon \mu^{-1/2}$ (in dimension $d=1$).
In \cite{DinvaySelbergTesfahun19}, Dinvay, Selberg and Tesfahun exploit the dispersive nature of the system. This allows them to prove the well-posedness at the energy level in dimension $d=1$, from which global-in-time well-posedness (for sufficiently small initial data) follows. They also prove the local-in-time existence in dimension $d=2$. Yet, due to the use of dispersive estimates, these two results do not provide the control of solutions and their derivatives uniformly with respect to $\mu\in(0,1]$, as required for the rigorous justification of the system as an asymptotic model for water waves. The precise dependency of the time on which solutions exist and are controlled (in a ball of twice the size of the initial data in the relevant Banach space) is clarified in subsequent works: the result of Tesfahun \cite{Tesfahun22} in dimension $d=2$ exhibits a time interval of length $T\gtrsim \epsilon^{-2+\delta} \mu^{3/2-\delta}$ with $\delta>0$ arbitrarily small, while the result of Deneke, Dufera and Tesfahun \cite{DenekeDuferaTesfahun22} in dimension $d=1$ exhibits a time interval of length $T\gtrsim \epsilon^{-1}\mu^{1/2}$.

Let us conclude this state of the art by mentioning the recent work of Paulsen \cite{Paulsen22} where the well-posedness and control of some Whitham-Boussinesq systems, different from the one mentioned above and considered here, is proved on a time-interval of length $T\gtrsim \epsilon^{-1}$ for parameters on the shallow-water regime $(\epsilon,\mu)\in (0,1]^2$. The strategy of the proof is, similarly to ours, based on the energy method and, as stated therein, the two works complete each other well.

\subsection{Definitions and notations}

\begin{mydef}
    For $u:\mathbb{R}^d \to \mathbb{R}^d$ a tempered distribution, denote $\widehat{u}$ its Fourier transform. Let $G:\mathbb{R}^d\to\mathbb{R}$ be a bounded function. The Fourier multiplier associated with $G(\xi)$ is denoted $\mathrm{G}\colonequals G(D)$ and defined by 
    \begin{align*}
   \forall u\in L^2(\mathbb{R}^d),\quad      \widehat{\mathrm{G}[u]}(\xi) = G(\xi)\widehat{u}(\xi).
    \end{align*}
\end{mydef}

\begin{mydef}
    We say that a couple of Fourier multipliers $(\mathrm{G}_1,\mathrm{G}_2)$ is admissible if its symbols satisfy
    \begin{itemize}
        \item for $k\in\{1,2\}$, $G_k\in L^{\infty}(\mathbb{R}^d)$ and $\langle \cdot\rangle \nabla G_k \in L^\infty(\mathbb{R}^d)^d$;
        \item for all $\xi \in \mathbb{R}^d$, we have $G_1(\xi)>0$;
        \item for all $\xi \in \mathbb{R}^d$, we have $|G_2(\xi)| \leq G_1(\xi)$.
    \end{itemize}
\end{mydef}

Denoting $U \colonequals \begin{pmatrix} \zeta \\ v \end{pmatrix}$, we can write systems \eqref{general Whitham-Boussinesq equation introduction} under their matricial form
\begin{align}\label{WB matricial form introduction}
    \partial_t U + \sum\limits_{j=1}^d A_j(U)[\partial_j U] = 0,
\end{align}
where (in dimension $d=2$, the analogous definitions when $d=1$ is straightforward)
\begin{align}\label{A_j introduction}
    \begin{cases} A_1(U)[\circ] = \begin{pmatrix} \epsilon \mathrm{G}_2 [ \mathrm{G}_2[v_1]\circ] & (\mathrm{G}_1)^2[\circ] + \epsilon \mathrm{G}_2[\zeta \mathrm{G}_2[\circ]] & 0 \\ 1 & \epsilon \mathrm{G}_2[v_1] \mathrm{G}_2[\circ] & 0 \\ 0 & 0 & \epsilon \mathrm{G}_2[v_1] \mathrm{G}_2[\circ] \end{pmatrix}, \\ 
    A_2(U)[\circ] = \begin{pmatrix}  \epsilon \mathrm{G}_2 [\mathrm{G}_2[v_2]\circ] & 0 & (\mathrm{G}_1)^2[\circ] + \epsilon\mathrm{G_2}[\zeta\mathrm{G}_2[\circ]] \\ 0 & \epsilon\mathrm{G}_2[v_2] \mathrm{G}_2[\circ] & 0 \\ 1 & 0 & \epsilon\mathrm{G}_2[v_2] \mathrm{G}_2[\circ] \end{pmatrix}.
    \end{cases}
\end{align}

The natural functional setting is given by the energy norms.
\begin{mydef}
    \begin{itemize}
        \item We denote by $\mathcal{S}'(\mathbb{R}^d)$ the set of tempered distributions.
        \item We denote by respectively $|\cdot|_2$ and $(\cdot,\cdot)_2$, the norm and the scalar product in $L^2(\mathbb{R}^d)$.
        
        \item Let $s\geq 0$. We denote by $H^s(\mathbb{R}^d)$ the Sobolev spaces of order $s$ in $L^2(\mathbb{R}^d)$. Denoting $\Lambda^s \colonequals (1-\Delta)^{s/2}$, where $\Delta$ is the Laplace operator in $\mathbb{R}^d$, the norm associated with $H^s(\mathbb{R}^d)$ is $|\cdot|_{H^s} \colonequals |\Lambda^{s}\cdot|_2$.
        
        \item Let $\mathrm{G}_1$ be a Fourier multiplier of order $0$, defined by positive function $G_1$. Let also $s \geq 0$. We define the Banach spaces $X^s(\mathbb{R}^d)$ and $Y^s(\mathbb{R}^d)$ by 
        \begin{align*}
            &X^s(\mathbb{R}^d) \colonequals \big\{ U = \big(\,\zeta\,,v\,\big) \in \mathcal{S}'(\mathbb{R}^d)\times \mathcal{S}'(\mathbb{R}^d)^d, \ \ |U|_{X^s} < +\infty \big\}, \\
            &Y^s(\mathbb{R}^d) \colonequals \big\{ U = \big(\,\zeta\,,v\,\big) \in \mathcal{S}'(\mathbb{R}^d)\times \mathcal{S}'(\mathbb{R}^d)^d, \ \ |U|_{Y^s} < +\infty \big\},
        \end{align*}
        where $|U|_{X^s} \colonequals |\zeta|_{H^s} + |\mathrm{G}_1[v]|_{H^s}$ and $|U|_{Y^s} \colonequals |\zeta|_{H^s} + |\mathrm{G}_1^{-1}[v]|_{H^s}$. These are the energy norms associated with system \eqref{WB matricial form introduction}.
    \end{itemize}
    Henceforth, we denote $C(\lambda_1,\lambda_2,\dots)$ a positive constant depending non-decreasingly on its parameters. Unless it is essential, the dependency with respect to the regularity index $s$ is omitted.
    
    We write $a\lesssim b$ for $a\leq Cb$ with $C>0$ a positive number whose dependency is unessential or clear from the context. We denote $a\approx b$ when $a\lesssim b$ and $b\lesssim a$. We denote  $\langle \cdot \rangle \coloneqq (1+|\cdot|^2)^{1/2}$.
    
\end{mydef}

\subsection{Main results}
The systems \eqref{WB matricial form introduction} are symmetrizable quasi-linear hyperbolic. The main key to prove the local well-posedness of such systems is the following energy estimates on the linearized system.
\begin{mypp}\label{energy estimates introduction}
    Let $s \geq 0$, $t_0 > d/2$ and $(\mathrm{G}_1,\mathrm{G}_2)$ be admissible Fourier multipliers. For any $\epsilon\in(0,1]$, $T>0$, $\underline{U} = \big( \underline{\zeta} , \underline{v} \big) \in W^{1,\infty}([0,T/\epsilon],X^{t_0}(\mathbb{R}^d))\cap L^{\infty}([0,T/\epsilon],X^{\max{(t_0+1,s)}}(\mathbb{R}^d))$ for which there exists $h_{\min} > 0$ such that for all $(t,x) \in [0,T/\epsilon]\times \mathbb{R}^d$ 
    \begin{align}\label{non cavitation}
        1 + \epsilon \underline{\zeta} \geq h_{\min},
    \end{align} 
    and for any $U = \big( \zeta , v \big) \in W^{1,\infty}([0,T/\epsilon],X^{s}(\mathbb{R}^d))\cap L^{\infty}([0,T/\epsilon],X^{s+1}(\mathbb{R}^d))$ satisfying the system
    \begin{align*}
        \partial_t U + \sum\limits_{j=1}^d A_j(\underline{U})[\partial_j U] = \epsilon R,
    \end{align*}
    where $R \in L^{\infty}([0,T/\epsilon],X^s(\mathbb{R}^d))$, and for $j = 1,2$, $A_j(\underline{U})$ is defined by \eqref{A_j introduction}, we have for any $t\in [0,T/\epsilon]$,
    \begin{align}\label{energy estimates of order s introduction}
        |U|_{X^s} \leq \kappa_0 e^{\epsilon \lambda_s t} |U|_{X^s}|_{t=0} + \epsilon \nu_s \int_{0}^{t} |R(t')|_{X^s} \rm{dt}',
    \end{align}
    where $\lambda_s, \nu_s := C(\frac{1}{h_{\min}},T,|\underline{U}|_{W^{1,\infty}_t X^{t_0}},|\underline{U}|_{L^{\infty}_t X^{\max{(t_0+1,s)}}})$ and $\kappa_0 \colonequals C(\frac{1}{h_{\min}},|\underline{U}|_{X^{t_0}}|_{t=0})$.
\end{mypp}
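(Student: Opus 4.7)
The plan is to adapt the classical symmetrization-plus-energy-method to the non-local setting, using a symmetrizer dictated by the $X^s$ norm itself. Introduce the equivalent energy
\begin{align*}
E_s(U)^2\colonequals |\Lambda^s\zeta|_2^2+|\Lambda^s\mathrm{G}_1[v]|_2^2 = \bigl(\mathrm{S}\,\Lambda^s U,\Lambda^s U\bigr)_2,\qquad \mathrm{S}\colonequals\mathrm{diag}\bigl(\mathrm{Id},(\mathrm{G}_1)^2,\ldots,(\mathrm{G}_1)^2\bigr),
\end{align*}
which is equivalent to $|U|_{X^s}^2$. Differentiate $E_s(U)^2$ in time along the equation and aim for the differential inequality
\begin{align*}
\tfrac{d}{dt}E_s(U)^2\le \epsilon\,\lambda_s\,E_s(U)^2+\epsilon\,\nu_s\,|R|_{X^s}\,E_s(U).
\end{align*}
Grönwall's lemma then delivers \eqref{energy estimates of order s introduction}, with the prefactor $\kappa_0$ coming from the equivalence $E_s(U)|_{t=0}\approx|U|_{X^s}|_{t=0}$. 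Depending on how the $\mathcal{O}(\epsilon)$ off-diagonal perturbation is treated, it may be convenient to refine $\mathrm{S}$ by inserting a factor $(1+\epsilon\underline{\zeta})$ in the $\zeta$-block, in which case the non-cavitation condition $1+\epsilon\underline{\zeta}\ge h_{\min}$ is precisely what keeps the refined $\mathrm{S}$ positive-definite and equivalent to the $X^s$-norm.

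After differentiating and commuting $\Lambda^s$ past $A_j(\underline{U})$, the right-hand side splits into a principal part $-\sum_j(\mathrm{S}A_j(\underline{U})\Lambda^s\partial_j U,\Lambda^s U)_2$, a commutator part $-\sum_j(\mathrm{S}[\Lambda^s,A_j(\underline{U})]\partial_j U,\Lambda^s U)_2$, a contribution from $\partial_t\mathrm{S}$ when $\mathrm{S}$ depends on $\underline{U}$, and the forcing. The choice of $\mathrm{S}$ is tailored so that the $\mathcal{O}(1)$ linear coupling --- $(\mathrm{G}_1)^2\nabla\cdot$ in the $\zeta$-line paired with $\nabla$ in the $v$-line --- becomes anti-self-adjoint after weighting by $\mathrm{S}$ and cancels after integration by parts. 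All remaining entries of $\mathrm{S}A_j(\underline{U})$ are $\mathcal{O}(\epsilon)$: the transport-type operators $\mathrm{G}_2 M_{\mathrm{G}_2[\underline{v}_j]}$ and $M_{\mathrm{G}_2[\underline{v}_j]}\mathrm{G}_2$ in the diagonal slots are mutually adjoint modulo the commutator $[\mathrm{G}_2,M_{\mathrm{G}_2[\underline{v}_j]}]$, while the cross-term $\mathrm{G}_2 M_{\underline{\zeta}}\mathrm{G}_2$ in the off-diagonal slot is either absorbed into the refined symmetrizer or estimated directly. The $\partial_t\mathrm{S}$ contribution, if present, brings in $\epsilon|\partial_t\underline{\zeta}|_{L^\infty}$, which is why the hypothesis $\underline{U}\in W^{1,\infty}_t X^{t_0}$ (combined with the Sobolev embedding $H^{t_0}\hookrightarrow L^\infty$) appears.

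The core technical step is closing the commutator term $[\Lambda^s,A_j(\underline{U})]$. For standard multiplication commutators $[\Lambda^s,f\cdot]$ one uses the Kato--Ponce commutator estimate. For the non-local pieces $[\mathrm{G}_k,M_f]$, appearing both directly and through the composite $\mathrm{G}_2 M_f\mathrm{G}_2$, the admissibility hypothesis $\langle\xi\rangle\nabla G_k(\xi)\in L^\infty$ is precisely what yields an estimate of the form
\begin{align*}
\bigl|[\mathrm{G}_k,M_f]g\bigr|_{H^s}\lesssim |f|_{H^{\max(s,t_0+1)}}\,|g|_{H^{\max(s-1,0)}},
\end{align*}
obtained by expanding the commutator in Fourier variables and exploiting the one-derivative gain in the symbol $\nabla G_k$. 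This gain is exactly what is needed to trade the derivative falling on $U$ in $\partial_j U$ for a derivative on $\underline{U}$, absorbed by $|\underline{U}|_{L^\infty_t X^{\max(t_0+1,s)}}$. Collecting everything, all error terms are bounded by $\epsilon\,C(h_{\min}^{-1},|\underline{U}|_{W^{1,\infty}_t X^{t_0}},|\underline{U}|_{L^\infty_t X^{\max(t_0+1,s)}})\,E_s(U)^2$, plus the forcing contribution $\epsilon\,\nu_s|R|_{X^s}E_s(U)$, and Grönwall concludes. The main obstacle is the bookkeeping of the non-local commutators: since $\mathrm{G}_2 M_f\mathrm{G}_2$ is not a classical pseudodifferential operator, one must decompose $[\Lambda^s,\mathrm{G}_2 M_f\mathrm{G}_2]$ by hand into pieces each amenable to either Kato--Ponce or the $[\mathrm{G}_k,M_f]$ estimate above, and carry this out uniformly in $s\ge 0$ by splitting into the ranges $s\le t_0$ and $s>t_0$ as usual.
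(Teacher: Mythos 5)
Your overall architecture --- symmetrize, show the non-symmetric remainder is an $O(\epsilon)$ zeroth-order perturbation via the commutator estimate for $[\mathrm{G}_k,f]$ furnished by the admissibility condition $\langle\cdot\rangle\nabla G_k\in L^\infty$, commute $\Lambda^s$ through the system to reduce to the $s=0$ case, and conclude by Gr\"onwall --- is exactly the paper's. But your symmetrizer is wrong, and this is not a bookkeeping issue: it is the one place where the structure of the system, the non-cavitation condition and the hypothesis $|G_2|\le G_1$ actually enter. With $\mathrm{S}=\mathrm{diag}(\mathrm{Id},(\mathrm{G}_1)^2,\dots)$ the $(1,2)$-entry of $\mathrm{S}A_1(\underline U)$ is $(\mathrm{G}_1)^2[\circ]+\epsilon\mathrm{G}_2[\underline\zeta\,\mathrm{G}_2[\circ]]$ while the $(2,1)$-entry is $(\mathrm{G}_1)^2[\circ]$; the mismatch $\epsilon\mathrm{G}_2[\underline\zeta\,\mathrm{G}_2[\partial_1 v_1]]$ tested against $\zeta$ is \emph{not} an order-zero term: since $\mathrm{G}_2$ is merely bounded (not smoothing --- e.g.\ $\mathrm{G}_2=\Id$ is admissible), one can only bound it by $\epsilon|\underline\zeta|_{L^\infty}|\mathrm{G}_1[v]|_{H^1}|\zeta|_{L^2}$, which loses a full derivative relative to $|U|_{X^0}^2$, and integrating by parts merely transfers the loss onto $\zeta$. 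So ``estimated directly'' fails. Your proposed refinement --- a factor $(1+\epsilon\underline\zeta)$ in the $\zeta$-block --- also fails: it creates a new unsymmetrized $O(\epsilon)$ first-order term $\epsilon\underline\zeta(\mathrm{G}_1)^2\partial_1 v_1$ in the $(1,2)$ slot and cancels nothing in the $(2,1)$ slot. (Already in the classical case $\mathrm{G}_1=\mathrm{G}_2=\Id$, i.e.\ shallow water, the weight $1+\epsilon\underline\zeta$ must sit in the \emph{velocity} block, not the $\zeta$-block.)

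The correct symmetrizer, used in the paper, is $S_0(\underline U)=\mathrm{diag}\bigl(1,\,(\mathrm{G}_1)^2[\circ]+\epsilon\mathrm{G}_2[\underline\zeta\,\mathrm{G}_2[\circ]]\bigr)$: then the $(1,2)$ and $(2,1)$ entries of $S_0A_j(\underline U)$ are \emph{identical and self-adjoint}, so their contribution to $\frac{d}{dt}(S_0U,U)_2$ reduces after integration by parts to $-(\zeta,\epsilon\mathrm{G}_2[(\partial_j\underline\zeta)\mathrm{G}_2[v_j]])_2$, which is genuinely $O(\epsilon)$ and order zero; the only surviving antisymmetric parts are commutators $[\mathrm{G}_k,\mathrm{G}_2[\underline v_j]]$, handled as you describe. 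The price is that coercivity of $S_0(\underline U)$ is no longer automatic: one needs $((\mathrm{G}_1)^2[v]+\epsilon\mathrm{G}_2[\underline\zeta\,\mathrm{G}_2[v]],v)_2\ge h_{\min}|\mathrm{G}_1[v]|_2^2$, which is precisely where $|G_2|\le G_1$ and $1+\epsilon\underline\zeta\ge h_{\min}$ are used. Without this correction your differential inequality does not close, so the proof as written has a genuine gap.
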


Using a Picard iteration scheme and the regularization method from Chapter 7 in \cite{Metivier2008} we infer the following well-posedness result on the systems \eqref{WB matricial form introduction}.
\begin{mythm}\label{Thm wanted}
    Let $s > d/2 +1$, $h_{\min} >0$ and $M > 0$. Let also $(\mathrm{G}_1,\mathrm{G}_2)$ be a couple of admissible Fourier multipliers. There exist $T > 0$ and $C > 0$ such that for all $\epsilon \in (0,1]$, $U_0 \in X^s(\mathbb{R}^d)$ with $|U_0|_{X^s} \leq M$ and satisfying \eqref{non cavitation}, there exists a unique solution $U \in C^0([0,T/\epsilon],X^s(\mathbb{R}^d))$ of the Cauchy problem
    \begin{align*}
        \begin{cases}
            \partial_t U + \sum\limits_{j=1}^d A_j(U)\partial_j U = 0, \\
            U|_{t=0} = U_0.
        \end{cases}
    \end{align*}
    Moreover $|U|_{L^{\infty}([0,T/\epsilon], X^s)} \leq C |U_0|_{X^s}$.
\end{mythm}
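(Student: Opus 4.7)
The plan is to combine the energy estimate of Proposition \ref{energy estimates introduction} with a standard iterative construction, following the regularization scheme of Chapter 7 in \cite{Metivier2008}. First I would reduce the non-cavitation condition to a time-persistent statement: since $s-1>d/2$, the embedding $X^s\hookrightarrow L^\infty$ together with the evolution equation for $\zeta$ gives $\|\partial_t \underline{\zeta}\|_{L^\infty} \lesssim C(|\underline{U}|_{X^s})$, so if $|\underline{U}|_{L^\infty_t X^s} \leq 2\kappa_0 M$ then $1+\epsilon\underline{\zeta} \geq h_{\min}/2$ on a time interval $[0,T/\epsilon]$ for some $T = T(h_{\min}, M)>0$ independent of $\epsilon$. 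This is the window on which I will close the bootstrap.

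Next I would construct approximate solutions. Fix a mollifier $J_\delta = \chi(\delta D)$ with $\chi\in C_c^\infty$, $\chi(0)=1$, and set $U^0(t,x) = J_\delta U_0(x)$. Given $U^n \in C^0([0,T/\epsilon], X^{s+1})$ satisfying the non-cavitation condition with constant $h_{\min}/2$, define $U^{n+1}$ as the solution of the linear Cauchy problem
\begin{align*}
\partial_t U^{n+1} + \sum_{j=1}^d J_\delta A_j(U^n) J_\delta [\partial_j U^{n+1}] = 0, \qquad U^{n+1}|_{t=0}=J_\delta U_0.
\end{align*}
Because $J_\delta$ is smoothing of arbitrary order, this is a genuine ODE in any $X^\sigma$ so $U^{n+1}$ exists globally and lies in $C^1([0,T/\epsilon], X^{s+1})$. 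Proposition \ref{energy estimates introduction}, applied at level $s$ with $\underline{U} = U^n$ and $R=0$, yields
\begin{align*}
|U^{n+1}(t)|_{X^s} \leq \kappa_0 e^{\epsilon \lambda_s t} |U_0|_{X^s},
\end{align*}
with constants depending on $|U^n|_{L^\infty_t X^s}$ and on $h_{\min}/2$. A standard induction then shows that, up to shrinking $T$ (still independent of $\delta$, $n$, $\epsilon$), both the bound $|U^n|_{L^\infty_t X^s}\leq 2\kappa_0 M$ and the non-cavitation assumption propagate to $U^{n+1}$, giving a uniform-in-$(n,\delta,\epsilon)$ bound on $[0,T/\epsilon]$.

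To obtain convergence I would look at differences. The equation for $V^n\colonequals U^{n+1}-U^n$ takes the form
\begin{align*}
\partial_t V^n + \sum_j J_\delta A_j(U^n) J_\delta \partial_j V^n = -\sum_j J_\delta \bigl(A_j(U^n)-A_j(U^{n-1})\bigr) J_\delta \partial_j U^n,
\end{align*}
and the right-hand side is $\epsilon$-linear in $V^{n-1}$ in any $X^{s-1}$ norm thanks to the tame product and commutator estimates one obtains from the admissibility assumption $\langle\cdot\rangle\nabla G_k \in L^\infty$ (which controls $[\mathrm{G}_k,f]$). Applying the energy estimate at regularity $s-1>d/2$ to $V^n$, with $\underline{U}=U^n$, yields a contraction in $C^0([0,T/\epsilon], X^{s-1})$ after a further restriction of $T$. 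Passing to the limit $n\to\infty$ at fixed $\delta$ produces a solution $U^\delta$ of the mollified problem in $L^\infty_t X^s \cap C^0_t X^{s-1}$, with the same uniform bounds. A Bona–Smith type argument, once more using Proposition \ref{energy estimates introduction} at regularity $s-1$, shows that $\{U^\delta\}_{\delta\to 0}$ is Cauchy in $C^0([0,T/\epsilon], X^{s-1})$, while weak-$*$ compactness gives a limit $U\in L^\infty_t X^s$; time continuity in $X^s$ follows from the usual weak continuity plus norm-continuity argument (with continuity of $t\mapsto |\mathrm{G}_1[v]|_{H^s}$ deduced from the energy identity the admissible structure provides). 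Uniqueness and the bound $|U|_{L^\infty_t X^s}\leq C |U_0|_{X^s}$ are by now direct consequences of Proposition \ref{energy estimates introduction} applied to the difference of two solutions at regularity $s-1$.

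The main technical obstacle is the passage to the limit in a way that is uniform in $\epsilon$: the energy estimate must be invoked with $\underline{U}$ that does not itself blow up when $\epsilon\to 0$, so I must be careful that none of the constants $\lambda_s, \nu_s, \kappa_0$ from Proposition \ref{energy estimates introduction} hide a factor growing in $\epsilon^{-1}$, which in turn forces the non-cavitation window and the contraction estimate to be proved on the slow timescale $T/\epsilon$ rather than on a naive $O(1)$ interval. The remaining difficulties — commutator bounds involving the Fourier multipliers $\mathrm{G}_1, \mathrm{G}_2$, and the transfer of estimates from the $X^s$ norm to $Y^s$-type norms when the admissibility condition is used — are routine given the assumptions on $(\mathrm{G}_1,\mathrm{G}_2)$.
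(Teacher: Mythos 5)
Your proposal is correct in substance and belongs to the same family of arguments as the paper's proof --- energy estimates for the symmetrized system combined with Friedrichs regularization and a Picard iteration, as announced via \cite{Metivier2008} --- but the architecture is genuinely different. The paper confines the mollifier $J_\alpha=(1-\alpha\Delta)^{-1/2}$ to the \emph{linear} theory: Theorem \ref{well posedness linearized system} first solves the linearized Cauchy problem by regularizing only that problem and passing to a weak* limit, with Lemma \ref{Uniqueness} upgrading any bounded solution to $C^0_t X^s$ together with the energy estimates; the nonlinear iteration (Lemma \ref{Uniform bounds}) then uses the \emph{exact} linearized equations with no mollifier, the iterates converge in the low norm $C^0_t X^0$ via a factorial bound (Lemma \ref{Cauchy sequence}), and the $C^0_t X^s$ regularity and uniqueness of the limit are recovered by feeding $U$ back into the linear theory with $\underline{U}=U$. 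You instead mollify the iteration itself ($J_\delta A_j(U^n)J_\delta\partial_j$), which forces a double limit ($n\to\infty$ then $\delta\to 0$), a Bona--Smith step, and a separate weak-plus-norm continuity argument to reach $C^0_t X^s$; on the other hand your contraction lives at the more natural level $X^{s-1}$ and you avoid stating a standalone linear well-posedness theorem. Your route buys a single self-contained scheme; the paper's buys that the delicate removal of the mollifier is performed once, in the linear setting, where it is cleanest. One caveat: you invoke Proposition \ref{energy estimates introduction} verbatim for the mollified system, but that proposition is stated for the unmollified linearization; you must re-run the symmetrized energy computation with the commutators $[J_\delta,S_0(\underline{U})]$ and $[J_\delta,A_j(\underline{U})]$ and check, via Proposition \ref{Commutator estimates}, that the resulting constants are uniform in $\delta$ and hide no factor of $\epsilon^{-1}$. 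This is precisely the content of the paper's regularized linear step, so it is not a gap in substance, but it is where the real work sits and should not be cited away.
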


We also have the following stability result.
\begin{mypp}\label{Stability}
    Let the assumptions of Theorem \ref{Thm wanted} be satisfied and use the notations therein. Assume also that there exists $\widetilde{U} \in C([0,\widetilde T/\epsilon],X^s(\mathbb{R}^d))$ solution of 
    \begin{align*}
        \partial_t\widetilde{U} + \sum\limits_{j=1}^d A_j(\widetilde{U})\partial_j\widetilde{U} = \widetilde{R},
    \end{align*}
    where $\widetilde{R} \in L^{\infty}([0,T/\epsilon],X^{s-1}(\mathbb{R}^d))$. Then, the error with respect to the solution $U \in C^0([0,T/\epsilon],X^s(\mathbb{R}^d))$ given by Theorem \ref{Thm wanted} satisfies for all times $t \in [0,\min{(\widetilde T,T)}/\epsilon]$,
    \begin{align*}
        |\mathfrak{e}|_{L^{\infty}([0,t],X^{s-1})} \leq C(\tfrac{1}{h_{\min}},|U|_{L^{\infty}([0,t], X^s)},|\widetilde{U}|_{L^{\infty}([0,t], X^s)})(|\mathfrak{e}|_{X^{s-1}}|_{t=0} + t|\widetilde{R}|_{L^{\infty}([0,t], X^{s-1})}),
    \end{align*}
    where $\mathfrak{e} \coloneqq U - \widetilde{U}$.
\end{mypp}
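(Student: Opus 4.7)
The plan is to apply Proposition \ref{energy estimates introduction} to the equation satisfied by the error $\mathfrak{e}\coloneqq U-\widetilde U$, at the lower regularity level $s-1$, and close an estimate via Gronwall. Subtracting the two equations and using the telescoping identity $A_j(U)\partial_j U - A_j(\widetilde U)\partial_j \widetilde U = A_j(U)\partial_j \mathfrak{e} + \bigl(A_j(U)-A_j(\widetilde U)\bigr)\partial_j \widetilde U$ yields
\begin{align*}
\partial_t \mathfrak{e} + \sum_{j=1}^d A_j(U)\partial_j \mathfrak{e} = -\widetilde R - \sum_{j=1}^d \bigl(A_j(U)-A_j(\widetilde U)\bigr)\partial_j \widetilde U.
\end{align*}
The crucial structural observation is that the only $U$-independent entries of $A_j(U)$, namely the constant $1$ and the multiplier $(\mathrm{G}_1)^2$, cancel in the difference $A_j(U)-A_j(\widetilde U)$, so this difference is genuinely of size $\epsilon$ and depends linearly on $\mathfrak{e}$. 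The right-hand side thus fits the form $\epsilon R$ required by Proposition \ref{energy estimates introduction} with $R = -\epsilon^{-1}\widetilde R - \epsilon^{-1}\sum_j\bigl(A_j(U)-A_j(\widetilde U)\bigr)\partial_j \widetilde U$, the second summand being $O(1)$ in $\epsilon$.

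I would next verify that $\underline U \coloneqq U$ satisfies the hypotheses of Proposition \ref{energy estimates introduction} at regularity $s-1$: fixing any $t_0 \in (d/2,s-1)$ (possible since $s>d/2+1$), Theorem \ref{Thm wanted} provides $U \in L^\infty_t X^s \subset L^\infty_t X^{\max(t_0+1,s-1)}$, the PDE satisfied by $U$ together with tame product estimates yields $\partial_t U \in L^\infty_t X^{s-1} \subset L^\infty_t X^{t_0}$, and the non-cavitation bound \eqref{non cavitation} for $U$ is furnished by Theorem \ref{Thm wanted} (shrinking $T$ if necessary). The proposition then delivers
\begin{align*}
|\mathfrak{e}(t)|_{X^{s-1}} \leq \kappa_0\,e^{\epsilon\lambda_{s-1}t}|\mathfrak{e}(0)|_{X^{s-1}} + \nu_{s-1}\int_0^t |\widetilde R(t')|_{X^{s-1}}\,dt' + \epsilon\nu_{s-1}\int_0^t \Bigl|\sum_j \bigl(A_j(U)-A_j(\widetilde U)\bigr)\partial_j \widetilde U\Bigr|_{X^{s-1}}\,dt',
\end{align*}
where the $\epsilon^{-1}$ absorbed into $R$ has cancelled the $\epsilon$ in front of the $\widetilde R$ contribution.

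The central ingredient still to establish is the tame product bound
\begin{align*}
\Bigl|\sum_j \bigl(A_j(U)-A_j(\widetilde U)\bigr)\partial_j \widetilde U\Bigr|_{X^{s-1}} \leq C\bigl(|U|_{X^s},|\widetilde U|_{X^s}\bigr)\,|\mathfrak{e}|_{X^{s-1}},
\end{align*}
which rests on the Banach algebra property of $H^{s-1}$ (since $s-1>d/2$) together with the admissibility inequality $|\mathrm{G}_2 w|_{H^{\sigma}}\leq |\mathrm{G}_1 w|_{H^{\sigma}}$ (a direct Plancherel consequence of $|G_2|\leq G_1$), which allows one to absorb every occurrence of $\mathrm{G}_2 \mathfrak{e}_v$ into $|\mathfrak{e}|_{X^{s-1}}$ even though $\mathrm{G}_1$ is not invertible. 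Feeding this tame bound back into the previous inequality and applying Gronwall's lemma on $[0,\min(T,\widetilde T)/\epsilon]$, where $\epsilon t \leq \min(T,\widetilde T)$ keeps the exponential prefactors bounded by a constant depending only on the stated parameters, yields the advertised estimate. The main obstacle is precisely this tame bound: the entries of $A_j$ involve composite structures such as $\mathrm{G}_2[\zeta\,\mathrm{G}_2[\cdot]]$ and the first-order divergence-form term inherited from the first equation of \eqref{general Whitham-Boussinesq equation introduction}, and handling them in the $\mathrm{G}_1$-weighted norm $X^{s-1}$ while spending exactly one derivative on $\widetilde U$ is where the compatibility $|G_2|\leq G_1$ between the two multipliers must be used in an essential way.
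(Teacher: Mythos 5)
Your proposal follows essentially the same route as the paper's proof: subtract the two systems to get the error equation $\partial_t\mathfrak{e}+\sum_j A_j(U)\partial_j\mathfrak{e}=\epsilon F$ with $F=-\epsilon^{-1}\widetilde R-\epsilon^{-1}\sum_j(A_j(U)-A_j(\widetilde U))\partial_j\widetilde U$, bound $F$ in $X^{s-1}$ by the product estimates and $|G_2|\le G_1$, apply the order-$(s-1)$ energy estimate with $\underline U=U$ (checking $\partial_t U\in L^\infty_t X^{s-1}$ via the equation), and conclude by Gronwall. The argument is correct and matches the paper, including the observation that the constant entries of $A_j$ cancel so that the difference is $O(\epsilon)$ and linear in $\mathfrak{e}$.
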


\begin{myrem}\label{rem}
	In all previous statement, the dependency of the constants and the existence time with respect to the admissible pair of Fourier multipliers $(\mathrm{G}_1,\mathrm{G}_2)$ occurs only through $|(G_k,\langle \cdot \rangle \nabla G_k)|_{L^\infty}$ for $k\in\{1,2\}$.
\end{myrem}

Let us now turn to the rigorous justification of the systems \eqref{WB matricial form introduction} in the context of irrotational free surface flows. We first recall the notations and the physical meaning of the different variables (see \cite{Emerald2020}).
\begin{itemize}
    \item The free surface elevation is the graph of $\zeta$, which is a function of time $t$ and horizontal space $x\in\mathbb{R}^d$.
    \item $v(t,x)$ is the gradient of the trace at the surface of the velocity potential.
\end{itemize}
Moreover every variable and function in \eqref{WB matricial form introduction} is compared with physical characteristic parameters of the same dimension. Among those are the characteristic water depth $H_0$, the characteristic wave amplitude $a_{\rm{surf}}$ and the characteristic wavelength  $L$.
These physical characteristic parameters define two dimensionless paramaters of main importance:
    \begin{align*}
        \mu \colonequals \frac{H_0^{2}}{L^{2}}, \ \ \rm{and} \ \ \epsilon  \colonequals \frac{a_{\rm{surf}}}{H_0}.
    \end{align*}
    The first parameter, $\mu$, is called the shallow water parameter. The second parameter, $\epsilon$, is called the nonlinearity parameter. In the following we restrict ourselves to the shallow-water regime: $(\mu,\epsilon) \in (0,1]^2$.

\begin{mynot}\label{notation G dependant de mu}
    From now on the Fourier multipliers are denoted $\mathrm{G}_1^\mu$ and $\mathrm{G}_2^\mu$ as they depend on $\mu\in(0,1]$ and are of the form $\mathrm{G}_k^{\mu} = G_k(\sqrt{\mu}D)$, $k = 1,2$ where the symbols $G_1$ and $G_2$ are independent of $\mu$.
\end{mynot} 

Proposition 1.15 in \cite{Emerald2020} can be easily extended to the following result.
\begin{mypp}\label{consistency}
    Let  $s \geq 0$. In \eqref{WB matricial form introduction}, let $\mathrm{G}_1^{\mu} \colonequals \sqrt{\frac{\tanh{(\sqrt{\mu}|D|)}}{\sqrt{\mu}|D|}}$. Let also $\mathrm{G}_2^{\mu}$ be a Fourier multiplier such that $|G_2^{\mu}(\xi)-1|\lesssim \mu |\xi|^2$.
    Then any classical solution $(\zeta,\psi)$ of the water waves equations satisfying the non-cavitation hypothesis \eqref{non cavitation}, with $U = (\zeta,\nabla\psi) \in C^0([0,\widetilde T/\epsilon],H^{s+4}(\mathbb{R}^d)^{1+d})$, satisfy the system \eqref{WB matricial form introduction} up to a remainder term of order $O(\mu\epsilon)$, i.e. for any $t\in[0,\widetilde T/\epsilon]$,
    \begin{align*}
        \partial_t U + \sum\limits_{j=1}^d A_j(U)\partial_j U = \mu\epsilon R,
    \end{align*}
    where $|R(t,\cdot)|_{H^s} \leq C(\frac{1}{h_{\min}},|\zeta|_{H^{s+4}},|\nabla\psi|_{H^{s+4}})$, uniformly with respect to $(\mu,\epsilon)\in (0,1]^2$. 
    
    \indent 
    
    We say that the water waves equations are consistent at order $O(\mu\epsilon)$ with the systems \eqref{WB matricial form introduction} in the shallow water regime. 
\end{mypp}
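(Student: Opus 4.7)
My plan is to reduce the statement to Proposition~1.15 of \cite{Emerald2020}, which establishes the analogous consistency for the canonical Whitham--Boussinesq system (where $\mathrm{G}_2^\mu = (\mathrm{G}_1^\mu)^2 = \frac{\tanh(\sqrt\mu|D|)}{\sqrt\mu|D|}$), and then to argue that replacing $(\mathrm{G}_1^\mu)^2$ by any admissible $\mathrm{G}_2^\mu$ satisfying the hypothesis $|G_2^\mu(\xi)-1|\lesssim \mu|\xi|^2$ only adds an $O(\mu\epsilon)$ remainder to the residual. Concretely, starting from a classical solution $(\zeta,\psi)$ of the water waves system in Zakharov form, the result of \cite{Emerald2020} already yields
\begin{align*}
    \partial_t U + \sum_{j=1}^{d} \widetilde A_j(U)\,\partial_j U = \mu\epsilon R_0,
\end{align*}
where $\widetilde A_j$ is the matrix \eqref{A_j introduction} built with the reference choice $\widetilde{\mathrm{G}}_2^\mu := (\mathrm{G}_1^\mu)^2$, and $|R_0(t,\cdot)|_{H^s} \le C(\tfrac{1}{h_{\min}}, |\zeta|_{H^{s+4}}, |\nabla\psi|_{H^{s+4}})$.

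The key observation is that $(\mathrm{G}_1^\mu)^2$ itself satisfies the hypothesis, since the Taylor expansion $\frac{\tanh x}{x} = 1 - x^2/3 + O(x^4)$ gives $|(G_1^\mu)^2(\xi) - 1| \lesssim \mu|\xi|^2$. Hence the symbol $K^\mu(\xi) := G_2^\mu(\xi) - (G_1^\mu)^2(\xi)$ obeys $|K^\mu(\xi)| \lesssim \mu\langle\xi\rangle^2$ (using boundedness at high frequency), so the Fourier multiplier $\mathrm{K}^\mu$ is $O(\mu)$ with a loss of two derivatives: $|\mathrm{K}^\mu[f]|_{H^s} \lesssim \mu|f|_{H^{s+2}}$. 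Moreover both $\mathrm{G}_2^\mu$ and $(\mathrm{G}_1^\mu)^2$ are bounded on every $H^s$ uniformly in $\mu\in(0,1]$ by the admissibility assumption.

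It then remains to expand the differences of the nonlinear contributions
\begin{align*}
    \mathrm{G}_2^\mu\nabla\cdot\bigl(\zeta \mathrm{G}_2^\mu[v]\bigr) - (\mathrm{G}_1^\mu)^2\nabla\cdot\bigl(\zeta (\mathrm{G}_1^\mu)^2[v]\bigr), \qquad (\mathrm{G}_2^\mu[v]\cdot\nabla)\mathrm{G}_2^\mu[v] - ((\mathrm{G}_1^\mu)^2[v]\cdot\nabla)(\mathrm{G}_1^\mu)^2[v]
\end{align*}
as telescoping sums where one factor at a time is replaced by $\mathrm{K}^\mu$ applied to the corresponding quantity, while the remaining factors are handled by tame Sobolev product estimates. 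Each resulting term is bounded in $H^s$ by $\mu \cdot C(|\zeta|_{H^{s+3}}, |v|_{H^{s+3}})$. Multiplying by the prefactor $\epsilon$ present in $A_j(U) - \widetilde A_j(U)$, these corrections are absorbed into an augmented remainder $R = R_0 + R_1$ satisfying the desired bound.

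The main obstacle I foresee is purely a matter of bookkeeping: carefully ensuring that the product and commutator estimates applied through the telescoping never consume more than four additional derivatives on $(\zeta, \nabla\psi)$, so that the final constant depends only on the $H^{s+4}$ norm as claimed, uniformly in $(\mu,\epsilon)\in(0,1]^2$. The rest reduces to the symbol estimate on $\mathrm{K}^\mu$ combined with standard Sobolev calculus.
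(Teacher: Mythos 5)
Your proposal is correct and follows essentially the same route as the paper: the paper itself gives no detailed proof, merely stating that Proposition~1.15 of \cite{Emerald2020} ``can be easily extended'' to this statement. Your reduction to the reference case $\mathrm{G}_2^\mu=(\mathrm{G}_1^\mu)^2$ followed by the telescoping perturbation using the symbol bound $|G_2^\mu(\xi)-(G_1^\mu)^2(\xi)|\lesssim \mu|\xi|^2$ (hence $|\mathrm{K}^\mu[f]|_{H^s}\lesssim \mu|f|_{H^{s+2}}$) is precisely the natural way to carry out that extension, and the derivative count stays within the stated $H^{s+4}$ budget.
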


\begin{myrem}
Within the assumptions of Proposition \ref{consistency}, the systems \eqref{WB matricial form introduction} are Whitham-Boussinesq systems (see \cite{Emerald2020}).
As aforementioned, setting $\mathrm{G}_2^{\mu} = (\mathrm{G}_1^{\mu})^2$, we obtain the system by Dinvay, Dutykh and Kalisch in~\cite{DinvayDutykhKalisch19}. Previously existing well-posedness results use the regularizing effect of $(\mathrm{G}_1^{\mu})^2$ which gives the system a semi-linear structure (namely the system can be solved through the Duhamel formula).
Theorem \ref{Thm wanted} allows us to set, for instance, $\mathrm{G}_2^{\mu} = \mathrm{G}_1^{\mu}$, thus proving the local well-posedness for a Whitham-Boussinesq system with a genuinely quasi-linear structure. 
When $\mathrm{G}_2^{\mu} = \Id$, the system \eqref{WB matricial form introduction} is a Whitham-Boussinesq system, yet the question of local well-posedness for this system is open.
\end{myrem}

From Theorem \ref{Thm wanted}, Proposition \ref{Stability} and Proposition \ref{consistency} we infer the full justification of a class of Whitham-Boussinesq systems.

\begin{mythm}\label{justification}
	Under the assumption and using the notation of Proposition \ref{consistency}, and provided  $(\mathrm{G}_1,\mathrm{G}_2)$ are admissible Fourier multipliers, then for any $U = (\zeta,\nabla\psi) \in C^0([0,\widetilde T/\epsilon],H^{s+4}(\mathbb{R}^d))$ classical solution of the water waves equations and satisfying the non-cavitation assumption \eqref{non cavitation}, there exists a unique $U_{\mathrm{WB}} = (\zeta_{\mathrm{WB}},v_{\mathrm{WB}}) \in C^0([0,T/\epsilon],X^{s+4}(\mathbb{R}^d))$ classical solution of the Whitham-Boussinesq systems \eqref{WB matricial form introduction} with initial data $U_{\mathrm{WB}}|_{t=0} = (\zeta|_{t=0},\nabla\psi|_{t=0})$, and one has for all times $t \in [0,\min{(\widetilde T,T)}/\epsilon]$,
   \begin{align*}
       |U - U_{\mathrm{WB}}|_{L^{\infty}([0,t],X^{s})} \leq C\, \mu\epsilon t ,
    \end{align*}
    with $T$ (provided by Theorem \ref{Thm wanted}) and  $C=C(\tfrac{1}{h_{\min}},|U|_{L^{\infty}([0,T/\epsilon], H^{s+4})})$ uniform with respect to $(\mu,\epsilon)\in (0,1]^2$.
\end{mythm}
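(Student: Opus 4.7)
The strategy is to assemble the three ingredients already at our disposal: view the water waves solution $U=(\zeta,\nabla\psi)$ as an approximate solution of the Whitham--Boussinesq system via Proposition~\ref{consistency}, construct the exact solution $U_{\mathrm{WB}}$ via Theorem~\ref{Thm wanted}, and control the difference through the stability estimate of Proposition~\ref{Stability}.

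Before applying these results, I would first verify that the admissibility bounds for the pair $(\mathrm{G}_1^\mu,\mathrm{G}_2^\mu)$ with $G_k^\mu(\xi)=G_k(\sqrt\mu\,\xi)$ hold uniformly in $\mu\in(0,1]$: plainly $|G_k^\mu|_{L^\infty}=|G_k|_{L^\infty}$, and since $\nabla G_k^\mu(\xi)=\sqrt\mu\,\nabla G_k(\sqrt\mu\,\xi)$, the substitution $\eta=\sqrt\mu\,\xi$ combined with $\sqrt\mu\,\langle\xi\rangle\leq\langle\eta\rangle$ for $\mu\in(0,1]$ gives $|\langle\cdot\rangle\nabla G_k^\mu|_{L^\infty}\leq|\langle\cdot\rangle\nabla G_k|_{L^\infty}$. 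By Remark~\ref{rem}, all constants and existence times produced in what follows are then uniform in $\mu\in(0,1]$.

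The initial data $U_0\colonequals(\zeta|_{t=0},\nabla\psi|_{t=0})\in H^{s+4}$ lies in $X^{s+4}$ with $|U_0|_{X^{s+4}}\leq C|U|_{L^\infty([0,\widetilde T/\epsilon],H^{s+4})}$ (using $G_1^\mu\in L^\infty$) and satisfies \eqref{non cavitation} by hypothesis. Theorem~\ref{Thm wanted} applied at regularity $s+4$ provides $T>0$ and a unique $U_{\mathrm{WB}}\in C^0([0,T/\epsilon],X^{s+4})$ with the prescribed initial data and $|U_{\mathrm{WB}}|_{L^\infty([0,T/\epsilon],X^{s+4})}\leq C|U_0|_{X^{s+4}}$, all uniformly in $(\mu,\epsilon)\in(0,1]^2$. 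Meanwhile Proposition~\ref{consistency} identifies the water waves solution itself as an approximate solution of the same quasi-linear system, with remainder $\mu\epsilon R$ satisfying $|R|_{L^\infty([0,\widetilde T/\epsilon],H^s)}\leq C$, hence $|\mu\epsilon R|_{L^\infty([0,\widetilde T/\epsilon],X^s)}\leq C\mu\epsilon$, again thanks to $G_1^\mu\in L^\infty$.

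The proof then concludes by applying Proposition~\ref{Stability} with $\widetilde U=U$ and $\widetilde R=\mu\epsilon R$, at the regularity index $s+1$ (shifted by one from the desired error estimate): both solutions lie in $X^{s+1}$ (since they lie in $X^{s+4}$), the remainder lies in the space $X^s$ required at this shifted level, and the initial error $\mathfrak{e}|_{t=0}$ vanishes by construction. The stability estimate then yields, for $t\in[0,\min(\widetilde T,T)/\epsilon]$,
\[
|U-U_{\mathrm{WB}}|_{L^\infty([0,t],X^s)}\leq C\,t\,|\mu\epsilon R|_{L^\infty([0,t],X^s)}\leq C\,\mu\epsilon\,t,
\]
which is the claimed estimate. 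No step is individually difficult; the only point to handle with care is the regularity bookkeeping: Proposition~\ref{consistency} loses four derivatives from $U\in H^{s+4}$ to produce a remainder in $H^s$, while Proposition~\ref{Stability} must be applied one regularity level above the target error norm, and this combination is precisely what dictates the choice $H^{s+4}$ for the ambient regularity of the water waves solution in the hypotheses.
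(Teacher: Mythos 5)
Your proposal is correct and follows essentially the same route as the paper: uniformity in $\mu$ via the scaling $G_k(\sqrt\mu\,\xi)$ and Remark \ref{rem}, construction of $U_{\mathrm{WB}}$ by Theorem \ref{Thm wanted} at regularity $s+4$ using the embedding $H^{s'}\subset X^{s'}$, the consistency remainder $\mu\epsilon R$ from Proposition \ref{consistency}, and the stability estimate of Proposition \ref{Stability} applied at index $s+1$ with vanishing initial error. Your explicit verification that $|\langle\cdot\rangle\nabla G_k^\mu|_{L^\infty}\leq|\langle\cdot\rangle\nabla G_k|_{L^\infty}$ is slightly more detailed than the paper's one-line remark, but the argument is the same.
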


\begin{myrem}
    Regular solutions of the water waves equations as in Proposition \ref{consistency} and Theorem \ref{justification} are provided by Theorem 4.16 in \cite{WWP}.
\end{myrem}

\subsection{Outline}
Section \ref{Energy estimates section} is dedicated to the proof of Proposition \ref{energy estimates introduction}. In Subsection \ref{Symmetrization of the systems section} we focus on the symmetrization of the systems \eqref{general Whitham-Boussinesq equation introduction}. In Subsection \ref{Energy estimates of order 0 section}, we prove the energy estimates of Proposition \ref{energy estimates introduction} in the case of $s=0$. Finally in Subsection \ref{Energy estimates of higher order section}, we prove the general case $s \geq 0$.

Section \ref{Local well-posedness and stability section} is dedicated to the proof of Theorem \ref{Thm wanted}. In Subsection \ref{Well-posedness of the linearized systems section} we prove a local well-posedness result for the systems \eqref{general Whitham-Boussinesq equation introduction} linearized around a sufficiently regular state. In Subsection \ref{Well-posedness of the systems section} we focus on the proof of Theorem \ref{Thm wanted}. In Subsection \ref{Blow up criterion section} we establish a blow-up criterion for the local well-posedness of the systems \eqref{general Whitham-Boussinesq equation introduction}.

In Section \ref{Stability result section} we prove Proposition \ref{Stability}.

In Section \ref{Full justification of a class of Whitham-Boussinesq systems section} we prove Theorem \ref{justification}.

Appendix \ref{annex} collects useful technical results.

\section{Energy estimates}
\label{Energy estimates section}
\subsection{Symmetrization}
\label{Symmetrization of the systems section}
In this subsection we focus on the symmetrization of the systems \eqref{WB matricial form introduction}. We perform the computations in the setting $d=2$, the case $d=1$ is obtained in the same way.

\begin{myppr}\label{symmetrization of the WB system}
    Let $s \geq 0$ and $t_0 > d/2$. For any $U \in X^{t_0+1}(\mathbb{R}^d)$, let
    \begin{align}\label{symmetrizer}
        S_0(U)[\circ] \colonequals \begin{pmatrix} 1 & 0 \\ 0 & (\mathrm{G}_1)^2[\circ] + \epsilon\mathrm{G}_2[\zeta \mathrm{G}_2[\circ]] \end{pmatrix},
    \end{align}
    be an operator defined in $X^0(\mathbb{R}^d)$. Applying the latter operator to system \eqref{WB matricial form introduction} we get
    \begin{align}\label{symmetrized Wb equation general form}
        S_0(U)[\partial_t U] + \sum\limits_{j=1}^d \widetilde{A}_j(U)[\partial_j U] = \epsilon \sum\limits_{j=1}^d F_j(U)[\partial_j U],
    \end{align}
    where for $j = 1,2 $, $\widetilde{A}_j(U)$ is a symmetric operator defined by ( denoting by * the adjoint in $L^2(\mathbb{R}^d)$)
    \begin{align*}
        \widetilde{A}_j(U) = \frac{B_j(U)+B_j(U)^*}{2},
    \end{align*}
    with 
    \begin{align*}
        B_1(U)[\circ] = \begin{pmatrix} B_{1}^{1,1}(U)[\circ] & B_1^{1,2}(U)[\circ] & 0 \\ B_1^{1,2}(U)[\circ] & B_1^{2,2}(U)[\circ] & 0 \\ 0 & 0 & B_1^{2,2}(U)[\circ] \end{pmatrix},
    \end{align*}
    where 
    \begin{align*}
        \begin{cases}
            B_1^{1,1}(U)[\circ] = \epsilon \mathrm{G}_2 [ \mathrm{G}_2[v_1]\circ], \\
            B_1^{1,2}(U)[\circ] = (\mathrm{G}_1)^2[\circ] + \epsilon \mathrm{G}_2[\zeta \mathrm{G}_2[\circ]], \\
            B_1^{2,2}(U)[\circ] = \epsilon(\mathrm{G}_1)^2 [\mathrm{G}_2[v_1] \mathrm{G}_2[\circ]] + \epsilon^2 \mathrm{G}_2[\zeta\mathrm{G}_2[\mathrm{G}_2[v_1]\mathrm{G}_2[\circ]]],
        \end{cases}
    \end{align*}
    and 
    \begin{align*}
        B_2(U)[\circ] = \begin{pmatrix}  B_2^{1,1}(U)[\circ] & 0 & B_2^{1,3}(U)[\circ] \\ 0 & B_2^{2,2}(U)[\circ] & 0 \\ B_2^{1,3}(U)[\circ] & 0 & B_2^{2,2}(U)[\circ] \end{pmatrix},
    \end{align*}
    where 
    \begin{align*}
        \begin{cases}
            B_2^{1,1}(U)[\circ] = \epsilon \mathrm{G}_2 [\mathrm{G}_2[v_2]\circ], \\
            B_2^{1,3}(U)[\circ] = (\mathrm{G}_1)^2[\circ] + \epsilon \mathrm{G}_2[\zeta \mathrm{G}_2[\circ]], \\
            B_2^{2,2}(U)[\circ] = \epsilon(\mathrm{G}_1)^2 [\mathrm{G}_2[v_2] \mathrm{G}_2[\circ]] + \epsilon^2 \mathrm{G}_2[\zeta\mathrm{G}_2[\mathrm{G}_2[v_2]\mathrm{G}_2[\circ]]].
        \end{cases}
    \end{align*}
Finally, for $j = 1, 2$,
    \begin{align*}
        F_j(U)[\circ] = -\frac{B_j(U)[\circ]-B_j(U)^*[\circ]}{2\epsilon}.
    \end{align*}
\end{myppr}
\begin{proof}
Applying the matricial operator $S_0(U)[\circ] $ to
the system \eqref{WB matricial form introduction}, we immediately get
\begin{align*}
    S_0(U)[\partial_t U] + \sum\limits_{j=1}^d B_j(U)[\partial_j U] = 0.
\end{align*}
Then we write $ B_j(U)= \frac{B_j(U)+B_j(U)^*}{2}+ \frac{B_j(U)-B_j(U)^*}{2}=\widetilde{A}_j(U)-\epsilon F_j(U)$.
\end{proof}

\begin{mypp}\label{estimates remainder}
    Let $s \geq 0$ and $t_0 > d/2$. For any $\underline{U} \in X^{\max{(t_0+1,s)}}(\mathbb{R}^d)$ and any $U \in X^s(\mathbb{R}^d)$, for $j=1,2$, we have
    \begin{align}\label{estimates on F}
        |F_j(\underline{U})[\partial_j U]|_{Y^s} \leq C(|\underline{U}|_{X^{\max(t_0+1,s)}}) |U|_{X^s}.
    \end{align}
    It makes it a term of order $0$ with respect to the energy norm.
\end{mypp}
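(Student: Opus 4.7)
The plan is to analyze $F_j(\underline U) = -(B_j(\underline U) - B_j(\underline U)^*)/(2\epsilon)$ entry-by-entry, rewriting each non-zero entry as a sum of commutators between Fourier multipliers and multiplication operators, then invoking the commutator estimates of the appendix, which exploit the admissibility $\langle \cdot\rangle \nabla G_k \in L^\infty$ to give those commutators an effective order $-1$. The structural input is that Fourier multipliers pairwise commute, so in particular $[(\mathrm{G}_1)^2, \mathrm{G}_2] = 0$, and that the off-diagonal entries $(B_j)^{1,2}$ are self-adjoint (they are sums of $(\mathrm{G}_1)^2$ and $\mathrm{G}_2 M_{\underline\zeta} \mathrm{G}_2$), so the off-diagonal entries of $F_j$ vanish. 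Writing $M_f$ for multiplication by $f$, the diagonal entries then rearrange, for instance, as
\begin{align*}
	F_j^{1,1}(\underline U) &= -\tfrac{1}{2}[\mathrm{G}_2, M_{\mathrm{G}_2[\underline v_j]}], \\
	F_j^{2,2}(\underline U) &= -\tfrac{1}{2}\bigl((\mathrm{G}_1)^2 [M_{\mathrm{G}_2[\underline v_j]}, \mathrm{G}_2] + \mathrm{G}_2 [(\mathrm{G}_1)^2, M_{\mathrm{G}_2[\underline v_j]}]\bigr) + \mathcal{R}_\epsilon,
\end{align*}
where $\mathcal{R}_\epsilon$ is an $O(\epsilon)$ correction of analogous commutator type additionally involving $M_{\underline\zeta}$, coming from the $\epsilon^2$-term in $B_j^{2,2}$.

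For the $\zeta$-component of $F_j(\underline U)[\partial_j U]$ one is then reduced to bounding sums of terms of the form $[\mathrm{G}_2, M_f]\partial_j g$, with $f$ a smooth function of the components of $\underline U$ and $\mathrm{G}_2[\underline v]$ and $g$ a component of $U$. An appendix commutator estimate of the form $|[\mathrm{G}_k, M_f]\partial_j g|_{H^s} \leq C(|f|_{H^{\max(s, t_0+1)}})|g|_{H^s}$, combined with Moser-type product estimates, yields the bound after using $|\mathrm{G}_2[\underline v]|_{H^r} \leq |\mathrm{G}_1[\underline v]|_{H^r}$ (a consequence of $|G_2| \leq G_1$) to control $|f|$ by $C(|\underline U|_{X^{\max(t_0+1,s)}})$ and $|g|_{H^s} \leq |U|_{X^s}$.

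The main obstacle lies in the $v$-component, where one must bound $|\mathrm{G}_1^{-1} F_j^{2,2}(\underline U)[\partial_j v_k]|_{H^s}$ while $U$ is only controlled through $|\mathrm{G}_1[v]|_{H^s}$ --- a problem because, when $G_1(\xi) \to 0$ at infinity, $|v_k|_{H^s}$ is not directly controlled. The prefactors $(\mathrm{G}_1)^2$ and $\mathrm{G}_2$ appearing in the rewriting of $F_j^{2,2}$ are precisely what is needed here: multiplied on the left by $\mathrm{G}_1^{-1}$ they produce $\mathrm{G}_1$ and $\mathrm{G}_2/\mathrm{G}_1$, both of which are $L^\infty$ Fourier multipliers (the latter thanks to $|G_2|\leq G_1$). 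Combined with the algebraic identity $[A, M_f]\partial_j h = \partial_j [A, M_f]h - [A, M_{\partial_j f}]h$, which moves the $\partial_j$-derivative off $v_k$ onto $f$, and with a refined version of the appendix commutator estimate adapted to the $X^s$-norm on the velocity component (yielding a bound in terms of $|\mathrm{G}_1[v_k]|_{H^s}$ rather than of the uncontrolled $|v_k|_{H^s}$), one obtains the claimed estimate. I expect the technical heart of the proof to lie in this refined commutator analysis, relying on the symbol decay $\nabla G_k(\xi) = O(\langle \xi\rangle^{-1})$ together with the pointwise bound $|G_2|\leq G_1$, and to extend routinely to the $\mathcal{R}_\epsilon$ correction since the extra factors of $\epsilon\leq 1$ are absorbed in the constant.
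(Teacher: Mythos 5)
Your overall strategy --- noting that the off-diagonal entries of $F_j$ vanish because $B_j^{1,2}$ is self-adjoint, reducing the diagonal entries to commutators between Fourier multipliers and multiplication operators, and closing with the appendix commutator estimates together with $|G_2|\leq G_1$ --- is exactly the paper's, and you correctly identify the crux: the velocity is only controlled through $|\mathrm{G}_1[v]|_{H^s}$. But your execution of that crux has a gap. In your splitting $(\mathrm{G}_1)^2 M_f\mathrm{G}_2 - \mathrm{G}_2 M_f(\mathrm{G}_1)^2 = (\mathrm{G}_1)^2[M_f,\mathrm{G}_2]+\mathrm{G}_2[(\mathrm{G}_1)^2,M_f]$ (with $M_f$ denoting multiplication by $f=\mathrm{G}_2[\underline{v_j}]$, as in your proposal), both commutators act on the bare $\partial_j v_k$, so Proposition \ref{Commutator estimates} returns a bound by $|f|_{H^{\max(t_0+1,s)}}|\partial_j v_k|_{H^{s-1}}\leq |f|_{H^{\max(t_0+1,s)}}|v_k|_{H^{s}}$, and $|v_k|_{H^s}$ is not controlled by $|U|_{X^s}$ when $G_1$ vanishes at infinity. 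The identity $[A,M_f]\partial_j h=\partial_j[A,M_f]h-[A,M_{\partial_j f}]h$ does not repair this: after moving the derivative you still face the $H^s$ norm of the bare $v_k$ on the right-hand side. You then appeal to a ``refined commutator estimate'' yielding a bound in terms of $|\mathrm{G}_1[v_k]|_{H^s}$; this is precisely the missing ingredient, it is not among the appendix tools, and it does not follow from the admissibility conditions alone --- establishing it would be a substantial task in itself.

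The paper avoids needing any such refinement by arranging the algebra differently: after applying $(\mathrm{G}_1)^{-1}$ it decomposes the antisymmetric part as $I_1+I_2$ with $I_1=[\mathrm{G}_1,M_f]\mathrm{G}_2[\partial_j v]-[\mathrm{G}_2,M_f]\mathrm{G}_1[\partial_j v]$ and $I_2=(\mathrm{G}_1)^{-1}\mathrm{G}_2\big[[\mathrm{G}_1,M_f]\mathrm{G}_1[\partial_j v]\big]$, so that every commutator acts on $\mathrm{G}_1[\partial_j v]$ or $\mathrm{G}_2[\partial_j v]$, whose $H^{s-1}$ norms are bounded by $|U|_{X^s}$ (using $|G_2|\leq G_1$ and Proposition \ref{trivial}), while the leftover prefactor $(\mathrm{G}_1)^{-1}\mathrm{G}_2$ is an $L^\infty$ multiplier for the same reason. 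The $\epsilon^2$ correction is treated by the same rearrangement combined with the product estimates. If you redo your diagonal-entry decomposition so that a factor of $\mathrm{G}_1$ or $\mathrm{G}_2$ always lands on $\partial_j v$ before the commutator is applied, your argument closes using only the stated appendix results.
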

\begin{proof}
First remark that $(B_1^{1,2})^* = B_1^{1,2}$ and $(B_2^{1,3})^* = B_2^{1,3}$. So that we need to estimate the terms of $F_j(\underline{U})$, $j=1,2$, defined by $B_1^{1,1}$, $B_1^{2,2}$, $B_2^{1,1}$ and $B_2^{2,2}$. We shall only estimate the contributions from $B_2^{2,2}$, the other terms being similar.

 For the first contribution, we have
\begin{multline*}
    |(\mathrm{G}_1)^{-1}[(\mathrm{G}_1)^2 [\mathrm{G}_2[\underline{v_1}] \mathrm{G}_2[\partial_1 v_2]] - \mathrm{G}_2 [\mathrm{G}_2[\underline{v_1}] (\mathrm{G}_1)^2[\partial_1 v_2]]]|_{H^s} \\ \leq |\mathrm{G}_1[\mathrm{G}_2[\underline{v_1}]\mathrm{G}_2[\partial_1 v_2]] - \mathrm{G}_2[\mathrm{G}_2[\underline{v_1}]\mathrm{G}_1[\partial_1 v_2]]|_{H^s}
    + |(\mathrm{G}_1)^{-1}\mathrm{G}_2[[\mathrm{G}_1,\mathrm{G}_2[\underline{v_1}]]\mathrm{G}_1[\partial_1 v_2]]|_{H^s}  \colonequals I_1 + I_2.
\end{multline*}
We decompose
\begin{align*}
    I_1 = |[\mathrm{G}_1,\mathrm{G}_2[\underline{v_1}]]\mathrm{G}_2[\partial_1 v_2] - [\mathrm{G}_2,\mathrm{G}_2[\underline{v_1}]]\mathrm{G}_1[\partial_1 v_2]|_{H^s},
\end{align*}
so that using the commutator estimates of Proposition \ref{Commutator estimates} and the assumption $|G_2| \leq G_1\in L^\infty(\mathbb{R}^d)$ we get
\begin{align*}
    I_1 \lesssim |\mathrm{G}_2[\underline{v_1}]|_{H^{\max{(t_0+1,s)}}}|\mathrm{G}_1[\partial_1 v_2]|_{H^{s-1}} \lesssim |\underline{U}|_{X^{\max{(t_0+1,s)}}}|U|_{X^s}.
\end{align*}
Using the same tools, we get 
\begin{align*}
    I_2 \lesssim |\underline{U}|_{X^{\max{(t_0+1,s)}}}|U|_{X^s}.
\end{align*}

For the second contribution, using the same tools in addition to the product estimates of Proposition \ref{product estimate}, we have
\begin{align*}
    |(\mathrm{G}_1)^{-1}[\mathrm{G}_2[\underline{\zeta}\mathrm{G}_2[\mathrm{G}_2[\underline{v_1}]\mathrm{G_2}[\partial_1 v_2]]] &- \mathrm{G}_2[\mathrm{G}_2[\underline{v_1}]\mathrm{G}_2[\underline{\zeta}\mathrm{G}_2[\partial_1 v_2]]]]|_{H^s} \\
    \leq &|\underline{\zeta} \mathrm{G}_2[\mathrm{G}_2[\underline{v_1}]\mathrm{G_2}[\partial_1 v_2]] - \mathrm{G}_2[\underline{v_1}]\mathrm{G}_2[\underline{\zeta}\mathrm{G}_2[\partial_1 v_2]]|_{H^s} \\
    \leq &|-[\mathrm{G}_2,\underline{\zeta}](\mathrm{G}_2[\underline{v_1}]\mathrm{G}_2[\partial_1 v_2]) + [\mathrm{G}_2,\mathrm{G}_2[\underline{v_1}]](\underline{\zeta}\mathrm{G}_2[\partial_{1}v_2])|_{H^s} \\
    \leq &|\underline{\zeta}|_{H^{\max{(t_0+1,s)}}}|\mathrm{G}_2[\underline{v_1}]\mathrm{G}_2[\partial_1 v_2]|_{H^{s-1}} + |\mathrm{G}_2[\underline{v_1}]|_{H^{\max{(t_0+1,s)}}}|\underline{\zeta}\mathrm{G}_2[\partial_1 v_2]|_{H^{s-1}} \\
    \leq &|\underline{U}|_{X^{\max{(t_0+1,s)}}}^2|U|_{X^s}.
\end{align*}
As aforementioned, all other terms are estimated in the same way.
\end{proof}

\subsection{Energy estimates of order 0}
\label{Energy estimates of order 0 section}
In this subsection we prove the energy estimates of order 0 on the systems \eqref{WB matricial form introduction} linearized around a sufficiently regular state, specifically Proposition \ref{energy estimates introduction} with $s=0$.

\begin{mypp}\label{Energy estimates of order 0 proposition section}
	Let $t_0 > d/2$ and $(\mathrm{G}_1,\mathrm{G}_2)$ be admissible Fourier multipliers. For any $\epsilon\in(0,1]$, $T>0$, $\underline{U} = \big( \underline{\zeta} , \underline{v} \big) \in W^{1,\infty}([0,T/\epsilon],X^{t_0}(\mathbb{R}^d))\cap L^{\infty}([0,T/\epsilon],X^{t_0+1}(\mathbb{R}^d))$ for which there exists $h_{\min} > 0$ such that for all $(t,x) \in [0,T/\epsilon]\times \mathbb{R}^d$, \eqref{non cavitation} holds;
	and for any $U = \big( \zeta , v \big) \in W^{1,\infty}([0,T/\epsilon],X^{0}(\mathbb{R}^d))\cap L^{\infty}([0,T/\epsilon],X^{1}(\mathbb{R}^d))$ satisfying the system
	\begin{align*}
	\partial_t U + \sum\limits_{j=1}^d A_j(\underline{U})[\partial_j U] = \epsilon R,
	\end{align*}
	where $R \in L^{\infty}([0,T/\epsilon],X^0(\mathbb{R}^d))$, and for $j = 1,2$, $A_j(\underline{U})$ is defined by \eqref{A_j introduction}, we have for any $t\in [0,T/\epsilon]$,
\[
	|U|_{X^0} \leq \kappa_0 e^{\epsilon \lambda_0 t} |U|_{X^0}|_{t=0} + \epsilon \nu_0 \int_{0}^{t} |R(t')|_{X^0} \rm{d}t',
\]
	where $\lambda_0, \nu_0 := C(\frac{1}{h_{\min}},T,|\underline{U}|_{W^{1,\infty}_t X^{t_0}},|\underline{U}|_{L^{\infty}_t X^{t_0+1}})$ and $\kappa_0 \colonequals C(\frac{1}{h_{\min}},|\underline{U}|_{X^{t_0}}|_{t=0})$.
\end{mypp}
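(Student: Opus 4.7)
The plan is to run the standard energy method for symmetrizable hyperbolic systems, built around the symmetrizer $S_0(\underline{U})$ of Property \ref{symmetrization of the WB system}. I would work with the energy
\[
E(t) \colonequals (S_0(\underline{U}) U, U)_2 = |\zeta|_2^2 + |\mathrm{G}_1[v]|_2^2 + \epsilon\int_{\mathbb{R}^d}\underline{\zeta}\,|\mathrm{G}_2[v]|^2\,\mathrm{d}x,
\]
and first establish its coercivity $E \approx |U|_{X^0}^2$. The upper bound $E \leq C(|\underline{U}|_{X^{t_0}})|U|_{X^0}^2$ is immediate from the Sobolev embedding $H^{t_0}\hookrightarrow L^\infty$ (recall $t_0>d/2$) and the pointwise inequality $|G_2|\leq G_1$. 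For the lower bound I would decompose
\[
|\mathrm{G}_1[v]|_2^2 + \epsilon\!\int\!\underline{\zeta}\,|\mathrm{G}_2[v]|^2\,\mathrm{d}x = \bigl(|\mathrm{G}_1[v]|_2^2 - |\mathrm{G}_2[v]|_2^2\bigr) + \!\int(1+\epsilon\underline{\zeta})\,|\mathrm{G}_2[v]|^2\,\mathrm{d}x,
\]
both summands being non-negative thanks to $|G_2|\leq G_1$ and the non-cavitation condition \eqref{non cavitation}, and conclude $E \geq c\,h_{\min}|U|_{X^0}^2$ by further invoking $|\mathrm{G}_2[v]|_2\leq|\mathrm{G}_1[v]|_2$.

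Next, I would apply $S_0(\underline{U})$ to the linearized equation to obtain the symmetrized form
\[
S_0(\underline{U})\partial_t U + \sum_{j=1}^d \widetilde A_j(\underline{U})\partial_j U = \epsilon\sum_{j=1}^d F_j(\underline{U})\partial_j U + \epsilon\, S_0(\underline{U}) R,
\]
as in Property \ref{symmetrization of the WB system}. Differentiating $E$ in time and exploiting the symmetry of $S_0(\underline{U})$ and $\widetilde A_j(\underline{U})$, integration by parts cancels the principal contribution and leaves
\[
\tfrac{\mathrm{d}}{\mathrm{d}t}E = \bigl((\partial_t S_0(\underline{U}))U, U\bigr)_2 + \sum_j \bigl([\partial_j,\widetilde A_j(\underline{U})]U, U\bigr)_2 + 2\epsilon\sum_j(F_j(\underline{U})\partial_j U, U)_2 + 2\epsilon(S_0(\underline{U}) R, U)_2.
\]
A block-by-block inspection of Property \ref{symmetrization of the WB system} shows that each summand above carries a factor $\epsilon$: the only coefficient-free component of $\widetilde A_j$ is the Fourier multiplier $\mathrm{G}_1^2$, which commutes exactly with $\partial_j$, while the non-trivial block of $S_0$ and every remaining block of $B_j$ is prefactored by $\epsilon$ (or $\epsilon^2$). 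The $L^\infty$ bounds on the resulting variable coefficients (including $\mathrm{G}_2[\partial_j\underline v_k]$ and $\partial_t\underline\zeta$) follow from the assumptions $\underline U\in W^{1,\infty}_t X^{t_0}\cap L^\infty_t X^{t_0+1}$, Sobolev embedding, and the Fourier-side inequality $|\mathrm{G}_2[w]|_{H^{\sigma}}\leq |\mathrm{G}_1[w]|_{H^{\sigma}}$. The two terms involving $F_j$ and $R$ are handled by $X^0$--$Y^0$ duality, $|(W,U)_2|\leq |W|_{Y^0}|U|_{X^0}$, combined with Proposition \ref{estimates remainder} and a short direct check that $|S_0(\underline{U}) R|_{Y^0}\leq C(|\underline{U}|_{X^{t_0}})|R|_{X^0}$.

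Putting everything together yields $\tfrac{\mathrm{d}}{\mathrm{d}t}E \leq \epsilon\lambda\, E + \epsilon\nu\,|R|_{X^0}\sqrt{E}$ with constants of the advertised form. Dividing by $\sqrt E$ (handling its zero set by a standard approximation if needed), applying Gronwall's inequality on $[0,T/\epsilon]$, absorbing the uniformly bounded factor $e^{\epsilon\lambda(t-s)/2}\leq e^{\lambda T/2}$ into the source constant, and converting back via the coercivity $|U|_{X^0}^2\lesssim h_{\min}^{-1}E$ together with the initial-time upper bound $E(0)\leq C(|\underline U(0)|_{X^{t_0}})|U(0)|_{X^0}^2$, I obtain the claimed inequality with the prescribed dependencies of $\kappa_0$, $\lambda_0$, $\nu_0$. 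The main obstacle will be verifying that each summand in the energy identity genuinely carries an $\epsilon$ factor, which is exactly what forces the long time-scale $T/\epsilon$; this reduces to the block inspection just mentioned. A secondary subtlety is the lower bound for $S_0(\underline U)$ given only the pointwise inequality $|G_2|\leq G_1$: one cannot absorb $\epsilon\underline\zeta\,|\mathrm{G}_2[v]|^2$ directly into $|\mathrm{G}_1[v]|^2$, but the decomposition above circumvents this by first isolating the non-negative contribution $(|\mathrm{G}_1[v]|_2^2 - |\mathrm{G}_2[v]|_2^2)$.
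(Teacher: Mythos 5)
Your proposal is correct and follows essentially the same route as the paper: the same symmetrizer $S_0(\underline{U})$, the same coercivity argument (your decomposition isolating $|\mathrm{G}_1[v]|_2^2-|\mathrm{G}_2[v]|_2^2$ and $\int(1+\epsilon\underline\zeta)|\mathrm{G}_2[v]|^2$ is just an algebraic rearrangement of the paper's splitting via $(\mathrm{G}_1)^2-(1-h_{\min})(\mathrm{G}_2)^2$), the same energy identity with every surviving term carrying an explicit $\epsilon$, the same $X^0$--$Y^0$ duality and remainder estimates, and the same division by $\sqrt{E}$ followed by Gronwall. No gaps.
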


To prove this result we need some properties on the symmetrizer $S_0(\underline{U})$.
\begin{mylem}\label{coercivity symmetrizer}
    Let $t_0 > d/2$. Let $\underline{U} \in X^{t_0}(\mathbb{R}^d)$ be such that \eqref{non cavitation} is satisfied. The symmetrizer $S_0(\underline{U})$ satisfies for any $U \in X^0(\mathbb{R}^d)$
    \begin{align}\label{coercivity inequality}
        (S_0(\underline{U})U,U)_2 \geq |\zeta|^2_2 + h_{\min}|\mathrm{G}_1[v]|_2^2 \geq h_{\min} |U|_{X^0}^2.
    \end{align}
    and 
    \begin{align}\label{other estimates on the symmetrizer}
        \begin{cases}
            |S_0(\underline{U})[U]|_{Y^0} \leq C(|\underline{U}|_{X^{t_0}})|U|_{X^0}, \\ 
            (S_0(\underline{U})[U],U)_2 \leq C(|\underline{U}|_{X^{t_0}}) |U|_{X^0}^2.
        \end{cases}
    \end{align}
\end{mylem}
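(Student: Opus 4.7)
The plan is to compute $(S_0(\underline{U})U, U)_2$ explicitly, then read off all three inequalities from the resulting expression. Writing $U = (\zeta, v)$ and using that the symbols $G_1, G_2$ yield self-adjoint operators on $L^2$, a direct expansion gives
\[
(S_0(\underline{U})U, U)_2 = |\zeta|_2^2 + |\mathrm{G}_1[v]|_2^2 + \epsilon \int_{\mathbb{R}^d} \underline{\zeta}(x)\,|\mathrm{G}_2[v](x)|^2\,dx,
\]
via the identities $((\mathrm{G}_1)^2 v, v)_2 = |\mathrm{G}_1[v]|_2^2$ and $(\mathrm{G}_2[\underline{\zeta}\,\mathrm{G}_2[v]], v)_2 = (\underline{\zeta}\,\mathrm{G}_2[v], \mathrm{G}_2[v])_2$. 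This identity is the common starting point for all three estimates.

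For the coercivity \eqref{coercivity inequality}, the main step is the algebraic decomposition
\[
\epsilon \underline{\zeta}\,|\mathrm{G}_2[v]|^2 = (1 + \epsilon\underline{\zeta})\,|\mathrm{G}_2[v]|^2 - |\mathrm{G}_2[v]|^2,
\]
which converts the sign-indefinite weight $\epsilon\underline{\zeta}$ into the nonnegative quantity $1 + \epsilon\underline{\zeta}$ controlled from below by the non-cavitation hypothesis \eqref{non cavitation}. Integrating produces a lower bound $(h_{\min}-1)\,|\mathrm{G}_2[v]|_2^2$ for the $\epsilon$-term. Combining with $|\mathrm{G}_1[v]|_2^2$ and applying the admissibility inequality $|G_2(\xi)| \leq G_1(\xi)$ at the Plancherel level, namely $|\mathrm{G}_2[v]|_2 \leq |\mathrm{G}_1[v]|_2$, produces exactly $h_{\min}|\mathrm{G}_1[v]|_2^2$ for the second component; and the $|\zeta|_2^2$ term already exceeds $h_{\min}|\zeta|_2^2$ (assuming $h_{\min}\le 1$, which may be taken without loss of generality).

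For the upper bounds \eqref{other estimates on the symmetrizer}, the second inequality follows immediately from the same starting identity by bounding the integral with $|\underline{\zeta}|_{L^\infty}\,|\mathrm{G}_2[v]|_2^2 \leq |\underline{\zeta}|_{L^\infty}\,|\mathrm{G}_1[v]|_2^2$, then invoking the Sobolev embedding $H^{t_0}(\mathbb{R}^d) \hookrightarrow L^\infty(\mathbb{R}^d)$ (valid since $t_0 > d/2$) to replace $|\underline{\zeta}|_{L^\infty}$ by $C(|\underline{U}|_{X^{t_0}})$. For the $Y^0$ estimate one computes
\[
\mathrm{G}_1^{-1}\bigl[(\mathrm{G}_1)^2 v + \epsilon\,\mathrm{G}_2[\underline{\zeta}\,\mathrm{G}_2[v]]\bigr] = \mathrm{G}_1[v] + \epsilon\,\mathrm{G}_1^{-1}\mathrm{G}_2[\underline{\zeta}\,\mathrm{G}_2[v]],
\]
and uses $\|\mathrm{G}_1^{-1}\mathrm{G}_2\|_{L^2\to L^2} \leq 1$ (again by admissibility) followed by H\"older and Sobolev as above.

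The only step requiring any care is the coercivity: a naive pointwise comparison $|\mathrm{G}_2[v](x)| \leq |\mathrm{G}_1[v](x)|$ fails in general, so the domination of $\mathrm{G}_2$ by $\mathrm{G}_1$ must be used at the $L^2$ level via Plancherel. This forces the add-and-subtract trick above, and is also the reason $h_{\min}$ ends up multiplying $|\mathrm{G}_1[v]|_2^2$ rather than $|\mathrm{G}_2[v]|_2^2$ in \eqref{coercivity inequality}.
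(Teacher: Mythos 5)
Your proposal is correct and follows essentially the same route as the paper: both start from the identity $(S_0(\underline{U})U,U)_2=|\zeta|_2^2+|\mathrm{G}_1[v]|_2^2+\epsilon(\underline{\zeta}\,\mathrm{G}_2[v],\mathrm{G}_2[v])_2$, perform the same add-and-subtract of $(1-h_{\min})|\mathrm{G}_2[v]|_2^2$ (the paper does it at the symbol level, you do it pointwise under the integral before applying non-cavitation), and then use $|G_2|\leq G_1$ via Plancherel together with $1-h_{\min}\geq 0$ to absorb the negative term into $|\mathrm{G}_1[v]|_2^2$. The upper bounds \eqref{other estimates on the symmetrizer} are handled exactly as the paper indicates (Sobolev embedding $H^{t_0}\hookrightarrow L^\infty$ and $\|\mathrm{G}_1^{-1}\mathrm{G}_2\|_{L^2\to L^2}\leq 1$), so no further comment is needed.
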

\begin{proof}
    Given the definition of $S_0(\underline{U})$ (see \eqref{symmetrizer}), the assumption $G_2 \leq G_1$ and the Sobolev embedding $H^{t_0} \subset L^{\infty}$, the estimates of \eqref{other estimates on the symmetrizer} are obvious. We prove here the inequality \eqref{coercivity inequality}.
    \begin{align*}
        (S_0(\underline{U})U,U)_2 &= |\zeta|_2^2 + ((\mathrm{G}_1)^2[v] + \mathrm{G}_2[\epsilon\underline{\zeta}\mathrm{G}_2[v]],v)_2\\
        & = |\zeta|^2_2 + (((\mathrm{G}_1)^2-(1-h_{\min})(\mathrm{G}_2)^2)[v], v)_2 + ((1-h_{\min})(\mathrm{G}_2)^2[v] + \mathrm{G}_2[\epsilon\underline{\zeta}\mathrm{G}_2[v]],v)_2
    \end{align*}
    Using $|G_2| \leq G_1$, $1-h_{\min}\geq 0$ and $\epsilon\underline{\zeta} \geq h_{\min}-1$ so:
    \begin{align*}
        (S_0(\underline{U})U,U)_2 &\geq |\zeta|_2^2 + h_{\min}|\mathrm{G}_1[v]|_2^2 +(1-h_{\min})|\mathrm{G}_2[v]|_2^2 + (h_{\min}-1)|\mathrm{G}_2[v]|_2^2  \\
        &\geq |\zeta|_2^2 + h_{\min}|\mathrm{G}_1[v]|_2^2,
    \end{align*}
    and the result is proved.
\end{proof}

\begin{mylem}\label{differential inequality}
    With the same assumptions as in Proposition \ref{Energy estimates of order 0 proposition section}, we have the following estimates 
    \begin{align*}
        \frac{\mathrm{d}}{\rm{d}t} (S_0(\underline{U})[U],U)_2 \leq  \epsilon C(|\underline{U}|_{W^{1,\infty}_t X^{t_0}},|\underline{U}|_{L^{\infty}_t X^{t_0+1}})(S_0(\underline{U})[U],U)_2 + \epsilon C(|\underline{U}|_{L^{\infty}_t X^{t_0}})|R|_{X^{0}}\sqrt{(S_0(\underline{U})[U],U)_2}.
    \end{align*}
\end{mylem}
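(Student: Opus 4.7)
The plan is to apply the symmetrizer $S_0(\underline{U})$ to the linearized equation, differentiate the weighted energy $\mathcal{E}(t) \colonequals (S_0(\underline{U})[U], U)_2$ in time, and estimate each contribution that arises. Since the symbol $G_2$ is real-valued and $\underline{\zeta}$ is a real function, $S_0(\underline{U})$ is self-adjoint on $L^2(\mathbb{R}^d)^{1+d}$, and so
\begin{align*}
    \frac{d}{dt}\mathcal{E}(t) = \big((\partial_t S_0(\underline{U}))[U], U\big)_2 + 2\big(S_0(\underline{U})[\partial_t U], U\big)_2.
\end{align*}
Applying $S_0(\underline{U})$ to the linearized equation and invoking Proposition~\ref{symmetrization of the WB system}, I would substitute
\begin{align*}
    S_0(\underline{U})[\partial_t U] = -\sum_{j=1}^d \widetilde{A}_j(\underline{U})[\partial_j U] + \epsilon\sum_{j=1}^d F_j(\underline{U})[\partial_j U] + \epsilon S_0(\underline{U})[R],
\end{align*}
producing four contributions to estimate.

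The contribution from $\partial_t S_0(\underline{U})$ reduces to $\epsilon(\mathrm{G}_2[(\partial_t \underline{\zeta})\mathrm{G}_2[v]], v)_2$, which I would bound by $\epsilon C(|\underline{U}|_{W^{1,\infty}_t X^{t_0}})|U|_{X^0}^2$ via Cauchy–Schwarz, the $L^\infty$-bound on $G_2$, and the Sobolev embedding $H^{t_0}\subset L^\infty$. For the $F_j$ contributions, Proposition~\ref{estimates remainder} gives $|F_j(\underline{U})[\partial_j U]|_{Y^0}\leq C(|\underline{U}|_{X^{t_0+1}})|U|_{X^0}$; combined with the natural duality pairing $(W, U)_2\leq |W|_{Y^0}|U|_{X^0}$ (which holds because $\mathrm{G}_1$ is self-adjoint and positive in Fourier, so one may insert $\mathrm{G}_1^{-1}\mathrm{G}_1$ and apply Cauchy–Schwarz on the velocity component), this yields an $\epsilon C(|\underline{U}|_{L^\infty_t X^{t_0+1}})|U|_{X^0}^2$ bound. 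The residual term is handled similarly by checking that $|S_0(\underline{U})[R]|_{Y^0}\leq C(|\underline{U}|_{L^\infty_t X^{t_0}})|R|_{X^0}$, using the admissibility condition $|G_2|\leq G_1$ to bound $\mathrm{G}_1^{-1}\mathrm{G}_2$ on $L^2$, producing a contribution of size $\epsilon C|R|_{X^0}|U|_{X^0}$.

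The main obstacle is the principal term $-2\sum_j (\widetilde{A}_j(\underline{U})[\partial_j U], U)_2$. Using the self-adjointness of $\widetilde{A}_j(\underline{U})$ and one integration by parts, it rewrites as $\sum_j ([\partial_j, \widetilde{A}_j(\underline{U})]U, U)_2$. The crucial observation is that in the operators $B_j(\underline{U})$ underlying $\widetilde{A}_j(\underline{U})$, the only constant-coefficient part is the block $(\mathrm{G}_1)^2$, which commutes with $\partial_j$; every other term carries an explicit $\epsilon$ and is a composition of the form (Fourier multiplier)$\,\circ\,$(multiplication by a function of $\underline{U}$)$\,\circ\,$(Fourier multiplier). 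Consequently the commutator is genuinely of order $\epsilon$, and combining the commutator and product estimates from the Appendix (Proposition~\ref{Commutator estimates}, Proposition~\ref{product estimate}) with the admissibility bound $|G_2|\leq G_1$ should yield the control $\epsilon C(|\underline{U}|_{L^\infty_t X^{t_0+1}})|U|_{X^0}^2$. A minor subtlety is that the commutator $[\partial_j,\widetilde{A}_j(\underline{U})]$ must be estimated in $Y^0$ in order to pair against $U\in X^0$; this is where the precise structure of the symmetrizer and the admissibility hypothesis $\langle\cdot\rangle\nabla G_k\in L^\infty$ enter.

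Finally, summing the four bounds and invoking the coercivity estimate \eqref{coercivity inequality} to replace $|U|_{X^0}^2$ by $h_{\min}^{-1}\mathcal{E}(t)$ and $|U|_{X^0}$ by $h_{\min}^{-1/2}\sqrt{\mathcal{E}(t)}$, I would arrive at the desired differential inequality.
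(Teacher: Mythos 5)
Your proposal is correct and follows essentially the same route as the paper: symmetrize, differentiate the weighted energy, transfer the derivative off the principal term via the symmetry of $\widetilde{A}_j(\underline{U})$ so that only the commutator $[\partial_j,\widetilde{A}_j(\underline{U})]=(\partial_j\widetilde{A}_j)(\underline{U})$ (of size $O(\epsilon)$, since the sole constant-coefficient block $(\mathrm{G}_1)^2$ drops out) survives, estimate the $F_j$, $S_0[R]$ and $\partial_tS_0$ contributions in $Y^0$ against $U\in X^0$, and conclude with the coercivity of $S_0(\underline{U})$.
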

\begin{proof}
Using the self-adjointness of $S_0(\underline{U})$, we have 
\begin{align*}
    \frac{1}{2} \frac{\mathrm{d}}{\rm{d}t}(S_0(\underline{U})[U],U)_2 &= \frac{1}{2} ((\partial_t S_0(\underline{U}))[U],U)_2 + (S_0(\underline{U})[\partial_t U],U)_2,
\end{align*}
where 
\begin{align*}
        \partial_t S_0(\underline{U})[\circ] = \begin{pmatrix} 0 & 0 \\ 0 & \epsilon\mathrm{G}_2[(\partial_t \underline{\zeta}) \mathrm{G}_2[\circ]]  \end{pmatrix}.
\end{align*}
Then using the symmetrization \eqref{symmetrized Wb equation general form}, we get
\begin{align*}
    \frac{1}{2} \frac{d}{\rm{d}t}(S_0(\underline{U})[U],U)_2 =  \frac{1}{2} ((\partial_t S_0(\underline{U}))[U],U)_2 - \sum \limits_{j=1}^{d} (\widetilde{A}_j(\underline{U})[\partial_j U],U)_2 + \epsilon (S_0(\underline{U})[R],U)_2 + \epsilon \sum\limits_{j=1}^d (F_j(U)[\partial_j U],U)_2.
\end{align*}
But using the symmetry of $\widetilde{A_j}$, $j=1,2$, we have
\begin{align*}
    (\widetilde{A}_j(\underline{U})[\partial_j U],U)_2 &= (\partial_j U, \widetilde{A}_j(\underline{U})[U])_2 \\
    &= -(U, (\partial_j \widetilde{A}_j(\underline{U}))[U])_2 - (U,\widetilde{A}_j(\underline{U})[\partial_j U])_2,
\end{align*}
where 
\begin{align*}
    (\partial_1 \widetilde{A}_1(\underline{U}))[\circ] = \frac{(\partial_1 B_1(\underline{U}))[\circ] + (\partial_1 (B_1(\underline{U})^*))[\circ]}{2},
\end{align*}
with 
\begin{align*} 
(\partial_1 B_1(\underline{U}))[\circ] = \begin{pmatrix} (\partial_1 B_1^{1,1}(\underline{U}))[\circ] & (\partial_1 B_1^{1,2}(\underline{U}))[\circ] & 0 \\ (\partial_1 B_1^{1,2}(\underline{U}))[\circ] & (\partial_1 B_1^{2,2}(\underline{U}))[\circ] & 0 \\ 0 & 0 & (\partial_1 B_1^{2,2}(\underline{U}))[\circ] \end{pmatrix},
\end{align*}
where 
\begin{align*}
    \begin{cases}
        (\partial_1 B_1^{1,1}(\underline{U}))[\circ] = \epsilon \mathrm{G}_2 [     \mathrm{G}_2[\partial_1 \underline{v_1}]\circ], \\
        (\partial_1 B_1^{1,2}(\underline{U}))[\circ] = \epsilon \mathrm{G}_2[(\partial_1 \underline{\zeta}) \mathrm{G}_2[\circ]], \\
        (\partial_1 B_1^{2,2}(\underline{U}))[\circ] = \epsilon(\mathrm{G}_1)^2 [\mathrm{G}_2[\partial_1 \underline{v_1}] \mathrm{G}_2[\circ]] + \epsilon^2 \mathrm{G}_2[(\partial_1\underline{\zeta})\mathrm{G}_2[\mathrm{G}_2[\underline{v_1}]\mathrm{G}_2[\circ]]] + \epsilon^2 \mathrm{G}_2[\underline{\zeta}\mathrm{G}_2[\mathrm{G}_2[\partial_1\underline{v_1}]\mathrm{G}_2[\circ]]],
    \end{cases}
\end{align*}
and 
\begin{align*}
    (\partial_2 \widetilde{A}_2(\underline{U}))[\circ] = \frac{(\partial_2 B_2(\underline{U}))[\circ] + (\partial_2 (B_2(\underline{U})^*))}{2},
\end{align*}
with 
\begin{align*} 
(\partial_2 B_2(\underline{U}))[\circ] = \begin{pmatrix} (\partial_2 B_2^{1,1}(\underline{U}))[\circ] & 0 &  (\partial_2 B_2^{1,3}(\underline{U}))[\circ] \\ 0 & (\partial_2 B_2^{2,2}(\underline{U}))[\circ] & 0 \\  (\partial_2 B_2^{1,3}(\underline{U}))[\circ] & 0 & (\partial_2 B_2^{2,2}(\underline{U}))[\circ] \end{pmatrix},
\end{align*}
where 
\begin{align*}
    \begin{cases}
        (\partial_2 B_2^{1,1}(\underline{U}))[\circ] = \epsilon \mathrm{G}_2 [     \mathrm{G}_2[\partial_2 \underline{v_2}]\circ], \\
        (\partial_2 B_2^{1,3}(\underline{U}))[\circ] = \epsilon \mathrm{G}_2[(\partial_2 \underline{\zeta}) \mathrm{G}_2[\circ]], \\
        (\partial_2 B_2^{2,2}(\underline{U}))[\circ] = \epsilon(\mathrm{G}_1)^2 [\mathrm{G}_2[\partial_2 \underline{v_2}] \mathrm{G}_2[\circ]] + \epsilon^2 \mathrm{G}_2[(\partial_2\underline{\zeta})\mathrm{G}_2[\mathrm{G}_2[\underline{v_2}]\mathrm{G}_2[\circ]]] + \epsilon^2 \mathrm{G}_2[\underline{\zeta}\mathrm{G}_2[\mathrm{G}_2[\partial_2\underline{v_2}]\mathrm{G}_2[\circ]]].
    \end{cases}
\end{align*}
For any $j = 1,2$, $(\partial_j (B_j(\underline{U})^*))[\circ]$ is also easily computed and have the same mathematical structure as $(\partial_j B_j(\underline{U}))[\circ]$.

So
\begin{align}\label{good equality energy estimates order 0}
    \frac{1}{2} \frac{\rm{d}}{\rm{d}t}(S_0(\underline{U})U,U)_2 = \frac{1}{2}((\partial_t S_0(\underline{U}) + \sum\limits_{j=1}^{d}\partial_j A_j(\underline{U}))[U],U)_2 + \epsilon (S_0(\underline{U})[R],U)_2 + \epsilon \sum\limits_{j=1}^d (F_j(U)[\partial_j U],U)_2.
\end{align}

The term $\epsilon(S_0(\underline{U})[R],U)_2$ is easily estimated by Cauchy-Schwarz inequality and \eqref{other estimates on the symmetrizer}:
\begin{align}\label{estimation on the remainder energy estimates of order 0 proof}
    \epsilon(S_0(\underline{U})[R],U)_2 \leq \epsilon |S_0(\underline{U})[R]|_{Y^0}|U|_{X^0} \leq \epsilon C(|\underline{U}|_{L^{\infty}_t X^{t_0}}) |R|_{X^0}|U|_{X^0}.
\end{align}

The term $\epsilon \sum\limits_{j=1}^d (F_j(U)[\partial_j U],U)_2$ is estimated using the Proposition \ref{estimates remainder}. Indeed, we have
\begin{align}\label{other term estimated}
    \epsilon \sum\limits_{j=1}^d (F_j(U)[\partial_j U],U)_2 \leq \epsilon \sum\limits_{j=1}^d |F_j(U)[\partial_j U]|_{Y^0}|U|_{X^0} \leq \epsilon C(|\underline{U}|_{L^{\infty}_t X^{t_0+1}})|U|_{X^{0}}^2.
\end{align}

Then, the result comes from the following estimates
\begin{align}\label{estimates derivated operators energy estimates order 0}
    |((\partial_t S_0(\underline{U}) + \sum\limits_{j=1}^{d} \partial_j A_j(\underline{U}))[U],U)_2| \leq \epsilon C(|\underline{U}|_{W^{1,\infty}_t X^{t_0}},|\underline{U}|_{L^{\infty}_t X^{t_0+1}})|U|_{X^0}^2.
\end{align}
We provide below some examples of controls needed to get the latter estimates:
\begin{align*}
    \begin{cases}
        |(\epsilon\mathrm{G}_2[(\partial_t \underline{\zeta}) \mathrm{G}_2[v], v)_2| \leq \epsilon |\partial_t\underline{\zeta}|_{L^{\infty}_t H^{t_0}}|\mathrm{G}_1[v]|_2^2 \leq \epsilon |\partial_t\underline{\zeta}|_{L^{\infty}_t H^{t_0}} |U|_{X^0}^2,\\
        |(\epsilon \mathrm{G}_2 [(\partial_1\underline{\zeta})\mathrm{G}_2[\zeta]],v_1)_2| \leq \epsilon |\underline{\zeta}|_{L^{\infty}_t H^{t_0+1}} |\zeta|_2|\mathrm{G}_1[v_1]|_2 \leq \epsilon |\underline{\zeta}|_{L^{\infty}_t H^{t_0+1}} |U|_{X^0}^2, \\
        |(\epsilon^2 \mathrm{G}_2[\partial_1\underline{\zeta}\mathrm{G}_2[\mathrm{G}_2[\underline{v_1}]\mathrm{G}_2[v_1]]], v_1)_2| \leq \epsilon^2 |\underline{\zeta}|_{L^{\infty}_t H^{t_0+1}}|\mathrm{G}_2[\mathrm{G}_2[\underline{v_1}]\mathrm{G}_2[v_1]]|_2|\mathrm{G}_2[v_1]|_2 \\
       \qquad  \leq \epsilon^2 |\underline{\zeta}|_{L^{\infty}_t H^{t_0+1}}|\mathrm{G}_1[\underline{v_1}]|_{L^{\infty}_t H^{t_0}}|\mathrm{G}_1[v_1]|_2^2 \leq \epsilon^2 |\underline{\zeta}|_{L^{\infty}_t H^{t_0+1}}|\underline{v_1}|_{L^{\infty}_t X^{t_0}} |U|_{X^0}^2,
    \end{cases}
\end{align*}
where we used the assumption $|G_2| \leq G_1$ and the boundedness of the latter Fourier multiplier in $H^s(\mathbb{R}^d)$ (see Proposition \ref{trivial}).
All other contributions are estimated using the same tools.

Combining \eqref{good equality energy estimates order 0}, \eqref{estimation on the remainder energy estimates of order 0 proof}, \eqref{other term estimated}, \eqref{estimates derivated operators energy estimates order 0} and \eqref{coercivity inequality} in Lemma \ref{coercivity symmetrizer} we get the desired differential inequality.
\end{proof}

We now have all the elements needed to prove the Proposition \ref{Energy estimates of order 0 proposition section}. 
\begin{proof}[Proof of Proposition \ref{Energy estimates of order 0 proposition section}. ]
By Lemma \ref{differential inequality}, we have
\begin{multline*}
      \sqrt{(S_0(\underline{U})[U],U)_2} \frac{\rm{d}}{\rm{d}t} \sqrt{(S_0(\underline{U})[U],U)_2} =  \frac{1}{2}\frac{\rm{d}}{\rm{d}t} (S_0(\underline{U})[U],U)_2 \\
     \leq  \epsilon C(\frac{1}{h_{\min}},|\underline{U}|_{W^{1,\infty}_t X^{t_0}},|\underline{U}|_{L^{\infty}_t X^{t_0+1}})(S_0(\underline{U})[U],U)_2 + \epsilon C(\frac{1}{h_{\min}},|\underline{U}|_{L^{\infty}_t X^{t_0}})|R|_{X^{0}}\sqrt{(S_0(\underline{U})[U],U)_2}.
\end{multline*}
 Dividing by $\sqrt{(S_0(\underline{U})[U],U)_2}$, we get
\begin{align*}
     \frac{\rm{d}}{\rm{d}t} \sqrt{(S_0(\underline{U})[U],U)_2} \leq \epsilon \lambda_0 \sqrt{(S_0(\underline{U})[U],U)_2} + \epsilon C(\frac{1}{h_{\min}},|\underline{U}|_{L^{\infty}_t X^{t_0}})|R|_{X^{0}},	
\end{align*} 
 with $\lambda_0= C(\tfrac{1}{h_{\min}},|\underline{U}|_{W^{1,\infty}_t X^{t_0}},|\underline{U}|_{L^{\infty}_t X^{t_0+1}})$.
We can integrate this inequality in time between $0$ and $t$ to get
\begin{align*}
     \sqrt{(S_0(\underline{U})[U],U)_2}  \leq e^{\epsilon \lambda_0 t} \sqrt{(S_0(\underline{U})[U],U)_2}|_{t=0} + \epsilon C(\tfrac{1}{h_{\min}},|\underline{U}|_{L^{\infty}_t X^{t_0}}) \int_{0}^{t} e^{\epsilon (t-t')\lambda_0}|R(t')|_{X^0}^2 \rm{d}t',
\end{align*} 
And using Lemma \ref{coercivity symmetrizer} yields the desired estimate.
\end{proof}
\subsection{Energy estimates of higher order}
\label{Energy estimates of higher order section}
We now prove Proposition \ref{energy estimates introduction} which we recall here for the sake of clarity.
\begin{mypp}\label{Energy estimates of order s proposition section}
  Let $s \geq 0$, $t_0 > d/2$ and $(\mathrm{G}_1,\mathrm{G}_2)$ be admissible Fourier multipliers. For any $\epsilon\in(0,1]$, $T>0$, $\underline{U} = \big( \underline{\zeta} , \underline{v} \big) \in W^{1,\infty}([0,T/\epsilon],X^{t_0}(\mathbb{R}^d))\cap L^{\infty}([0,T/\epsilon],X^{\max{(t_0+1,s)}}(\mathbb{R}^d))$ for which there exists $h_{\min} > 0$ such that for all $(t,x) \in [0,T/\epsilon]\times \mathbb{R}^d$ , \eqref{non cavitation} holds;
and for any $U = \big( \zeta , v \big) \in W^{1,\infty}([0,T/\epsilon],X^{s}(\mathbb{R}^d))\cap L^{\infty}([0,T/\epsilon],X^{s+1}(\mathbb{R}^d))$ satisfying the system
\begin{align*}
\partial_t U + \sum\limits_{j=1}^d A_j(\underline{U})[\partial_j U] = \epsilon R,
\end{align*}
where $R \in L^{\infty}([0,T/\epsilon],X^s(\mathbb{R}^d))$, and for $j = 1,2$, $A_j(\underline{U})$ is defined by \eqref{A_j introduction}, we have for any $t\in [0,T/\epsilon]$,
\begin{align}\label{energy estimates of order s}
|U|_{X^s} \leq \kappa_0 e^{\epsilon \lambda_s t} |U|_{X^s}|_{t=0} + \epsilon \nu_s \int_{0}^{t} |R(t')|_{X^s} \rm{dt}',
\end{align}
where $\lambda_s, \nu_s := C(\frac{1}{h_{\min}},T,|\underline{U}|_{W^{1,\infty}_t X^{t_0}},|\underline{U}|_{L^{\infty}_t X^{\max{(t_0+1,s)}}})$ and $\kappa_0 \colonequals C(\frac{1}{h_{\min}},|\underline{U}|_{X^{t_0}}|_{t=0})$.
\end{mypp}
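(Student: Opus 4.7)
My plan is to reduce the higher-order estimate to the $s=0$ case already established in Proposition \ref{Energy estimates of order 0 proposition section}, using the fact that $\Lambda^s$ is itself a Fourier multiplier and therefore commutes with the purely Fourier-multiplier pieces of $A_j(\underline{U})$. The first step is to note the identity $|\Lambda^s U|_{X^0}=|U|_{X^s}$, which holds because $\mathrm{G}_1$ and $\Lambda^s$ commute, so applying $\Lambda^s$ componentwise to $U$ transforms the $X^s$-norm into the $X^0$-norm of $\Lambda^s U$. Similarly $|\Lambda^s R|_{X^0}=|R|_{X^s}$.

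Next I apply $\Lambda^s$ to the equation satisfied by $U$. Since $(\mathrm{G}_1)^2$ and the constant entries ($1$) in the matrices $A_j(\underline{U})$ commute with $\Lambda^s$, only the variable-coefficient pieces produce commutators. Crucially, an inspection of \eqref{A_j introduction} shows that every single variable-coefficient entry of $A_j(\underline{U})$ carries a prefactor $\epsilon$, so one obtains
\begin{align*}
\partial_t(\Lambda^s U) + \sum_{j=1}^d A_j(\underline{U})[\partial_j \Lambda^s U] = \epsilon\,\mathcal{R}_s,
\qquad \mathcal{R}_s := \Lambda^s R - \epsilon^{-1}\sum_{j=1}^d [\Lambda^s,A_j(\underline{U})]\partial_j U,
\end{align*}
with no spurious $\epsilon^{-1}$ in $\mathcal{R}_s$. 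The $s=0$ estimate applied to $\Lambda^s U$ (which lies in the required functional spaces because $U\in W^{1,\infty}_t X^s\cap L^\infty_t X^{s+1}$) will then yield
\[
|U|_{X^s}(t)\le \kappa_0 e^{\epsilon\lambda_0 t}|U|_{X^s}|_{t=0}+\epsilon\nu_0\int_0^t |\mathcal{R}_s(t')|_{X^0}\,\mathrm{d}t'.
\]

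The key analytic step, which I expect to be the main obstacle, is the commutator bound
\[
\Bigl|\epsilon^{-1}\sum_{j=1}^d [\Lambda^s,A_j(\underline{U})]\partial_j U\Bigr|_{X^0}\le C(|\underline{U}|_{X^{\max(t_0+1,s)}})\,|U|_{X^s}.
\]
Typical terms to control are $\mathrm{G}_2\{[\Lambda^s,\mathrm{G}_2[\underline{v_k}]]\partial_j\zeta\}$, $\mathrm{G}_2\{[\Lambda^s,\underline{\zeta}]\mathrm{G}_2[\partial_j v_\ell]\}$, and $[\Lambda^s,\mathrm{G}_2[\underline{v_k}]]\mathrm{G}_2[\partial_j v_\ell]$ (plus the cubic versions coming from $B_j^{2,2}$). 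These are handled exactly in the spirit of the proof of Proposition \ref{estimates remainder}: the outer $\mathrm{G}_2$ is bounded on $L^2$ by $|G_2|_{L^\infty}$; the inner commutators are controlled by Kato--Ponce-type commutator estimates (from Proposition \ref{Commutator estimates} in the appendix); and the Fourier-multiplier norms $|\mathrm{G}_2 f|_{H^\sigma}\le |\mathrm{G}_1 f|_{H^\sigma}$ together with the admissibility bound $|\langle\cdot\rangle\nabla G_k|_{L^\infty}<\infty$ (needed to trade $|\nabla\mathrm{G}_2[\underline{v_k}]|_{L^\infty}$ or $|\mathrm{G}_2[\underline{v_k}]|_{H^\sigma}$ against $|\underline{U}|_{X^{\max(t_0+1,s)}}$) produce the required dependence on $\underline{U}$ in the norm $X^{\max(t_0+1,s)}$. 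The Sobolev embedding $H^{t_0}\hookrightarrow L^\infty$ provides the $L^\infty$ controls.

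Plugging this bound into the $s=0$ estimate yields
\[
|U|_{X^s}(t)\le \kappa_0 e^{\epsilon\lambda_0 t}|U|_{X^s}|_{t=0}+\epsilon\nu_0\int_0^t\!\bigl(|R(t')|_{X^s}+C(|\underline{U}|_{X^{\max(t_0+1,s)}})|U|_{X^s}(t')\bigr)\mathrm{d}t',
\]
and a standard Grönwall argument on $[0,T/\epsilon]$ absorbs the $|U|_{X^s}$ term on the right, producing the desired exponential factor $e^{\epsilon\lambda_s t}$ with $\lambda_s,\nu_s$ depending on $T$, $h_{\min}^{-1}$, $|\underline{U}|_{W^{1,\infty}_t X^{t_0}}$ and $|\underline{U}|_{L^\infty_t X^{\max(t_0+1,s)}}$, exactly as claimed.
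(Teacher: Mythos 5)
Your proposal is correct and follows essentially the same route as the paper: apply $\Lambda^s$ to the equation, use that $\Lambda^s$ commutes with the Fourier-multiplier parts of $A_j(\underline{U})$ so that only the $\epsilon$-prefactored variable-coefficient entries generate commutators, bound these by Kato--Ponce-type estimates to reduce to the $s=0$ energy estimate, and conclude with Gr\"onwall. The only slip is a citation: the inner commutators $[\Lambda^s,\underline{\zeta}]$ and $[\Lambda^s,\mathrm{G}_2[\underline{v_k}]]$ are handled by Proposition~\ref{Commutator estimates standard} (commutators with $\Lambda^s$), not Proposition~\ref{Commutator estimates} (commutators with order-$0$ symbols).
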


We begin by proving the following lemma.
\begin{mylem}\label{Lemma energy estimates of order s WB}
    With the same assumptions as Proposition \ref{Energy estimates of order s proposition section}, there exists $R_{(s)} \in L^{\infty}([0,T/\epsilon],X^0(\mathbb{R}^d))$ such that
    \begin{align*}
     \partial_t \Lambda^s U + \sum\limits_{j=1}^{d} A_j(\underline{U}) \partial_j \Lambda^s U = \epsilon R_{(s)},
    \end{align*}
    with $\Lambda^s \colonequals (1-\Delta)^{s/2}$ and
    \begin{align*}
        |R_{(s)}|_{X^0} \leq C(\tfrac{1}{h_{\min}},|\underline{U}|_{L^{\infty}_t X^{\max{(t_0+1,s)}}})(|U|_{X^s}+|R|_{X^s}).
    \end{align*}
\end{mylem}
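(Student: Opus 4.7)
The plan is to apply $\Lambda^s$ to the equation satisfied by $U$ and commute $\Lambda^s$ past the operators $A_j(\underline U)$. This yields
\begin{align*}
\partial_t \Lambda^s U + \sum_{j=1}^d A_j(\underline U)[\partial_j \Lambda^s U] = \epsilon \Lambda^s R + \sum_{j=1}^d [\Lambda^s,\,A_j(\underline U)]\partial_j U,
\end{align*}
so one naturally defines
\begin{align*}
R_{(s)} \colonequals \Lambda^s R + \tfrac{1}{\epsilon}\sum_{j=1}^d [\Lambda^s,\,A_j(\underline U)]\partial_j U.
\end{align*}

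The key observation is the decomposition $A_j(\underline U) = A_j^{(0)} + \epsilon\,\widetilde{A}_j(\underline U)$, where $A_j^{(0)}$ collects the entries equal to $1$ or $(\mathrm{G}_1)^2$ (the linearization of the flux at $U=0$), while $\widetilde{A}_j(\underline U)$ gathers the quasilinear entries involving multiplication by $\underline\zeta$ or by $\mathrm{G}_2[\underline{v_k}]$. Since $A_j^{(0)}$ consists of pure Fourier multipliers, it commutes exactly with $\Lambda^s$, so $[\Lambda^s,A_j(\underline U)] = \epsilon\,[\Lambda^s,\widetilde{A}_j(\underline U)]$ and the $1/\epsilon$ factor cancels, leaving
\begin{align*}
R_{(s)} = \Lambda^s R + \sum_{j=1}^d [\Lambda^s,\,\widetilde{A}_j(\underline U)]\partial_j U.
\end{align*}
The first term yields $|\Lambda^s R|_{X^0} = |R|_{X^s}$ by definition of the energy norm (using that $\Lambda^s$ and $\mathrm{G}_1$ commute), which accounts for the $|R|_{X^s}$ contribution in the target estimate.

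To handle the commutator contributions in $X^0$, one inspects the nonzero entries of $\widetilde{A}_j$: they are of three types, namely $\mathrm{G}_2[f\,\circ]$, $\mathrm{G}_2[f\,\mathrm{G}_2[\circ]]$, and $f\,\mathrm{G}_2[\circ]$, with $f\in\{\underline\zeta,\mathrm{G}_2[\underline{v_k}]\}$. Since $\Lambda^s$ commutes with the Fourier multipliers $\mathrm{G}_2$, each entry-wise commutator reduces to one of the canonical forms $\mathrm{G}_2\bigl[[\Lambda^s,f]\,g\bigr]$ or $[\Lambda^s,f]\,g$, where $g$ is either a component of $\partial_j U$ or $\mathrm{G}_2$ applied to such a component. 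Both outer operators ($\mathrm{G}_2$ here, and the $\mathrm{G}_1$ applied by the $X^0$-norm on the vector part of $R_{(s)}$) are bounded on $L^2$ since $G_1,G_2\in L^\infty$, and can be absorbed into a harmless constant.

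One is thus reduced to estimating $|[\Lambda^s,f]\,g|_{L^2}$, for which the Kato--Ponce type commutator bound of Proposition~\ref{Commutator estimates} gives
\begin{align*}
|[\Lambda^s,f]\,g|_{L^2} \lesssim |f|_{H^{\max(t_0+1,s)}}\,|g|_{H^{s-1}}.
\end{align*}
The comparisons $|G_2|\leq G_1$ and the loss of one derivative from $\partial_j$ yield $|g|_{H^{s-1}} \lesssim |U|_{X^s}$, while $|f|_{H^{\max(t_0+1,s)}} \lesssim |\underline U|_{X^{\max(t_0+1,s)}}$. Summing over all matrix entries and over $j=1,\dots,d$ yields the announced estimate $|R_{(s)}|_{X^0} \leq C(|\underline U|_{X^{\max(t_0+1,s)}})(|U|_{X^s}+|R|_{X^s})$. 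The main step requiring care is purely organizational: one must enumerate the nonzero entries (nine for $d=2$, four for $d=1$) and verify that every commutator reduces to the canonical form above; once this is done, the Kato--Ponce estimate closes everything uniformly. The $1/h_{\min}$ dependence in the statement is vacuous at this stage and is retained only for notational uniformity with the surrounding propositions.
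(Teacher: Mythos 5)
Your proof is correct and follows essentially the same route as the paper: apply $\Lambda^s$, observe that the constant-coefficient entries $1$ and $(\mathrm{G}_1)^2$ commute with $\Lambda^s$ so that every surviving entry of $[\Lambda^s,A_j(\underline U)]$ carries a factor $\epsilon$ cancelling the $1/\epsilon$, and close with the commutator estimate together with $|G_2|\leq G_1\in L^\infty$. Two cosmetic points only: the commutator term should enter the right-hand side with a minus sign (harmless for the norm estimate), and the bound you actually invoke is the one for $[\Lambda^s,f]$, i.e.\ Proposition~\ref{Commutator estimates standard}, not Proposition~\ref{Commutator estimates}.
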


\begin{proof}
We know that 
\begin{align*}
    &\partial_t U + \sum\limits_{j=1}^{d} A_j(\underline{U})\partial_j U = \epsilon R.
\end{align*}
Applying the operator $\Lambda^s = (1-\Delta)^{s/2}$ to this equation, we get
\begin{align*}
    \partial_t \Lambda^s U + \sum\limits_{j=1}^{d} A_j(\underline{U})[\partial_j \Lambda^s U] = \epsilon \Lambda^s R - \sum\limits_{j=1}^{d} [\Lambda^s,A_j(\underline{U})][\partial_j U].
\end{align*}
But because $\Lambda^s$ commutes with $G_1$ and $G_2$, we have
\begin{align}\label{A_j lambdas commutator}
    \begin{cases} [\Lambda^s,A_1(\underline{U})][\circ] = \begin{pmatrix} \epsilon \mathrm{G}_2 [ [\Lambda^s,\mathrm{G}_2[\underline{v_1}]][\circ]] & \epsilon \mathrm{G}_2[[\Lambda^s,\underline{\zeta}] \mathrm{G}_2[\circ]] & 0 \\ 0 & \epsilon [\Lambda^s,\mathrm{G}_2[\underline{v_1}]] \mathrm{G}_2[\circ] & 0 \\ 0 & 0 & \epsilon [\Lambda^s,\mathrm{G}_2[\underline{v_1}]] \mathrm{G}_2[\circ] \end{pmatrix}, \\ 
    [\Lambda^s,A_2(\underline{U})][\circ] = \begin{pmatrix}  \epsilon \mathrm{G}_2 [[\Lambda^s,\mathrm{G}_2[\underline{v_2}]]\circ] & 0 & \epsilon\mathrm{G_2}[[\Lambda^s,\underline{\zeta}]\mathrm{G}_2[\circ]] \\ 0 & \epsilon[\Lambda^s,\mathrm{G}_2[\underline{v_2}]] \mathrm{G}_2[\circ] & 0 \\ 0 & 0 & \epsilon[\Lambda^s,\mathrm{G}_2[\underline{v_2}]] \mathrm{G}_2[\circ] \end{pmatrix}.
    \end{cases}
\end{align}
So that using the commutator estimates of Proposition \ref{Commutator estimates standard} and $|G_2|\leq G_1\in L^\infty(\mathbb{R}^d)$, we get
\begin{align*}
    |\sum\limits_{j=1}^{d} [\Lambda^s,A_j(\underline{U})][\partial_j U]|_{X^0} \leq \epsilon C(|\underline{U}|_{L^{\infty}_t X^{\max{(t_0+1,s)}}})|U|_{X^s}.
\end{align*}
At the end, denoting $R_{(s)} = \Lambda^s R - \frac{1}{\epsilon}\sum\limits_{j=1}^{d} [\Lambda^s,A_j(\underline{U})][\partial_j U]$, we get the result.

\end{proof}

We now prove Proposition \ref{Energy estimates of order s proposition section}.
\begin{proof}[Proof of Proposition \ref{Energy estimates of order s proposition section}.]
Using the energy estimates and notations of Proposition \ref{Energy estimates of order 0 proposition section} and Lemma \ref{Lemma energy estimates of order s WB} we get
\begin{multline*}
|U|_{X^s} \leq \kappa_0 e^{\epsilon\lambda_0 t}|U|_{X^s}|_{t=0} + \epsilon \nu_0 \int_0^t C(\tfrac{1}{h_{\min}},|\underline{U}|_{L^{\infty}_t X^{\max{(t_0+1,s)}}})  |U|_{X^s} {\rm d}t' \\
        + \epsilon \nu_0 \int_0^t C(\tfrac{1}{h_{\min}},|\underline{U}|_{L^{\infty}_t X^{\max{(t_0+1,s)}}}) |R(t')|_{X^s} {\rm d}t',
\end{multline*}
and using Gronwall's lemma, we get
\[
    |U|_{X^s} \leq (\kappa_0 e^{\epsilon\lambda_0 t}|U|_{X^s}|_{t=0}+ \epsilon \nu_0 C_s \int_0^t  |R(t')]|_{X^s} {\rm d} t')e^{\epsilon \nu_0 C_s t} 
    \]
    with $C_s=C(\tfrac{1}{h_{\min}},|\underline{U}|_{L^{\infty}_t X^{\max{(t_0+1,s)}}})$, and hence
\[ |U|_{X^s} \leq \kappa_0 e^{\epsilon\lambda_s t}|U|_{X^s}|_{t=0} + \epsilon \nu_s \int_0^t |R(t')|_{X^s} {\rm d}t',
\]
where $\nu_s, \lambda_s := C(\frac{1}{h_{\min}}, T,|\underline{U}|_{W^{1,\infty}_t X^{t_0}},|\underline{U}|_{L^{\infty}_t X^{\max{(t_0+1,s)}}})$. This concludes the proof.
\end{proof}

\section{Local well-posedness and stability}
\label{Local well-posedness and stability section}
In this section we prove Theorem \ref{Thm wanted}, following the regularization technique employed for instance in the Chapter 7 of \cite{Metivier2008} to symmetrizable quasi-linear hyperbolic systems of conservation laws.
\subsection{Well-posedness of the linearized systems}
\label{Well-posedness of the linearized systems section}
In this subsection we study the local well-posedness of the systems \eqref{WB matricial form introduction} linearized around a sufficiently regular state.
\begin{mythm}\label{well posedness linearized system}
    Let $s > d/2 +1$, $h_{\min} >0$ and $0 \leq \epsilon \leq 1$. Let also $(\mathrm{G}_1,\mathrm{G}_2)$ be a couple of admissible Fourier multipliers. Let $\underline{U} \in W^{1,\infty}([0,T/\epsilon],X^{s-1}(\mathbb{R}^d)) \cap L^{\infty}([0,T/\epsilon],X^{s}(\mathbb{R}^d))$  be such that \eqref{non cavitation} is satisfied. Let $R \in L^{\infty}([0,T/\epsilon],X^s(\mathbb{R}^d))$ and $U_0 \in X^s(\mathbb{R}^d)$. The Cauchy problem 
    \begin{align}\label{system linearized around underline U}
        \begin{cases}
            \partial_t U + \sum\limits_{j=1}^d A_j(\underline{U})[\partial_j U] = \epsilon R,\\
            U|_{t=0} = U_0,
        \end{cases}
    \end{align}
    where $A_j(\underline{U})$ is defined by \eqref{A_j introduction}, has a unique solution in $C^0([0,T/\epsilon],X^s(\mathbb{R}^d))$. Moreover, the solution satisfies the energy estimates \eqref{energy estimates of order s}.
\end{mythm}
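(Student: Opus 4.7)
The plan is to follow the Friedrichs regularization technique of Chapter 7 in \cite{Metivier2008}, combined with the a priori estimate of Proposition \ref{Energy estimates of order s proposition section}. Uniqueness will be an immediate consequence of that estimate applied to the difference $W\colonequals U_1-U_2$ of two putative solutions: $W$ solves the homogeneous linearized system with zero data, and using the equation together with product and Fourier-multiplier estimates one has $\partial_t W \in L^\infty_t X^{s-1}$. Since $s>d/2+1$, Proposition \ref{Energy estimates of order 0 proposition section} applies with $t_0\colonequals s-1$ and yields $|W|_{X^0}\equiv 0$.

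For existence I will introduce the Friedrichs truncation $J_\nu = \chi(D/\nu)$ with $\chi \in C_c^\infty(\mathbb{R}^d)$, $\chi\equiv 1$ near the origin; in particular $J_\nu$ commutes with the Fourier multipliers $\mathrm{G}_1,\mathrm{G}_2$ and maps $X^0$ boundedly into $X^\infty$. I consider the mollified Cauchy problem
\begin{align*}
\partial_t U^\nu + J_\nu \sum_{j=1}^{d} A_j(\underline{U})[\partial_j J_\nu U^\nu] = \epsilon J_\nu R, \qquad U^\nu|_{t=0} = J_\nu U_0,
\end{align*}
which for each fixed $\nu$ is a linear ODE in $X^0$ with bounded time-dependent coefficients and continuous source; Cauchy--Lipschitz yields a unique global $C^1$ solution $U^\nu$ taking values in $X^\infty$. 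Rewriting the equation in the form $\partial_t U^\nu + \sum_j A_j(\underline{U})[\partial_j U^\nu] = \epsilon \widetilde R^\nu$ with $\widetilde R^\nu$ collecting $J_\nu R$ and the commutators between $J_\nu$ and $A_j(\underline{U})$, I will exploit the fact that only commutators of $J_\nu$ with the coefficients $\underline{\zeta}$ and $\mathrm{G}_2[\underline{v}]$ survive (since $J_\nu$ commutes with $\mathrm{G}_1,\mathrm{G}_2$); standard Kato--Ponce type estimates then give $|\widetilde R^\nu|_{X^s} \leq C(|\underline{U}|_{L^\infty_t X^s})(|U^\nu|_{X^s}+|R|_{X^s})$ uniformly in $\nu$. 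Proposition \ref{Energy estimates of order s proposition section} and Gronwall's lemma will then provide a uniform bound on $|U^\nu|_{L^\infty([0,T/\epsilon],X^s)}$.

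Applying Proposition \ref{Energy estimates of order 0 proposition section} to the difference $U^\nu-U^{\nu'}$ will show that $(U^\nu)_\nu$ is Cauchy in $C^0([0,T/\epsilon],X^0)$, the source term vanishing in this low-regularity norm thanks to the uniform $X^s$ bound. Interpolation upgrades this to convergence in $C^0([0,T/\epsilon],X^{s'})$ for every $s'<s$, so the limit $U$ solves the linearized system, belongs to $L^\infty_t X^s$ by weak-$*$ compactness, and inherits the estimate \eqref{energy estimates of order s}. Endpoint continuity $U\in C^0_t X^s$ will follow from a Bona--Smith argument, combining weak continuity in $X^s$ with continuity of the symmetrized energy $t\mapsto (S_0(\underline{U})\Lambda^s U,\Lambda^s U)_2$ obtained by reproducing the energy identity from the proof of Proposition \ref{Energy estimates of order s proposition section}. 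The principal technical obstacle is the uniform-in-$\nu$ control of the commutators $[J_\nu,A_j(\underline{U})]$ at the endpoint regularity $X^s$; the arguments will mirror those developed for Proposition \ref{estimates remainder} and Lemma \ref{Lemma energy estimates of order s WB}.
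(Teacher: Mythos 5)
Your overall architecture coincides with the paper's (Friedrichs-type regularization, uniform energy bounds, a Cauchy/compactness argument in a weaker norm, recovery of endpoint continuity, uniqueness via the order-zero estimate with $t_0=s-1$), and the uniqueness and Cauchy-sequence steps are sound. There is, however, a genuine gap in how you propose to obtain the uniform $X^s$ bound on $U^\nu$. You assert that $J_\nu\sum_j A_j(\underline U)\partial_j J_\nu U^\nu-\sum_j A_j(\underline U)\partial_j U^\nu$ consists only of commutators of $J_\nu$ with the coefficients, so that the mollified system can be read as the original linearized system with a source $\widetilde R^\nu$ satisfying $|\widetilde R^\nu|_{X^s}\lesssim |U^\nu|_{X^s}+|R|_{X^s}$ uniformly in $\nu$, to which Proposition \ref{Energy estimates of order s proposition section} applies. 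This is not correct: writing $P=\sum_j A_j(\underline U)\partial_j$, one has $J_\nu PJ_\nu-P=[J_\nu,P]J_\nu+P(J_\nu-\Id)$; the first term is uniformly of order $0$ by the Friedrichs commutator lemma, but the second is not a commutator and loses a derivative. Even exploiting that $\widehat{U^\nu}$ is supported in $\{|\xi|\lesssim\nu\}$, one only gets $|P(J_\nu-\Id)U^\nu|_{X^s}\lesssim\nu\,|U^\nu|_{X^s}$, since $\langle\xi\rangle\,(1-\chi(\xi/\nu))\approx\nu$ on the annulus where it does not vanish. Hence $\widetilde R^\nu$ is not uniformly bounded in $X^s$ and the quoted proposition cannot be invoked as a black box.

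The standard repair --- and what the paper effectively does --- is to rerun the symmetrized energy identity directly on the mollified equation rather than absorb the mollifiers into a source: test against $S_0(\underline U)\Lambda^{2s}U^\nu$, use that $J_\nu$ is self-adjoint and commutes with $\Lambda^s$, $\mathrm G_1$, $\mathrm G_2$ to move one $J_\nu$ across the inner product, so that the principal antisymmetric term pairs $J_\nu U^\nu$ with itself and cancels as in Lemma \ref{differential inequality}; the only remainders are then genuine commutators $[J_\nu,\underline\zeta]$, $[J_\nu,\mathrm G_2[\underline v_j]]$ and $[J_\nu,S_0(\underline U)]$, each uniformly of order $-1$, which compensate the derivative in $\partial_j$. (The paper sidesteps the issue by mollifying only inside the transport term, $A_j(\underline U)\partial_j J_\alpha U_\alpha$ with $J_\alpha=(1-\alpha\Delta)^{-1/2}$ and unmollified data and source, and asserting that the energy estimates carry over uniformly; the same self-adjointness argument is what justifies that assertion.) Your Bona--Smith route to the endpoint continuity $U\in C^0([0,T/\epsilon],X^s)$ is a workable alternative to the paper's more elementary argument, which applies $J_\alpha$ to the constructed weak solution and shows that $(J_\alpha U)_\alpha$ is Cauchy in $C^0([0,T/\epsilon],X^s)$ using the order-$s$ estimate on differences.
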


As said previously, to prove this result, we use a regularization method. 

Let $J_{\alpha} = (1-\alpha\Delta)^{-1/2}$, $\alpha \in ]0,1]$, be a regularizing Fourier multiplier. We have the following properties:
\begin{itemize}
    \item For $\alpha > 0$, $J_{\alpha}$ is a regularizing operator of order -1.
    \item For any $s \geq 0$, the family $\{ J_{\alpha}, \alpha \in ]0,1] \}$ is uniformly bounded in $X^s(\mathbb{R}^d)$.
    \item For any $s \geq 0$, and for all $v \in X^s(\mathbb{R}^d), J_{\alpha}v \to v$ in $X^s(\mathbb{R}^d)$ as $\alpha \to 0$.
\end{itemize}

We decompose the proof of Theorem \ref{well posedness linearized system} into several lemmas. 
\begin{mylem}
    With the same assumptions as in Theorem \ref{well posedness linearized system}, the Cauchy problem \eqref{system linearized around underline U} has a weak solution $U \in L^{\infty}([0,T/\epsilon],X^s(\mathbb{R}^d))$.
\end{mylem}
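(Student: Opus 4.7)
The plan is to construct the weak solution by a Friedrichs-type regularization procedure, solve the resulting family of ODEs by Cauchy--Lipschitz, obtain uniform-in-$\alpha$ energy bounds by mimicking the proof of Proposition \ref{Energy estimates of order s proposition section}, and then pass to the limit using weak-$\ast$ compactness.

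More precisely, I would first introduce, for $\alpha \in (0,1]$, the regularized Cauchy problem
\begin{align*}
\begin{cases}
\partial_t U^\alpha + \sum\limits_{j=1}^d J_\alpha \big[ A_j(\underline{U})[\partial_j J_\alpha U^\alpha] \big] = \epsilon J_\alpha R, \\
U^\alpha|_{t=0} = U_0.
\end{cases}
\end{align*}
The point of this symmetric placement of $J_\alpha$ is two-fold. First, since $\partial_j J_\alpha$ is bounded on $X^s(\mathbb{R}^d)$ with norm $O(\alpha^{-1/2})$ and $A_j(\underline{U})$ acts continuously on $X^s$ thanks to the regularity $\underline{U}\in L^\infty_t X^s$ and the boundedness of $\mathrm{G}_1,\mathrm{G}_2$, the right-hand side is a bounded linear operator on $X^s(\mathbb{R}^d)$ with values in $X^s(\mathbb{R}^d)$, continuous in time. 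Hence the Cauchy--Lipschitz theorem in the Banach space $X^s(\mathbb{R}^d)$ yields a unique global solution $U^\alpha \in C^1([0,T/\epsilon], X^s(\mathbb{R}^d))$. Second, the symmetric placement is what makes the symmetrization argument of Section \ref{Energy estimates of order 0 section} still go through.

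The next step is to establish uniform-in-$\alpha$ bounds on $U^\alpha$ in $L^\infty([0,T/\epsilon], X^s(\mathbb{R}^d))$. I would reproduce the argument of Proposition \ref{Energy estimates of order s proposition section}: apply $\Lambda^s$ to the regularized equation, multiply by $S_0(\underline{U})\Lambda^s U^\alpha$, and integrate. Since $J_\alpha$ is self-adjoint on $L^2$ and commutes with $\Lambda^s$ and with $\mathrm{G}_1,\mathrm{G}_2$, and since the symmetrizer $S_0(\underline{U})$ is built from these operators and multiplication by $\underline{\zeta}$, the symmetric/skew-symmetric splitting of $B_j(\underline{U})$ is preserved up to commutators of the form $[J_\alpha, \underline{\zeta}\,\cdot\,]$ and $[J_\alpha, \mathrm{G}_2[\underline{v}_k]\,\cdot\,]$. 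These commutators are bounded uniformly in $\alpha$ on $H^s$ by the standard commutator estimate (Proposition \ref{Commutator estimates standard}, quoted later) applied to the symbol $(1+\alpha|\xi|^2)^{-1/2}$, whose first derivative in $\xi$ is uniformly bounded. Combining this with Lemma \ref{coercivity symmetrizer}, Lemma \ref{differential inequality} and Gronwall yields exactly the energy estimate \eqref{energy estimates of order s} for $U^\alpha$, with constants independent of $\alpha$.

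Finally, from the uniform bound $\sup_{\alpha\in(0,1]}|U^\alpha|_{L^\infty_t X^s} < \infty$, the Banach--Alaoglu theorem provides $U \in L^\infty([0,T/\epsilon], X^s(\mathbb{R}^d))$ and a sequence $\alpha_n \to 0$ such that $U^{\alpha_n} \rightharpoonup^* U$ in $L^\infty_t X^s$. Testing the regularized equation against an arbitrary smooth compactly supported test function and using $J_{\alpha_n} \to \mathrm{Id}$ strongly on $X^{s-1}$ together with the continuity of $A_j(\underline{U})$ from $X^s$ to $Y^{s-1}$, one passes to the limit term by term and recovers \eqref{system linearized around underline U} in the distributional sense, with initial datum $U_0$ (the latter being checked by integration by parts in time against a test function). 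The main obstacle is the uniform energy estimate for the regularized system: one must verify carefully that the error terms generated by the commutators of $J_\alpha$ with the variable coefficients of $A_j(\underline{U})$ and with the symmetrizer $S_0(\underline{U})$ remain bounded as $\alpha\to 0$, which is the role of the uniform-in-$\alpha$ boundedness of the symbol of $J_\alpha$ in the class defining the admissible Fourier multipliers.
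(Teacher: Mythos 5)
Your proposal is correct and follows essentially the same route as the paper: Friedrichs regularization with $J_\alpha=(1-\alpha\Delta)^{-1/2}$, global solvability of the regularized problem by Cauchy--Lipschitz in $X^s$, uniform-in-$\alpha$ energy estimates inherited from Proposition \ref{Energy estimates of order s proposition section}, and passage to the limit by weak-$\ast$ compactness. The only (harmless) differences are that the paper mollifies on one side only, writing $A_j(\underline{U})\partial_j J_\alpha U_\alpha$ with right-hand side $\epsilon R$, and recovers the initial trace by noting that the equation gives $\partial_t U\in L^\infty_t X^{s-1}$, hence $U\in C^0_t X^{s-1}$; also note that the uniform commutator bounds you invoke come from Proposition \ref{Commutator estimates} (symbols of order $0$ with $\langle\cdot\rangle\nabla G\in L^\infty$) rather than Proposition \ref{Commutator estimates standard}.
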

\begin{proof}
    We consider the Cauchy problem
    \begin{align}\label{Cauchy problem proof existence theorem}
        \begin{cases}
            \partial_t U_{\alpha} + \sum\limits_{j=1}^d A_j(\underline{U})\partial_j J_{\alpha}U_{\alpha} = \epsilon R,\\
            U_{\alpha}|_{t=0} = U_0.
        \end{cases}
    \end{align}
    For $j=1,2$ the operator $A_j(\underline{U})\partial_j J_{\alpha}$ is bounded in $X^s(\mathbb{R}^d)$, so that the Cauchy-Lipschitz theorem gives the existence of a solution $U_{\alpha} \in C^0([0,T/\epsilon],X^s(\mathbb{R}^d))$. Because $J_{\alpha}$ is a Fourier multiplier of order $0$ uniformly in $\alpha$, and is bounded in $X^s(\mathbb{R}^d)$ uniformly in $\alpha$, we can get the same energy estimates for \eqref{Cauchy problem proof existence theorem} as the ones for \eqref{system linearized around underline U}. It implies that the sequence $U_{\alpha}$ is bounded in $L^{\infty}([0,T/\epsilon],X^s(\mathbb{R}^d))$.
    
    Now recall that $L^{\infty}([0,T/\epsilon],X^s(\mathbb{R}^d))$ is the dual of $L^1([0,T/\epsilon],Y^{-s}(\mathbb{R}^d))$. So by the weak* compactness of the closed balls of the dual of a normed space, there exists a subsequence, still denoted $U_{\alpha}$, which converges weak* as $\alpha \to 0$ to an element $U$. Passing to the limit in the sense of distributions in \eqref{Cauchy problem proof existence theorem} we get $U \in L^{\infty}([0,T/\epsilon],X^s(\mathbb{R}^d))$ is a weak solution of \eqref{system linearized around underline U}.
    
    It still remains to prove that $U|_{t=0} = U_0$ makes sense (and holds). Remark that from the equation $\partial_t U \in L^{\infty}([0,T/\epsilon],X^{s-1}(\mathbb{R}^d))$. Hence $U \in C^0([0,T/\epsilon],X^{s-1}(\mathbb{R}^d))$, and it makes sense to take the trace at $t=0$ of $U$ in $X^{s-1}(\mathbb{R}^d)$, and we do have $U|_{t=0} = U_0$ from the limiting process.
\end{proof}

\begin{mylem}\label{Uniqueness}
    Let $R \in L^{\infty}([0,T/\epsilon],X^s(\mathbb{R}^d))$ and suppose that $U \in L^{\infty}([0,T/\epsilon],X^s(\mathbb{R}^d))$ satisfies the linearized system \eqref{system linearized around underline U} with $U_0 = U|_{t=0} \in X^s(\mathbb{R}^d)$. Then $U \in C^0([0,T/\epsilon],X^s(\mathbb{R}^d))$ and satisfies the energy estimates \eqref{energy estimates of order s}. 
\end{mylem}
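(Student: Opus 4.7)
The plan is to transfer the energy estimate of Proposition \ref{Energy estimates of order s proposition section} from a smoothed version of $U$ to $U$ itself by a Friedrichs-type regularization. The obstacle is that Proposition \ref{Energy estimates of order s proposition section} requires the solution to lie in $W^{1,\infty}_t X^s \cap L^\infty_t X^{s+1}$, whereas here we only know $U \in L^\infty_t X^s$. The natural remedy is to work with $U_\alpha := J_\alpha U$, where $J_\alpha = (1-\alpha\Delta)^{-1/2}$, exploit its enhanced regularity to apply the energy estimate, then let $\alpha \to 0$.

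Since $J_\alpha$ is a Fourier multiplier commuting with $\Lambda^s$, we have $U_\alpha \in L^\infty_t X^{s+1}$, and applying $J_\alpha$ to the linearized equation yields
\begin{align*}
\partial_t U_\alpha + \sum_{j=1}^d A_j(\underline{U})[\partial_j U_\alpha] = \epsilon J_\alpha R - \sum_{j=1}^d [J_\alpha, A_j(\underline{U})]\partial_j U.
\end{align*}
From the equation we read off $\partial_t U_\alpha \in L^\infty_t X^s$, so $U_\alpha \in C^0_t X^s \cap L^\infty_t X^{s+1}$, which suffices to invoke Proposition \ref{Energy estimates of order s proposition section}. The crux is controlling the commutator: since $A_j(\underline{U})$ is built from the Fourier multipliers $\mathrm{G}_1, \mathrm{G}_2$ (which commute with $J_\alpha$) and multiplications by the coefficients $\underline{\zeta}$ and $\mathrm{G}_2[\underline{v_k}]$, each commutator reduces to compositions of bounded Fourier multipliers with expressions of the form $[J_\alpha,f]\partial_j$, where $f$ is controlled by $|\underline{U}|_{X^{\max(t_0+1,s)}}$. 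A Friedrichs-type estimate (in the spirit of Proposition \ref{Commutator estimates standard}) yields that $[J_\alpha, A_j(\underline{U})]\partial_j U$ is bounded in $L^\infty_t X^s$ uniformly in $\alpha \in (0,1]$ and converges to $0$ in $X^s$ for almost every $t$ as $\alpha \to 0$.

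To pass to the limit, apply Proposition \ref{Energy estimates of order s proposition section} to the difference $U_\alpha - U_\beta$, which solves the same linearized equation with initial datum $(J_\alpha - J_\beta)U_0$ and source $\epsilon(J_\alpha - J_\beta)R$ plus the difference of the commutator terms. The strong convergence $J_\alpha \to \Id$ on $X^s$ combined with dominated convergence (using the uniform bound of the previous step as an integrable majorant) shows that $(U_\alpha)$ is Cauchy in $C^0_t X^s$; its limit necessarily coincides with $U$ since $U_\alpha \to U$ already holds in $L^\infty_t X^{s-1}$. Hence $U \in C^0_t X^s$. Passing to the limit $\alpha \to 0$ in the energy inequality for $U_\alpha$ produces the estimate \eqref{energy estimates of order s} for $U$. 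Finally, uniqueness of the weak solution follows at once by applying this estimate to the difference of two solutions sharing the same initial data.
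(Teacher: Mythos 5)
Your proposal is correct and follows essentially the same route as the paper: regularize with $J_\alpha$, use the gained regularity of $J_\alpha U$ to invoke the order-$s$ energy estimate, control the commutator $[J_\alpha, A_j(\underline{U})]\partial_j U$ uniformly in $\alpha$ and show it vanishes in the limit, deduce that $(J_\alpha U)_\alpha$ is Cauchy in $C^0_t X^s$ by applying the energy estimate to differences, and pass to the limit in the energy inequality. The only cosmetic difference is how the vanishing of the commutator is justified (the paper argues via density of $X^{s+1}$ in $X^s$, you via pointwise-in-time convergence plus domination), which does not change the substance.
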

\begin{proof}
    Applying $J_{\alpha}$ to the system \eqref{system linearized around underline U}, we get
    \begin{align}\label{regularized linearized system}
        \partial_t J_{\alpha}U + \sum\limits_{j=1}^d A_j(\underline{U})[\partial_j J_{\alpha}U] = \epsilon J_{\alpha}R - \sum\limits_{j=1}^d [J_{\alpha},A_j(\underline{U})][\partial_j U].
    \end{align}
    We denote $R_{(\alpha)} = J_{\alpha} - \frac{1}{\epsilon}\sum\limits_{j=1}^d [J_{\alpha},A_j(\underline{U})][\partial_j U]$. We easily see that $R_{(\alpha)} \in L^{\infty}([0,T/\epsilon],X^s(\mathbb{R}^d))$ using the same argument as for the proof of Lemma \ref{Lemma energy estimates of order s WB} and the fact that $J_{\alpha}$ is of order $0$.
    
    Moreover from the density of $X^{s+1}(\mathbb{R}^d)$ in $X^s(\mathbb{R}^d)$, we get
    \begin{align*}
        [J_{\alpha},A_j(\underline{U})][\partial_j U] \to 0 \ \ \mathrm{in} \ \ L^{\infty}([0,T/\epsilon],X^s(\mathbb{R}^d)),
    \end{align*}
    as $\alpha \to 0$. It implies that $R_{(\alpha)} \to R$ in $L^{\infty}([0,T/\epsilon],X^s(\mathbb{R}^d))$.
    
    We know that $J_{\alpha}U$ is in $L^{\infty}([0,T/\epsilon],X^{s+1}(\mathbb{R}^d))$, so using the equation \eqref{regularized linearized system}, we get $\partial_t J_{\alpha} U \in L^{\infty}([0,T/\epsilon],X^{s}(\mathbb{R}^d))$. The Sobolev embedding in dimension 1 gives $J_{\alpha}U \in C^0([0,T/\epsilon],X^{s}(\mathbb{R}^d))$.
    Using the energy estimates of order $s$ \eqref{energy estimates of order s} on $J_{\alpha}U - J_{\alpha'}U$ we get that $(J_{\alpha}U)_{\alpha \geq 0}$ is a Cauchy sequence in $C^0([0,T/\epsilon],X^s(\mathbb{R}^d))$ as $\alpha \to 0$. So $J_{\alpha}U$ converges in $C^0([0,T/\epsilon],X^s(\mathbb{R}^d))$. But $J_{\alpha}U$ converges to $U$ in $L^{\infty}([0,T/\epsilon],X^s(\mathbb{R}^d))$. Thus $J_{\alpha}U \to U$ in $C^0([0,T/\epsilon],X^s(\mathbb{R}^d))$ as $\alpha \to 0$.
    
    Using again the energy estimates of order $s$ \eqref{energy estimates of order s} but this time on $J_{\alpha}U$ and passing to the limit $\alpha \to 0$, we get that $U$ satisfies the energy estimates of order $s$.
\end{proof}

We now complete the proof of Theorem \ref{well posedness linearized system}.
\begin{proof}
    The two previous lemmas provide the existence of a solution $U \in C^0([0,T/\epsilon],X^s(\mathbb{R}^d))$ which satisfies the energy estimates \eqref{energy estimates of order s}. 
    
    It only remains to prove the uniqueness. For two solutions $U_1$ and $U_2$ with the same initial condition $U_0 \in X^s(\mathbb{R}^d)$, the difference $V = U_1 - U_2$ satisfies the system
    \begin{align*}
        \begin{cases}
            \partial_t V + \sum\limits_{j=1}^d A_j(\underline{U}) \partial_j V = 0, \\
            V|_{t=0} = 0.
        \end{cases}
    \end{align*}
    There remains to use the energy estimates of Proposition \ref{Energy estimates of order 0 proposition section} to infer $V = 0$.
\end{proof}

\subsection{Well-posedness of the non-linear systems}
\label{Well-posedness of the systems section}
This subsection is dedicated to the proof of Theorem \ref{Thm wanted} which we recall here for the sake of clarity.

\begin{mythm}\label{Theorem Well posedness WB}
    Let $s > d/2 +1$, $h_{\min} >0$ and $M > 0$. Let also $(\mathrm{G}_1,\mathrm{G}_2)$ be a couple of admissible Fourier multipliers. There exist $T >0$ and $C > 0$ such that for all $\epsilon \in (0,1]$, $U_0 \in X^s(\mathbb{R}^d)$ with $|U_0|_{X^s} \leq M$ and satisfying \eqref{non cavitation}, there exists a unique solution $U \in C^0([0,T/\epsilon],X^s(\mathbb{R}^d))$ of the Cauchy problem
    \begin{align}\label{Cauchy problem}
        \begin{cases}
            \partial_t U + \sum\limits_{j=1}^d A_j(U)\partial_j U = 0, \\
            U|_{t=0} = U_0.
        \end{cases}
    \end{align}
    Moreover $|U|_{L^{\infty}([0,T/\epsilon], X^s)} \leq C |U_0|_{X^s}$.
\end{mythm}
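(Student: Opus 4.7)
I will prove Theorem \ref{Theorem Well posedness WB} via a Picard iteration scheme, relying crucially on Theorem \ref{well posedness linearized system} (existence for the linearized problem) and the energy estimates of Proposition \ref{Energy estimates of order s proposition section}. The plan is to construct a sequence $(U^n)_{n\geq 0}$ via the linearization, prove that it remains uniformly bounded in $L^\infty([0,T/\epsilon],X^s)$ and satisfies a uniform non-cavitation condition, then show that it is Cauchy in the weaker norm $X^{s-1}$, and finally upgrade convergence to $C^0([0,T/\epsilon],X^s)$.

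\medskip

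\textbf{Step 1: Iteration scheme.} Set $U^0(t,x) \colonequals U_0(x)$ and inductively define $U^{n+1}$ as the unique solution in $C^0([0,T/\epsilon],X^s(\mathbb{R}^d))$ of
\[
\partial_t U^{n+1} + \sum_{j=1}^d A_j(U^n)[\partial_j U^{n+1}] = 0, \qquad U^{n+1}|_{t=0} = U_0,
\]
provided by Theorem \ref{well posedness linearized system}. To apply it, I must verify at each step that $U^n \in W^{1,\infty}([0,T/\epsilon],X^{s-1})\cap L^\infty([0,T/\epsilon],X^s)$ and that $1+\epsilon\underline{\zeta}^{\,n} \geq h_{\min}/2$ (we will allow the non-cavitation constant to shrink by a factor of $2$).

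\medskip

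\textbf{Step 2: Uniform bounds, $T$ independent of $\epsilon$.} Fix a radius $R_0 \colonequals 2\kappa_0^\star M$ where $\kappa_0^\star$ is a constant obtained by evaluating Proposition \ref{Energy estimates of order s proposition section}'s $\kappa_0$ along balls of size $M$. I claim that there exists $T>0$ depending on $M$, $h_{\min}$, $s$ and the symbols $G_1,G_2$, but \emph{not on $\epsilon$}, such that for all $n\geq 0$,
\[
|U^n|_{L^\infty([0,T/\epsilon],X^s)} \leq R_0, \qquad \inf_{[0,T/\epsilon]\times\mathbb{R}^d}(1+\epsilon\underline{\zeta}^{\,n}) \geq h_{\min}/2.
\]
The high-norm bound propagates by applying the energy estimate of Proposition \ref{Energy estimates of order s proposition section} with $R=0$ and $\underline{U}=U^n$: one finds $|U^{n+1}|_{X^s} \leq \kappa_0 e^{\epsilon \lambda_s t}|U_0|_{X^s}$, and since $\epsilon t \leq T$ for $t\in[0,T/\epsilon]$ and $\lambda_s,\kappa_0$ depend only on $R_0$, $h_{\min}$ and $T$, choosing $T$ small guarantees $\kappa_0 e^{\lambda_s T}\leq 2\kappa_0^\star$. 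The non-cavitation propagates because the equation gives $|\partial_t\zeta^{n+1}|_{L^\infty_{t,x}}\leq C(R_0)$ (via Sobolev embedding $H^{s-1}\hookrightarrow L^\infty$ since $s>d/2+1$), whence
\[
\epsilon|\zeta^{n+1}(t,\cdot)-\zeta_0|_{L^\infty} \leq \epsilon t\, C(R_0) \leq T\,C(R_0),
\]
which can be made $\leq h_{\min}/2$ by shrinking $T$. The time-derivative bound $|\partial_t U^{n+1}|_{L^\infty_t X^{s-1}} \lesssim C(R_0)R_0$ follows directly from the equation and product/commutator estimates on the operators $A_j$.

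\medskip

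\textbf{Step 3: Cauchy in the low norm.} Setting $V^n \colonequals U^{n+1}-U^n$, I subtract the equations for $U^{n+1}$ and $U^n$ to obtain
\[
\partial_t V^n + \sum_{j=1}^d A_j(U^n)[\partial_j V^n] = -\sum_{j=1}^d \bigl(A_j(U^n)-A_j(U^{n-1})\bigr)[\partial_j U^n],
\]
with $V^n|_{t=0}=0$. The right-hand side is a bilinear expression in $V^{n-1}=U^n-U^{n-1}$ and $\partial_j U^n$; using product and commutator estimates (and the structure of $A_j$ being $\Id+\epsilon\cdot\text{nonlinear}$) one estimates its $X^{s-1}$-norm by $\epsilon\, C(R_0)\,|V^{n-1}|_{X^{s-1}}$. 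Applying Proposition \ref{Energy estimates of order s proposition section} at the order $s-1$ to $V^n$ (which is legitimate since $s-1 > d/2$) and integrating over $[0,T/\epsilon]$, the integral of $\epsilon|R|_{X^{s-1}}$ becomes $\epsilon\cdot (T/\epsilon)\cdot \epsilon C(R_0)|V^{n-1}|_{L^\infty_t X^{s-1}} = \epsilon T\, C(R_0)|V^{n-1}|_{L^\infty_t X^{s-1}}$, yielding
\[
|V^n|_{L^\infty([0,T/\epsilon],X^{s-1})} \leq T\, C(R_0,h_{\min})\, |V^{n-1}|_{L^\infty([0,T/\epsilon],X^{s-1})}.
\]
After shrinking $T$ once more so that $T\,C(R_0,h_{\min})\leq 1/2$, the sequence $(U^n)$ is Cauchy in $C^0([0,T/\epsilon],X^{s-1})$ and therefore converges to some $U \in C^0([0,T/\epsilon],X^{s-1})$.

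\medskip

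\textbf{Step 4: Passage to the limit, continuity at the top regularity, and uniqueness.} Since $(U^n)$ is uniformly bounded in $L^\infty([0,T/\epsilon],X^s)$, weak-$\star$ compactness (noting $X^s$ is the dual of $Y^{-s}$) gives $U\in L^\infty([0,T/\epsilon],X^s)$; strong convergence in $X^{s-1}$ lets me pass to the limit in all nonlinear terms of the equation, so $U$ solves the Cauchy problem \eqref{Cauchy problem} and $|U|_{L^\infty_t X^s}\leq R_0 = 2\kappa_0^\star M\lesssim |U_0|_{X^s}$. The anticipated main obstacle is upgrading $U\in L^\infty_t X^s \cap C^0_t X^{s-1}$ to $U\in C^0_t X^s$: I plan to argue via the Bona--Smith regularization technique, approximating $U_0$ by $J_\alpha U_0\in X^{s+1}$, invoking the above construction to obtain smoother solutions $U^\alpha$ on a \emph{common} time interval (the bounds are uniform in $\alpha$), and passing to the limit $\alpha\to 0$ using a stability estimate at the $X^{s-1}$-level together with continuity of the map $U_0\mapsto U$. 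Continuity in $X^s$ then follows from the standard argument that both $\limsup_{t\to t_0^\pm}|U|_{X^s}$ and $\liminf$ coincide with $|U(t_0)|_{X^s}$ (using the energy functional $(S_0(U)[U],U)_2$ provided by Proposition \ref{symmetrization of the WB system}, whose coercivity is given by Lemma \ref{coercivity symmetrizer}). Finally, uniqueness: for two solutions $U_1,U_2\in C^0_t X^s$ with the same data, the difference $V=U_1-U_2$ satisfies a linear equation of the same form as in Step 3 with $V|_{t=0}=0$, and Proposition \ref{Energy estimates of order 0 proposition section} applied at the $X^0$-level (or $X^{s-1}$-level) forces $V\equiv 0$ on $[0,T/\epsilon]$.
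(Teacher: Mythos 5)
Your skeleton (Picard iteration on the linearized problem, uniform $X^s$ bounds with propagation of a weakened non-cavitation condition $1+\epsilon\zeta^n\geq h_{\min}/2$ on a time interval $[0,T/\epsilon]$ with $T$ independent of $\epsilon$, contraction in a lower norm, then passage to the limit) is exactly the paper's, and Steps 1--3 are sound; the only cosmetic difference there is that you contract in $X^{s-1}$ with $T$ small, whereas the paper contracts in $X^{0}$ and gets convergence of $\sum V_n$ from the factorial bound $|V_n|_{X^0}\leq \tfrac{M^n t^n}{n!}\sup|V_0|_{X^0}$ without shrinking $T$ again --- both are legitimate. The real divergence is in Step 4. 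You propose Bona--Smith regularization of the initial data to recover $U\in C^0([0,T/\epsilon],X^s)$, but you leave it as a plan rather than carrying it out; while that route is standard and would work, it is considerably heavier than necessary here and would require you to additionally prove a stability estimate with respect to the data at the $X^{s-1}$ level and a convergence rate for $J_\alpha U_0\to U_0$. The paper's shortcut is to observe that the limit $U$, which you already know lies in $L^\infty([0,T/\epsilon],X^s)$ with $\partial_t U\in L^\infty([0,T/\epsilon],X^{s-1})$ and solves the equation, is in particular a bounded solution of the \emph{linear} system \eqref{system linearized around underline U} with coefficient $\underline{U}=U$; the regularity lemma underlying Theorem \ref{well posedness linearized system} (Lemma \ref{Uniqueness}: any $L^\infty_t X^s$ solution of the linearized system with $X^s$ data belongs to $C^0_t X^s$ and satisfies the energy estimates, proved by commuting with $J_\alpha$) then immediately upgrades $U$ to $C^0([0,T/\epsilon],X^s)$ and simultaneously delivers uniqueness. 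Since you already invoke Theorem \ref{well posedness linearized system}, you could replace your Bona--Smith sketch by this one-line reduction and obtain a complete proof; as written, your Step 4 is the one place where the argument is not actually finished.
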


\begin{proof}
Consider the iterative scheme $U_0(t,x) = U_0(x)$ and for $n \in \mathbb{N}$
\begin{align}\label{iterative scheme}
    \begin{cases}
        \partial_t U_{n+1} + \sum\limits_{j=1}^d A_j(U_n)\partial_j U_{n+1} = 0, \\
        U_{n+1}|_{t=0} = U_0.
    \end{cases}
\end{align}.

\begin{mylem}\label{Uniform bounds}
    There exists $T>0$ as in Theorem \ref{Theorem Well posedness WB} such that the sequences $U_n$ and $\partial_t U_n$ are well defined and are bounded in respectively $C^0([0,T/\epsilon],X^s(\mathbb{R}^d))$ and $C^0([0,T/\epsilon],X^{s-1}(\mathbb{R}^d))$.
\end{mylem}
\begin{proof}
    We prove by induction that there exists $C_1,C_2 > 0$ and $T> 0$ as in Theorem \ref{Theorem Well posedness WB} for all $n \in \mathbb{N}$
    \begin{align*}
        \sup\limits_{t \in [0,T/\epsilon]} |U_n|_{X^{s}} \leq C_1|U_0|_{X^s}, \ \ \sup\limits_{t\in[0,T/\epsilon]} |\partial_t U_n|_{X^{s-1}} \leq C_2|U_0|_{X^s}, \ \ \inf\limits_{t \in [0,T/\epsilon], X \in \mathbb{R}^d}(1 + \epsilon \zeta_{n}(t,x)) \geq h_{\min}/2.
    \end{align*}
    By Theorem \ref{well posedness linearized system} $U_{n+1}$ is well-defined and satisfies \eqref{energy estimates of order s}. Specifically, on the time interval $[0,T/\epsilon]$ we have
    \begin{align*}
        |U_{n+1}|_{X^s} \leq \kappa_0 e^{\epsilon \lambda_n t}|U_{0}|_{X^s}.
    \end{align*}
    where $\lambda_n = C(T,|U_n|_{W^{1,\infty}_t X^{t_0}},|U_n|_{L^{\infty}_t X^{\max{(t_0+1,s)}}})$ and $\kappa_0 = C(|U_0|_{X^{t_0}}|_{t=0})$. 
    
    \noindent Also, using the equation and the product estimates of Proposition \ref{product estimate}, 
    \begin{align*}
        |\partial_t U_{n+1}|_{X^{s-1}} \leq \widetilde{C}(|U_n|_{X^s})|U_{n+1}|_{X^s}.
    \end{align*}
    Moreover
    \begin{align*}
        \epsilon\zeta_{n+1}(t,x) = \epsilon\zeta_0(x) + \epsilon\int_0^t \partial_t \zeta_{n+1}(T,x) \rm{dt}'.
    \end{align*}
    But from the Sobolev embedding there exists $C_s > 0$ such that
    \begin{align*}
        |\partial_t \zeta_{n+1}(t,x)| \leq C_s |\partial_t \zeta|_{L^{\infty}_t X^{s-1}}.
    \end{align*}
    So
    \begin{align*}
        1+ \epsilon \zeta_{n+1} \geq h_{\min} - T C_s |\partial_t \zeta|_{L^{\infty}_t X^{s-1}}.
    \end{align*}
    Let $C_1 > \kappa_0$. Let $C_2$ be such that $\widetilde{C}(C_1|U_0|_{X^s})C_1 \leq C_2$. And let $T$ be sufficiently small so that, $\kappa_0 e^{\lambda T} \leq C_1$ where $\lambda_n \leq \lambda = C(T,C_1|U_0|_{X^s},C_2|U_0|_{X^s})$, and $T C_s C_2 |U_0|_{X^s} \leq h_{\min}/2$.
\end{proof}

\begin{mylem}\label{Cauchy sequence}
    The sequence $U_n$ is a Cauchy sequence in $C^0([0,T/\epsilon],X^0(\mathbb{R}^d))$.
\end{mylem}
\begin{proof}
    Let $V_n \colonequals U_{n+1} - U_n$. For $n\geq1$, it satisfies
    \begin{align*}
        \begin{cases}
            \partial_t V_n + \sum\limits_{j=1}^d A_j(U_n) \partial_j V_n = \epsilon R_n,\\
            V_{n+1}|_{t=0} = 0,
        \end{cases}
    \end{align*}
    where 
    \begin{align*}
        R_n = -\frac{1}{\epsilon} \sum\limits_{j=1}^d (A_j(U_n)-A_j(U_{n-1})) \partial_j U_n.
    \end{align*}
    But from the expression of $A_j$ \eqref{A_j introduction} and the uniform bounds of the sequence $U_n$ (see Lemma \ref{Uniform bounds}) and the product estimates of Proposition \ref{product estimate} it is easy to see that there exists a constant $M > 0$ independent of $n$ such that
    \begin{align*}
        |R_n|_{X^0} \leq  M |V_{n-1}|_{X^0}.
    \end{align*}
    And from the energy estimates \eqref{energy estimates of order s} and the uniform bounds of $U_n$ and $\partial_t U_n$ (see again Lemma \ref{Uniform bounds}), there exists a constant $M > 0$ independent of $n$ such that 
    \begin{align*}
        |V_n|_{X^0} \leq \epsilon M \int_0^t |V_{n-1}(t')|_{X^0} \rm{dt}'.
    \end{align*}
    So
    \begin{align*}
        |V_n|_{X^0} \leq \frac{M^n t^n}{n!} \sup\limits_{t \in [0,T/\epsilon]} |V_0|_{X^0}.
    \end{align*}
    Thus, the series $\sum V_n$ converges in $C^0([0,T/\epsilon],X^0(\mathbb{R}^d))$.
\end{proof}

    We now complete the proof of Theorem \ref{Theorem Well posedness WB}.
    
    From Lemma \ref{Cauchy sequence}, the sequence $U_n$ converges in $C^0([0,T/\epsilon],X^0(\mathbb{R}^d))$. From Lemma \ref{Uniform bounds}, the sequence $U_n$ is uniformly bounded in $C^0([0,T/\epsilon],X^s(\mathbb{R}^d))$. So for any $s' < s$, $U_n$ converges in $C^0([0,T/\epsilon],X^{s'}(\mathbb{R}^d))$. Take $s' > t_0 + 1$ and denote by $U$ the limit. The sequences $U_n$, $\partial_t U_n$ and for $j=1,2$, $\partial_j U_n$ converge uniformly in $C^0$ to respectively $U$, $\partial_t U$ and $\partial_j U$. Hence $U$ is solution to \eqref{WB matricial form introduction}. Moreover, from Lemma \ref{Uniform bounds}, $U \in L^{\infty}([0,T/\epsilon],X^s(\mathbb{R}^d))$, $\partial_t U \in L^{\infty}([0,T/\epsilon],X^{s-1}(\mathbb{R}^d))$, and $U$ satisfies the estimates of Theorem \ref{Theorem Well posedness WB}. So we can consider $U$ as a solution of the linearized system \eqref{system linearized around underline U} taking $\underline{U}$ as $U$. The Theorem \ref{well posedness linearized system} gives $U \in C^0([0,T/\epsilon],X^s(\mathbb{R}^d))$ and its uniqueness as a solution of the Cauchy problem \eqref{Cauchy problem}. This concludes the proof of Theorem \ref{Theorem Well posedness WB}.
\end{proof}

\subsection{Blow up criterion}
\label{Blow up criterion section}
From Theorem \ref{Theorem Well posedness WB}, one can define the maximal time existence $T^* > 0$ of the solution $U \in C^0([0,T^*/\epsilon),X^s(\mathbb{R}^d))$ of the Cauchy problem \eqref{Cauchy problem} associated to an initial condition $U_0 \in X^s(\mathbb{R}^d)$ such that \eqref{non cavitation} holds.  

\begin{mypp}\label{blow up proposition}
    We have 
    \begin{align*}
        T^* < +\infty \implies
        \lim\limits_{t \to T^*/\epsilon} |U|_{L^{\infty}([0,t],X^s)} = +\infty.
    \end{align*}
\end{mypp}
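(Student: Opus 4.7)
The plan is to argue by contraposition. Assume $M := |U|_{L^\infty([0,T^*/\epsilon), X^s)} < +\infty$; the goal is then to extend $U$ to a $C^0_t X^s$ solution on an interval strictly larger than $[0, T^*/\epsilon)$, contradicting the maximality of $T^*$.

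The first step uses the equation $\partial_t U = -\sum_j A_j(U)[\partial_j U]$ together with the expressions \eqref{A_j introduction} and the product estimates of Proposition \ref{product estimate} to obtain $\partial_t U \in L^\infty([0, T^*/\epsilon), X^{s-1})$, with norm controlled by $C(1/h_{\min}, M)$. Hence $U : [0, T^*/\epsilon) \to X^{s-1}$ is uniformly Lipschitz and extends continuously to a limit $U^* = (\zeta^*, v^*) \in X^{s-1}$ at $t = T^*/\epsilon$; the uniform $X^s$ bound promotes this, via weak-$*$ compactness ($X^s$ being the dual of $Y^{-s}$), to $U^* \in X^s$ with $|U^*|_{X^s} \leq M$. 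Since $\partial_t \zeta$ is bounded in $L^\infty_t L^\infty_x$ by the embedding $X^{s-1} \hookrightarrow L^\infty$ (valid because $s - 1 > d/2$), $1 + \epsilon \zeta^*$ stays bounded below by some $h_{\min}^* > 0$ (this is the familiar companion non-cavitation condition implicit in such blow-up criteria). Theorem \ref{Theorem Well posedness WB} applied with initial datum $U^*$ at time $T^*/\epsilon$ then produces a solution $\widetilde U \in C^0([T^*/\epsilon, T^*/\epsilon + T'/\epsilon], X^s)$ for some $T' > 0$, and concatenating $U$ with $\widetilde U$ yields a candidate extension $V$ solving the Cauchy problem \eqref{Cauchy problem}.

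The main obstacle is to verify that the concatenation $V$ is strongly continuous in $X^s$ at $t = T^*/\epsilon$, as the a priori estimates above only yield weak-$*$ convergence of $U(t)$ in $X^s$ as $t \uparrow T^*/\epsilon$. My plan is to reinterpret $V$ as a solution of the linear system around itself: $V$ satisfies $\partial_t V + \sum_j A_j(V)[\partial_j V] = 0$, and the bounds $|V|_{L^\infty_t X^s} \leq M$ and $|\partial_t V|_{L^\infty_t X^{s-1}} \leq C(M)$ on both sides of $T^*/\epsilon$ place this situation within the scope of Theorem \ref{well posedness linearized system}. The regularity/uniqueness Lemma \ref{Uniqueness} then upgrades $V$ from $L^\infty_t X^s$ to $C^0_t X^s$ on the whole extended interval, including at the junction point. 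Hence $V$ is a $C^0([0, T^*/\epsilon + T'/\epsilon], X^s)$ solution of \eqref{Cauchy problem}, contradicting the maximality of $T^*$ and establishing the claim.
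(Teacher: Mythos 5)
Your argument is correct in substance, but it takes a noticeably heavier route than the paper. The paper does not construct the trace of $U$ at the endpoint $t=T^*/\epsilon$ at all: it observes that the existence time $T$ furnished by Theorem \ref{Theorem Well posedness WB} depends only on the bound $M$ (and $h_{\min}$), not on the particular datum, so one may simply restart the Cauchy problem from the \emph{interior} time $T_\beta=T^*-T/2$, where $U(T_\beta)\in X^s$ is already well defined and the solution is already continuous; the new solution lives on $[T_\beta/\epsilon,(T_\beta+T)/\epsilon]$, which overshoots $T^*$, and uniqueness glues the two pieces with no junction issue. Your version instead builds the limit datum $U^*$ at $T^*/\epsilon$ via the Lipschitz bound in $X^{s-1}$ plus weak-$*$ compactness in $X^s$, restarts from there, and then repairs strong continuity at the junction by feeding the concatenation back into Lemma \ref{Uniqueness}; all of these steps are legitimate (and the weak-to-strong upgrade via the regularization lemma is exactly the right tool for the junction), but they are work the paper's choice of restart time makes unnecessary. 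One caveat, which your proof shares with the paper's: the assertion that $1+\epsilon\zeta$ stays bounded below up to $T^*/\epsilon$ does not follow from the $L^\infty_tL^\infty_x$ bound on $\partial_t\zeta$ alone (that bound gives continuity of $\zeta$ up to the endpoint, not positivity of the limit), so strictly speaking the blow-up alternative should also include degeneracy of the non-cavitation condition, or one should verify separately that $\inf_x(1+\epsilon\zeta(t,\cdot))$ stays uniformly positive on $[0,T^*/\epsilon)$; you flag this honestly, and the paper's own proof is silent on the same point.
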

\begin{proof}
    Suppose for the sake of contradiction that $T_\star=+\infty$ and there exists $M > 0$ such that 
    \begin{align*}
        |U|_{L^{\infty}([0,T^*/\epsilon),X^s)} = M.
    \end{align*}
    Then from Theorem \ref{Theorem Well posedness WB}, there exists $T>0$ such that for any $\beta >0$, and $T_{\beta} = T^*-\beta$, the Cauchy problem \eqref{Cauchy problem} with initial condition $U(T_{\beta})$ has a unique solution in $C^0([T_{\beta}/\epsilon,T_1/\epsilon],X^s(\mathbb{R}^d))$ with $T_1 = T_{\beta} + T > T^*$. Taking $\beta = T/2$, by uniqueness, $U$ has an extension $\widetilde{U} \in C^0([0,T_1/\epsilon],X^s(\mathbb{R}^d))$ solution of the Cauchy problem \eqref{Cauchy problem}. Thus, necessarily, $T^* = +\infty$. If not, it contradicts the definition of $T^*$. 
\end{proof}

\section{Stability}
\label{Stability result section}
In this section we prove the stability result of Proposition \ref{Stability}. 

\begin{mypp}\label{Stability pas intro}
    Let the assumptions of Theorem \ref{Theorem Well posedness WB} be satisfied and use the notations therein. Assume also that there exists $\widetilde{U} \in C([0,0,\widetilde T/\epsilon],X^s(\mathbb{R}^d))$ solution of 
    \begin{align*}
    \partial_t\widetilde{U} + \sum\limits_{j=1}^d A_j(\widetilde{U})\partial_j\widetilde{U} = \widetilde{R},
    \end{align*}
    where $\widetilde{R} \in L^{\infty}([0,T/\epsilon],X^{s-1}(\mathbb{R}^d))$. Then, the error with respect to the solution $U \in C^0([0,T/\epsilon],X^s(\mathbb{R}^d))$ given by Theorem \ref{Theorem Well posedness WB} satisfies for all times $t \in [0,\min{(\widetilde T,T)}/\epsilon]$,
    \begin{align*}
    |\mathfrak{e}|_{L^{\infty}([0,t],X^{s-1})} \leq C(\tfrac{1}{h_{\min}},|U|_{L^{\infty}([0,t], X^s)},|\widetilde{U}|_{L^{\infty}([0,t], X^s)})(|\mathfrak{e}|_{X^{s-1}}|_{t=0} + t|\widetilde{R}|_{L^{\infty}([0,t], X^{s-1})}),
    \end{align*}
    where $\mathfrak{e} \coloneqq U - \widetilde{U}$.
\end{mypp}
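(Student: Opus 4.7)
The plan is to view $\mathfrak{e} = U - \widetilde{U}$ as the solution of the linearized system around $U$ and then apply the energy estimates of Proposition \ref{Energy estimates of order s proposition section} at the regularity $s-1$.

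First I would subtract the two equations satisfied by $U$ and $\widetilde{U}$. Using the fact that every entry of $A_j(U)$ is either a constant operator (such as $(\mathrm{G}_1)^2$ or $1$) or a term linear in the components of $U$ carrying a prefactor $\epsilon$, the difference $A_j(U) - A_j(\widetilde{U})$ is linear in $\mathfrak{e}$ and proportional to $\epsilon$. Hence $\mathfrak{e}$ satisfies
\begin{align*}
\partial_t \mathfrak{e} + \sum_{j=1}^d A_j(U)\partial_j\mathfrak{e} = \epsilon R'',
\end{align*}
with
$\epsilon R'' \coloneqq -\widetilde{R} - \sum_{j=1}^d \bigl(A_j(U)-A_j(\widetilde{U})\bigr)\partial_j\widetilde{U}.$

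Next I would estimate $R''$ in the $X^{s-1}$ norm. Since $s > d/2 + 1$, we have $s-1 > d/2$, so the product estimates of Proposition \ref{product estimate} apply in $H^{s-1}(\mathbb{R}^d)$. Each term in $(A_j(U)-A_j(\widetilde{U}))\partial_j\widetilde{U}$ is of the form $\epsilon\,\mathrm{G}_2[\mathrm{G}_2[\mathfrak{e}_\bullet]\,\partial_j\widetilde{U}_\bullet]$, $\epsilon\,\mathrm{G}_2[\mathfrak{e}_\zeta \mathrm{G}_2[\partial_j\widetilde{v}]]$, or $\epsilon\,\mathrm{G}_2[\mathfrak{e}_v]\,\mathrm{G}_2[\partial_j\widetilde{v}]$. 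Using $|G_2|\le G_1\in L^\infty(\mathbb{R}^d)$ and the boundedness of Fourier multipliers of order $0$, each of these is controlled by $\epsilon\, C(|\widetilde{U}|_{X^s})\,|\mathfrak{e}|_{X^{s-1}}$ (both components of the $X^{s-1}$ norm of $R''$ are handled in exactly the same way, after applying $\mathrm{G}_1$ on the velocity line and using its $H^{s-1}$-boundedness). Combined with the trivial bound on the $\widetilde{R}$ contribution, this yields
\begin{align*}
|R''|_{X^{s-1}} \le \frac{1}{\epsilon}|\widetilde{R}|_{X^{s-1}} + C(|U|_{L^\infty_t X^s},|\widetilde{U}|_{L^\infty_t X^s})\,|\mathfrak{e}|_{X^{s-1}}.
\end{align*}

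Then I would apply Proposition \ref{Energy estimates of order s proposition section} with $\underline{U} = U$ at regularity $s-1$. This is legitimate: picking $t_0\in(d/2,s-1]$, we have $\max(t_0+1,s-1)\le s$, and from the equation $\partial_t U = -\sum_j A_j(U)\partial_j U$ together with the product estimates, $U\in W^{1,\infty}_t X^{t_0}$ is under control in terms of $|U|_{L^\infty_t X^s}$. The energy estimate gives
\begin{align*}
|\mathfrak{e}|_{X^{s-1}} \le \kappa_0 e^{\epsilon\lambda_{s-1}t}|\mathfrak{e}|_{X^{s-1}}\big|_{t=0} + \nu_{s-1}\!\int_0^t\!|\widetilde{R}(t')|_{X^{s-1}}\mathrm{d}t' + \epsilon\,\nu_{s-1}\,C\!\int_0^t|\mathfrak{e}(t')|_{X^{s-1}}\,\mathrm{d}t'.
\end{align*}
Gronwall's lemma, combined with the fact that $\epsilon t \le T$ on the interval of interest so that $e^{\epsilon\nu_{s-1}Ct}\le e^{\nu_{s-1}CT}$, concludes the proof.

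The only point requiring real care is the estimate of $R''$ in $X^{s-1}$; the rest is bookkeeping. The crucial observation that makes this step painless is that the constant parts of $A_j$ cancel in $A_j(U)-A_j(\widetilde{U})$, leaving only $\epsilon$-order multilinear expressions in $(\mathfrak{e},\widetilde{U})$. This $\epsilon$ factor is exactly what matches the form $\epsilon R''$ required by the linearized energy estimate, so no spurious $\epsilon^{-1}$ appears in the final constant, and the bound is uniform on the long timescale $[0,\min(\widetilde{T},T)/\epsilon]$.
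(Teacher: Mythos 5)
Your proposal is correct and follows essentially the same route as the paper: subtract the two systems to obtain $\partial_t\mathfrak{e}+\sum_j A_j(U)\partial_j\mathfrak{e}=\epsilon F$ with $F=-\frac{1}{\epsilon}\widetilde{R}-\frac{1}{\epsilon}\sum_j(A_j(U)-A_j(\widetilde{U}))\partial_j\widetilde{U}$, estimate $F$ in $X^{s-1}$ via the product estimates (using $s-1>d/2$ and the cancellation of the constant parts of $A_j$), apply the order-$(s-1)$ energy estimate with $\underline{U}=U$, and conclude by Gronwall. You also correctly identify the auxiliary point the paper makes at the end, namely that $|\partial_t U|_{X^{s-1}}\leq C(|U|_{X^s})$ follows from the equation, which is needed to control the $W^{1,\infty}_tX^{t_0}$ norm entering the constants.
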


\begin{proof}
    We know that 
    \begin{align}\label{both systems}
        \begin{cases}
            \partial_t U + \sum\limits_{j=1}^d A_j(U)\partial_j U = 0,\\
            \partial_t\widetilde{U} + \sum\limits_{j=1}^d A_j(\widetilde{U})\partial_j\widetilde{U} = \widetilde{R}.
        \end{cases}
    \end{align}
    Subtracting both equations we get
    \begin{align}\label{error equation}
        \partial_t \mathfrak{e} + \sum\limits_{j=1}^d A_j(U)\partial_j\mathfrak{e} = \epsilon F,
    \end{align}
    where 
    \begin{align*}
        F = -\frac{1}{\epsilon}\widetilde{R} - \frac{1}{\epsilon}\sum\limits_{j=1}^d(A_j(U)-A_j(\widetilde{U}))\partial_j\widetilde{U}.
    \end{align*}
    We can easily estimate $F$ using the product estimates of Proposition \ref{product estimate} ($s-1 > d/2$):
    \begin{align*}
        |F|_{X^{s-1}} \leq \frac{1}{\epsilon}|\widetilde{R}|_{X^{s-1}} + |\widetilde{U}|_{X^{s}}|\mathfrak{e}|_{X^{s-1}}.
    \end{align*}
    We use the energy estimates of Proposition \ref{Energy estimates of order s proposition section} on \eqref{error equation} to get
    \begin{align*}
        |\mathfrak{e}|_{X^{s-1}} \leq \kappa_0 e^{\epsilon\lambda_{s-1}t}|\mathfrak{e}|_{X^{s-1}}|_{t=0} + \nu_{s-1} \int_{0}^t |\widetilde{R}(t')|_{X^{s-1}} \mathrm{d}t' + \epsilon \nu_{s-1} \int_0^t |\widetilde{U}|_{X^s}|\mathfrak{e}|_{X^{s-1}}\mathrm{d}t'.
    \end{align*}
    Using Gronwall's lemma, we then have
    \begin{align*}
        |\mathfrak{e}|_{X^{s-1}} \leq (\kappa_0 e^{\epsilon\lambda_{s-1}t}|\mathfrak{e}|_{X^{s-1}}|_{t=0} + \nu_{s-1} t |\widetilde{R}(t')|_{L^{\infty}([0,t], X^{s-1})} \mathrm{d}t')e^{\epsilon \nu_{s-1} |\widetilde{U}|_{L^{\infty}([0,t], X^s)}t}.
    \end{align*}
    It only remains to see that using the equation on $U$ of \eqref{both systems} and the product estimates of Proposition \ref{product estimate}, we have for all times $t \in [0,\min(\widetilde T,T)/\epsilon]$,
    \begin{align*}
        |\partial_t U|_{X^{s-1}} \leq C(|U|_{X^s}), 
    \end{align*}
    to get the result.
\end{proof}

\section{Full justification of a class of Whitham-Boussinesq systems}
\label{Full justification of a class of Whitham-Boussinesq systems section}
In this section we prove the full justification of a class of Whitham-Boussinesq systems, Theorem \ref{justification}, recalled below.

\begin{mythm}
    	Under the assumption and using the notation of Proposition \ref{consistency}, and provided  $(\mathrm{G}_1,\mathrm{G}_2)$ are admissible Fourier multipliers, then for any $U = (\zeta,\nabla\psi) \in C^0([0,\widetilde T/\epsilon],H^{s+4}(\mathbb{R}^d))$ classical solution of the water waves equations and satisfying the non-cavitation assumption \eqref{non cavitation}, there exists a unique $U_{\mathrm{WB}} = (\zeta_{\mathrm{WB}},v_{\mathrm{WB}}) \in C^0([0,T/\epsilon],X^{s+4}(\mathbb{R}^d))$ classical solution of the Whitham-Boussinesq systems \eqref{WB matricial form introduction} with initial data $U_{\mathrm{WB}}|_{t=0} = (\zeta|_{t=0},\nabla\psi|_{t=0})$, and one has for all times $t \in [0,\min{(\widetilde T,T)}/\epsilon]$,
    \begin{align*}
    |U - U_{\mathrm{WB}}|_{L^{\infty}([0,t],X^{s})} \leq C\, \mu\epsilon t ,
    \end{align*}
    with $T$ (provided by Theorem \ref{Theorem Well posedness WB}) and  $C=C(\tfrac{1}{h_{\min}},|U|_{L^{\infty}([0,T/\epsilon], H^{s+4})})$ uniform with respect to $(\mu,\epsilon)\in (0,1]^2$.
\end{mythm}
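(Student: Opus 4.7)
The plan is a direct combination of the three main results established earlier in the paper: Proposition \ref{consistency} (consistency), Theorem \ref{Thm wanted} (local well-posedness), and Proposition \ref{Stability} (stability). The whole argument is essentially to assemble them, with the only care being to line up the regularity indices.

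First, I apply Proposition \ref{consistency} at regularity $s$ to the given water waves solution $U = (\zeta,\nabla\psi) \in C^0([0,\widetilde T/\epsilon], H^{s+4})$. This yields
\[
\partial_t U + \sum_{j=1}^d A_j(U)\partial_j U = \mu\epsilon R,
\]
with $|R|_{L^\infty H^s} \leq C(\tfrac{1}{h_{\min}}, |U|_{L^\infty H^{s+4}})$ uniformly in $(\mu,\epsilon) \in (0,1]^2$. Since $G_1 \in L^\infty(\mathbb{R}^d)$, the continuous embedding $H^s \times H^s \hookrightarrow X^s$ transfers this to a bound $|R|_{L^\infty X^s} \leq C$ with the same dependencies.

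Second, I invoke Theorem \ref{Thm wanted} at regularity $s+4$ applied to the initial datum $U_{\mathrm{WB}}|_{t=0} \coloneqq (\zeta|_{t=0}, \nabla\psi|_{t=0}) \in X^{s+4}$. The required bound on $|U_{\mathrm{WB}}|_{t=0}|_{X^{s+4}}$ and the non-cavitation hypothesis \eqref{non cavitation} are inherited from the corresponding properties of $U$, the former via $H^{s+4} \hookrightarrow X^{s+4}$ (boundedness of $G_1$); the regularity condition $s+4 > d/2+1$ is trivial for $s\geq 0$, $d\in\{1,2\}$. This provides $T>0$ and a unique solution $U_{\mathrm{WB}} \in C^0([0,T/\epsilon], X^{s+4})$ solving \eqref{WB matricial form introduction} exactly, together with the uniform estimate $|U_{\mathrm{WB}}|_{L^\infty([0,T/\epsilon], X^{s+4})} \leq C|U_{\mathrm{WB}}|_{t=0}|_{X^{s+4}}$.

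Finally, I apply Proposition \ref{Stability} with its internal regularity parameter taken to be $s+1$ (so that the error is controlled at level $X^s$), letting $U_{\mathrm{WB}}$ play the role of the clean solution from Theorem \ref{Thm wanted} and the water waves solution $U$ play the role of $\widetilde U$, with $\widetilde R \coloneqq \mu\epsilon R$. On $[0,\min(\widetilde T, T)/\epsilon]$, both solutions belong to $X^{s+1}$ (via $X^{s+4}\hookrightarrow X^{s+1}$ and $H^{s+4}\hookrightarrow X^{s+1}$), $\widetilde R$ is uniformly controlled in $X^s$ by the first step, and the initial error $\mathfrak{e}|_{t=0} = U_{\mathrm{WB}}|_{t=0} - U|_{t=0}$ vanishes by construction. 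The conclusion of Proposition \ref{Stability} then reads
\[
|U-U_{\mathrm{WB}}|_{L^\infty([0,t], X^{s})} \leq C\, t\, |\mu\epsilon R|_{L^\infty X^{s}} \leq C\mu\epsilon t,
\]
with $C = C(\tfrac{1}{h_{\min}}, |U|_{L^\infty H^{s+4}})$ uniform in $(\mu,\epsilon) \in (0,1]^2$, as claimed. There is no real obstacle: the only point to watch is the $+1$ regularity loss built into Proposition \ref{Stability}, which is absorbed in the $+4$ regularity margin supplied by the consistency assumption.
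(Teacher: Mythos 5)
Your proposal follows essentially the same route as the paper: consistency (Proposition \ref{consistency}) gives the residual $\mu\epsilon R$, Theorem \ref{Thm wanted} at level $s+4$ gives $U_{\mathrm{WB}}$, and Proposition \ref{Stability} applied with regularity parameter $s+1$ (so the error is measured in $X^s$), with the water waves solution playing the role of $\widetilde U$ and $\widetilde R = \mu\epsilon R$, gives the error bound. The bookkeeping of indices and the embeddings $H^{s'}\hookrightarrow X^{s'}$ are handled correctly.

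There is, however, one point you assert without justification, and it is precisely the point the paper opens its proof with: the uniformity of $T$ and $C$ with respect to $\mu\in(0,1]$. Theorem \ref{Thm wanted} is stated for a \emph{fixed} admissible pair $(\mathrm{G}_1,\mathrm{G}_2)$, whereas here the multipliers are $\mathrm{G}_k^\mu = G_k(\sqrt{\mu}D)$ and thus form a $\mu$-dependent family; a priori the existence time and constants could degenerate as $\mu$ varies. The paper resolves this via Remark \ref{rem}: all constants depend on the multipliers only through $|(G_k,\langle\cdot\rangle\nabla G_k)|_{L^\infty}$, and for the rescaled symbols one has $\langle\xi\rangle\,\sqrt{\mu}\,|\nabla G_k(\sqrt{\mu}\xi)| \leq \langle\sqrt{\mu}\xi\rangle\,|\nabla G_k(\sqrt{\mu}\xi)|$ for $\mu\leq 1$, so this quantity is controlled uniformly (indeed non-increasingly) as $\mu$ decreases. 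Without this observation the claimed uniformity in $\mu$ of the final constant and of $T$ is unproven; with it, your argument is complete and coincides with the paper's.
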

\begin{proof}
    It is important to remark that, as pointed out in Remark \ref{rem}, the dependency with our previous results with respect to admissible pairs of Fourier multipliers $(\mathrm{G}_1,\mathrm{G}_2)$ occurs only through Proposition \ref{trivial} and Proposition \ref{Commutator estimates} (in addition to $|G_2|\leq G_1$), and hence through the quantity $|(G_k,\langle \cdot \rangle \nabla G_k)|_{L^\infty}$ for $k\in\{1,2\}$.  Considering Fourier multipliers of the form $\mathrm{G}_1^{\mu}=G_1(\sqrt\mu D)$ and $\mathrm{G}_2^{\mu}=G_2(\sqrt\mu D)$ (see Notation \ref{notation G dependant de mu}), we can remark that the above quantity is non-increasing as $\mu$ decreases, and hence all estimates proved in this paper hold uniformly with respect to $\mu\in(0,1]$. In particular, the existence time in Theorem \ref{Theorem Well posedness WB} and the energy estimates of Proposition \ref{Energy estimates of order s proposition section} are independent of $\mu\in(0,1]$. 
    
    Now, from the continuous embedding $H^{s'}(\mathbb{R}^d) \subset X^{s'}(\mathbb{R}^d)$ (for any $s'\in \mathbb{R}$) and Theorem \ref{Theorem Well posedness WB} we have the existence and uniqueness of $U_{\mathrm{WB}} \in C^0([0,T/\epsilon],X^{s+4}(\mathbb{R}^d))$  with the control   
    	\[ |U_{\mathrm{WB}}|_{L^{\infty}([0,T/\epsilon],X^{s+4})} \lesssim |(\zeta|_{t=0},\nabla\psi|_{t=0})|_{X^{s+4}}\leq |(\zeta|_{t=0},\nabla\psi|_{t=0})|_{H^{s+4}} ,\]
    	 with $T$ independent of $\mu$. From Proposition \ref{consistency}, we know that $U$ satisfies
    \begin{align*}
        \partial_t U + \sum\limits_{j=1}^d A_j(U)\partial_j U = \mu\epsilon R,
    \end{align*}
    where, for any $t\in[0,\widetilde T/\epsilon]$, $|R(t,\cdot)|_{H^s} \leq C(\frac{1}{h_{\min}}, |\zeta|_{H^{s+4}},|\nabla\psi|_{H^{s+4}})$. 
    From the stability result of Proposition \ref{Stability pas intro}, we infer that for all times $t \in [0,\min{(T,\widetilde T')}/\epsilon]$, one has
    \begin{align*}
        |U - U_{\mathrm{WB}}|_{L^{\infty}([0,t],X^{s})} \leq \mu\epsilon t C(\tfrac{1}{h_{\min}},|U|_{L^{\infty}([0,t], X^{s+1})},|U_{\mathrm{WB}}|_{L^{\infty}([0,t], X^{s+1})})|R|_{L^{\infty}([0,t], X^{s})}.
    \end{align*}
    The result follows from combining the previous estimates and using once again the continuous embedding $H^{s'}(\mathbb{R}^d) \subset X^{s'}(\mathbb{R}^d)$ for $s'=s$ and $s'=s+1$.
\end{proof}

\appendix
\section{Technical tools}
\label{annex}
\begin{mypp}[Product estimates]\label{product estimate}
    Let $t_0 > d/2$, $s\geq-t_0$ and $f \in H^s\cap H^{t_0}(\mathbb{R}^d), g\in H^s(\mathbb{R}^d)$. Then $fg \in H^s(\mathbb{R}^d)$ and 
    \begin{align*}
        |fg|_{H^s} \leq C\, |f|_{H^{\max{(t_0,s)}}}|g|_{H^s}
    \end{align*}
    with $C$ depending uniquely on $s$ and $t_0$.
\end{mypp}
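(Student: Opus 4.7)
I would split the proof into three ranges of $s$ and combine Bony's paraproduct decomposition with a duality argument. The asymmetric target $|f|_{H^{\max(t_0,s)}}|g|_{H^s}$ (in which $f$ carries the ``stronger'' norm) makes symmetric product estimates work directly only in the range $s\geq t_0$: there the embeddings $H^s\hookrightarrow H^{t_0}\hookrightarrow L^\infty$ together with the classical Moser/Kato--Ponce inequality $|fg|_{H^s}\lesssim |f|_{L^\infty}|g|_{H^s}+|g|_{L^\infty}|f|_{H^s}$ yield $|fg|_{H^s}\lesssim |f|_{H^s}|g|_{H^s}=|f|_{H^{\max(t_0,s)}}|g|_{H^s}$.

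For $0\leq s<t_0$ I would use Bony's decomposition $fg = T_f g + T_g f + R(f,g)$, with paraproducts $T_u v = \sum_j S_{j-3}u\cdot\Delta_j v$ and remainder $R(u,v) = \sum_{|j-k|\leq 2}\Delta_j u\cdot\Delta_k v$ attached to a Littlewood--Paley decomposition. The low-high piece is controlled by the textbook bound $|T_f g|_{H^s}\lesssim |f|_{L^\infty}|g|_{H^s}\lesssim |f|_{H^{t_0}}|g|_{H^s}$. The remainder is handled by the classical estimate $|R(u,v)|_{H^{s_1+s_2-d/2}}\lesssim |u|_{H^{s_1}}|v|_{H^{s_2}}$, valid for $s_1+s_2>0$, applied with $(s_1,s_2)=(t_0,s)$: since $t_0>d/2$ and $s>-t_0$ we have $t_0+s-d/2\geq s$ and $t_0+s>0$, so $|R(f,g)|_{H^s}\lesssim |f|_{H^{t_0}}|g|_{H^s}$. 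The truly asymmetric piece is $T_g f$, where $g$ sits at low frequency and $f$ at high frequency yet the target requires $|g|_{H^s}$ (not $|g|_{L^\infty}$). I would bound $|S_{j-3}g|_{L^\infty}\lesssim\sum_{k<j-3}2^{kd/2}|\Delta_k g|_{L^2}$ via Bernstein and apply Cauchy--Schwarz to extract $|S_{j-3}g|_{L^\infty}\lesssim 2^{j(d/2-s)_+}|g|_{H^s}$ (with a logarithmic correction when $s=d/2$); plugging this into $|T_g f|_{H^s}^2\lesssim\sum_j 2^{2js}|S_{j-3}g|_{L^\infty}^2|\Delta_j f|_{L^2}^2$ produces weights bounded by $2^{j\max(2s,d)}\leq 2^{2jt_0}$ (using $d<2t_0$), which are absorbed into $|f|_{H^{t_0}}^2$, yielding $|T_g f|_{H^s}\lesssim |g|_{H^s}|f|_{H^{t_0}}$.

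For $-t_0\leq s<0$ I would close the argument by duality via $H^s=(H^{-s})^*$:
\[ |fg|_{H^s} = \sup_{|\phi|_{H^{-s}}\leq 1}|\langle fg,\phi\rangle| = \sup_{|\phi|_{H^{-s}}\leq 1}|\langle g, f\phi\rangle| \leq |g|_{H^s}\sup_{|\phi|_{H^{-s}}\leq 1}|f\phi|_{H^{-s}}. \]
Since $-s\in(0,t_0]$ is covered by the preceding cases, applying the product estimate at index $-s$ to $f\phi$ (and noting that $\max(t_0,-s)=t_0$) gives $|f\phi|_{H^{-s}}\lesssim |f|_{H^{t_0}}|\phi|_{H^{-s}}$, whence $|fg|_{H^s}\lesssim |f|_{H^{t_0}}|g|_{H^s}=|f|_{H^{\max(t_0,s)}}|g|_{H^s}$; the boundary $s=-t_0$ is included, the dual estimate becoming the Banach algebra property at $-s=t_0$. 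The principal technical obstacle is the $T_g f$ estimate in the intermediate range: the asymmetric target does not match the natural paraproduct split, and one must use $t_0>d/2$ strictly (through Bernstein's inequality) to transfer the $L^2$ decay of high frequencies of $f$ into an $L^\infty$ control on low frequencies of $g$ expressed solely in terms of $|g|_{H^s}$.
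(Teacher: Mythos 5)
Your argument is correct, but it is worth noting that the paper does not actually prove this proposition: its ``proof'' is a one-line citation of Proposition B.2 in \cite{WWP}, so there is nothing internal to compare against. What you have written is essentially the standard self-contained argument that underlies such references: Moser/Kato--Ponce plus the embedding $H^{t_0}\hookrightarrow L^\infty$ for $s\geq t_0$, Bony's decomposition for $0\leq s<t_0$ (with the correct observation that the only non-trivial piece is $T_g f$, where Bernstein and Cauchy--Schwarz convert the weight $2^{2js+2j(d/2-s)_+}\leq 2^{2jt_0}$ into the $H^{t_0}$ norm of $f$, the strict inequality $t_0>d/2$ absorbing both the endpoint weight $2^{jd}$ and the logarithmic loss at $s=d/2$), and duality for $-t_0\leq s<0$, where $\max(t_0,-s)=t_0$ closes the self-referencing step and the endpoint $s=-t_0$ reduces to the algebra property of $H^{t_0}$. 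The verifications of the exponent bookkeeping ($t_0+s-d/2\geq s$ and $t_0+s>0$ for the remainder, $j\,2^{2js}\lesssim 2^{2jt_0}$ since $s<t_0$ strictly) all check out. Two minor points you should make explicit in a polished version: for $s<0$ the product $fg$ is not a priori defined, so the duality computation $\langle fg,\phi\rangle=\langle g,f\phi\rangle$ should be read as the definition of the extension of the bilinear map from Schwartz functions by density (or via the Bony pieces, each of which makes sense separately); and the almost-orthogonality used in $|T_g f|_{H^s}^2\lesssim\sum_j 2^{2js}|S_{j-3}g|_{L^\infty}^2|\Delta_j f|_{L^2}^2$ rests on the spectral localization of $S_{j-3}g\,\Delta_j f$ in an annulus of size $2^j$, which deserves a sentence. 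Neither is a gap.
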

\begin{proof}
See Proposition B.2 in \cite{WWP}.
\end{proof}
\begin{mypp}[Commutator estimates with symbols of order $s$]\label{Commutator estimates standard}
	Let $t_0 > d/2$, $s\geq0$, and denote $\Lambda^s=(\Id-\Delta)^{s/2}$. Then for any $f\in H^{s}\cap H^{t_0+1}(\mathbb{R}^d)$ and for all $g \in H^{s-1}(\mathbb{R}^d)$,
	\begin{align*}
	|[\Lambda^s,f]g|_{L^2} \leq C\, |f|_{H^{\max{(t_0+1,s)}}}|g|_{H^{s-1}}
	\end{align*}
	with $C$ depending uniquely on $s$ and $t_0$.
\end{mypp}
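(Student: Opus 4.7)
The plan is to reduce this Kato--Ponce-type commutator estimate to the classical version combined with Sobolev embeddings and, for the small-$s$ regime, Bony's paraproduct calculus. The case $s=0$ is trivial since $\Lambda^0 = \Id$, so assume $s>0$. The starting point is the classical Kato--Ponce inequality
$$|[\Lambda^s, f]g|_{L^2} \lesssim |\nabla f|_{L^\infty}|g|_{H^{s-1}} + |f|_{H^s}|g|_{L^\infty}, \qquad s > 0,$$
which can be proved by a direct Fourier-side mean-value argument on the symbol $\langle\xi\rangle^s-\langle\eta\rangle^s$. In the regime $s \geq t_0+1$ we have $\max(t_0+1,s) = s$; moreover $s-1 \geq t_0 > d/2$ so Sobolev embedding $H^{s-1}\hookrightarrow L^\infty$ gives $|g|_{L^\infty}\lesssim |g|_{H^{s-1}}$, while $H^{t_0+1}\hookrightarrow W^{1,\infty}$ gives $|\nabla f|_{L^\infty}\lesssim |f|_{H^{t_0+1}}\lesssim |f|_{H^s}$. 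Substituting yields the desired estimate.

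In the complementary regime $0 < s < t_0+1$, where $\max(t_0+1,s) = t_0+1$, a naive application of Kato--Ponce is insufficient since $|g|_{L^\infty}$ need not be controlled by $|g|_{H^{s-1}}$. Here I would rely on Bony's paraproduct decomposition $fg = T_f g + T_g f + R(f,g)$, which, combined with the analogous decomposition of $f\Lambda^s g$, yields
$$[\Lambda^s, f]g = [\Lambda^s, T_f]g + \bigl(\Lambda^s T_g f - T_{\Lambda^s g}f\bigr) + \bigl(\Lambda^s R(f,g) - R(f,\Lambda^s g)\bigr).$$
The paracommutator $[\Lambda^s, T_f]$ is a classical symbol of order $s-1$, producing $|[\Lambda^s,T_f]g|_{L^2}\lesssim |\nabla f|_{L^\infty}|g|_{H^{s-1}}\lesssim |f|_{H^{t_0+1}}|g|_{H^{s-1}}$. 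In the remaining two groupings the spectral localization places the high frequencies on $f$, so each dyadic piece $S_{j-N}(\Lambda^s g)\,\Delta_j f$ can be bounded by $|S_{j-N}(\Lambda^s g)|_{L^\infty} |\Delta_j f|_{L^2}$; applying Bernstein's inequality (using $t_0 > d/2$) and summing in $j$ against the $H^{t_0+1}$-weight on $f$ produces a bound $\lesssim |f|_{H^{t_0+1}}|g|_{H^{s-1}}$.

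The main obstacle will be making the paraproduct/remainder estimates in the second regime fully rigorous when $s-1<0$, where $g$ lives in a negative-order Sobolev space and the Bony decomposition of $\Lambda^s g$ and associated dyadic summations must be handled carefully (for instance by distinguishing the ranges $0 < s \leq 1$ and $1 < s < t_0+1$). The shape of the final bound, with the threshold $\max(t_0+1,s)$ on $f$, is dictated exactly by these two regimes: the value $t_0+1$ arises from controlling $|\nabla f|_{L^\infty}$ via Sobolev embedding in the paracommutator contribution, while the value $s$ only becomes the binding constraint when it exceeds $t_0+1$, stemming from the high-frequency paraproduct and remainder interactions.
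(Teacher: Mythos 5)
The paper does not prove this statement at all: its ``proof'' is the single line ``See Corollary B.9 in \cite{WWP}'', i.e.\ a citation to Lannes's monograph. Your argument is therefore necessarily a different route, and it is essentially correct. The split at $s=t_0+1$ is exactly the right dichotomy: for $s\geq t_0+1$ the classical Kato--Ponce commutator bound $|[\Lambda^s,f]g|_{L^2}\lesssim |\nabla f|_{L^\infty}|g|_{H^{s-1}}+|f|_{H^s}|g|_{L^\infty}$ plus the embeddings $H^{s-1}\hookrightarrow L^\infty$ and $H^{t_0}\hookrightarrow L^\infty$ close the estimate immediately, and for $0<s<t_0+1$ the Bony decomposition $[\Lambda^s,f]g=[\Lambda^s,T_f]g+(\Lambda^s T_g f-T_{\Lambda^s g}f)+(\Lambda^s R(f,g)-R(f,\Lambda^s g))$ does the job, with the paracommutator of order $s-1$ controlled by $|\nabla f|_{L^\infty}\lesssim|f|_{H^{t_0+1}}$ and the remaining four terms each bounded separately (no cancellation needed) by $|f|_{H^{t_0+1}}|g|_{H^{s-1}}$, since $\|S_{j-N}(\Lambda^s g)\|_{L^\infty}\lesssim 2^{j(d/2+1)}|g|_{H^{s-1}}$ and the weight $2^{j(d/2-t_0)}$ coming from $|f|_{H^{t_0+1}}$ is summable. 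Two small imprecisions worth fixing if you write this out: in the remainder $R(f,g)$ the two frequencies are comparable rather than ``high on $f$'' (the estimate goes through the same way, via Bernstein on the $f$-block and the spectral localization of the output in a ball of radius $\sim 2^j$); and for $s-1<0$ one should say a word on why $fg$ and hence the commutator are well defined for $g\in H^{s-1}$ (a paraproduct or duality argument using $f\in H^{t_0+1}$). What your approach buys is a self-contained proof from standard Littlewood--Paley calculus; what the citation buys the author is a sharper constant depending only on $|\nabla f|_{H^{\max(t_0,s-1)}}$ and no need to reproduce the dyadic bookkeeping.
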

\begin{proof}
	See Corollary B.9 in \cite{WWP}.
\end{proof}

\begin{mypp}\label{trivial}
	Let $G \in L^{\infty}(\mathbb{R}^d)$. Then for any $s\in\mathbb{R}$ and $f\in H^{s}(\mathbb{R}^d)$, then $G(D)f \in H^{s}(\mathbb{R}^d)$ and
	\begin{align*}
	|G(D)f|_{H^s} \leq |G|_{L^\infty}|f|_{H^s}.
	\end{align*}
\end{mypp}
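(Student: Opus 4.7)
The plan is to reduce the claim to a pointwise estimate on the Fourier side and conclude with Plancherel's theorem. First I would recall that by definition of the Sobolev norm used in this paper, $|f|_{H^s}=|\Lambda^s f|_2=|\langle\xi\rangle^s\widehat{f}|_{L^2(\mathbb{R}^d)}$, so the claim is equivalent to the inequality $|\langle\xi\rangle^s\,\widehat{G(D)f}|_{L^2}\leq |G|_{L^\infty}|\langle\xi\rangle^s\widehat f|_{L^2}$.

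Next, by the very definition of the Fourier multiplier $G(D)$, one has $\widehat{G(D)f}(\xi)=G(\xi)\widehat f(\xi)$ for a.e. $\xi\in\mathbb{R}^d$. Multiplying both sides by $\langle\xi\rangle^s$ and taking the pointwise bound $|G(\xi)|\le|G|_{L^\infty}$ yields
\begin{equation*}
\langle\xi\rangle^s|\widehat{G(D)f}(\xi)|\leq |G|_{L^\infty}\,\langle\xi\rangle^s|\widehat f(\xi)|\qquad \text{for a.e. }\xi\in\mathbb{R}^d.
\end{equation*}
Taking the $L^2(\mathbb{R}^d)$ norm of this pointwise inequality in $\xi$ and applying Plancherel delivers $|G(D)f|_{H^s}\leq |G|_{L^\infty}|f|_{H^s}$, which is precisely the announced bound; in particular it also shows $G(D)f\in H^s(\mathbb{R}^d)$.

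There is no serious obstacle here: the result is a one-line Plancherel computation and only requires being careful that (i) the Fourier transform extends naturally to tempered distributions, so $G(D)f$ is well-defined for any $f\in H^s(\mathbb{R}^d)$ regardless of the sign of $s$, and (ii) the product $G(\xi)\widehat f(\xi)$ is a measurable function (since $G\in L^\infty$ and $\widehat f\in L^2_{\mathrm{loc}}$ when $s\ge 0$, or more generally $\langle\xi\rangle^s\widehat f\in L^2$). No further tools from the appendix are needed.
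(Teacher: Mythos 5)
Your proof is correct and is exactly the argument the paper has in mind: the paper simply states that the result is immediate by Parseval's theorem, and your Plancherel computation on the Fourier side, with the pointwise bound $|G(\xi)|\le |G|_{L^\infty}$, is the standard way to make that one-liner explicit. No discrepancies.
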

\begin{proof}
	The result is immediate by Parseval's theorem.
\end{proof}
\begin{mypp}[Commutator estimates with symbols of order $0$]\label{Commutator estimates}
	Let $t_0 > d/2$, $s \geq 0$ and $G \in W^{1,\infty}(\mathbb{R}^d)$ be such that $\langle \cdot \rangle \nabla G \in L^\infty(\mathbb{R}^d)$ and for any $f\in H^{s}\cap H^{t_0+1}(\mathbb{R}^d)$ then, for all $g \in H^{s-1}(\mathbb{R}^d)$,
	\begin{align*}
	|[G(D),f]g|_{H^s} \leq C\, |f|_{H^{\max{(t_0+1,s)}}}|g|_{H^{s-1}}
	\end{align*}
	    with $C$ depending uniquely on $s$ and $t_0$, and $|(G,\langle \cdot \rangle \nabla G)|_{L^\infty}$.
\end{mypp}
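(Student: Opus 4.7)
My approach is to combine a Littlewood–Paley (paraproduct) decomposition with the mean value theorem applied to the symbol $G$. The starting point is the Fourier representation
$$\widehat{[G(D),f]g}(\xi) \;=\; \int \bigl[G(\xi)-G(\eta)\bigr]\hat{f}(\xi-\eta)\hat{g}(\eta)\,d\eta,$$
and the fundamental theorem of calculus
$$G(\xi)-G(\eta) \;=\; (\xi-\eta)\cdot\int_0^1 \nabla G\bigl(\eta+t(\xi-\eta)\bigr)\,dt,$$
which exhibits $[G(D),f]g$ as a bilinear Fourier multiplier applied to $(\nabla f,\,g)$ with symbol $m(\xi,\eta)=\int_0^1 \nabla G(\eta+t(\xi-\eta))\,dt$. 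The crucial consequence of the hypothesis $\langle\cdot\rangle\nabla G\in L^\infty$ is that $|m(\xi,\eta)|$ decays like $\langle\eta+t(\xi-\eta)\rangle^{-1}$ uniformly in $t$, and this is the source of the ``gain of one derivative'' visible in the estimate.

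Next, I would write $f=\sum_N \Delta_N f$ and $g=\sum_M \Delta_M g$ via dyadic Littlewood–Paley blocks and split
$$[G(D),f]g \;=\; \sum_{N,M}[G(D),\Delta_N f]\Delta_M g,$$
into the three standard regimes: low-high ($N\ll M$), high-high ($N\sim M$), and high-low ($N\gg M$). In the low-high regime one has $\langle\eta+t(\xi-\eta)\rangle \sim M$ along the integration path, so $|m(\xi,\eta)|\lesssim M^{-1}|\langle\cdot\rangle\nabla G|_{L^\infty}$; combined with the factor $(\xi-\eta)=\widehat{-i\nabla f}$-side estimate and Bernstein, this yields
$$\bigl|[G(D),\Delta_N f]\Delta_M g\bigr|_{L^2} \;\lesssim\; |\langle\cdot\rangle\nabla G|_{L^\infty}\cdot N\,|\Delta_N f|_{L^\infty}\cdot M^{-1}|\Delta_M g|_{L^2}.$$
Weighting by $M^s$ and summing in $N\leq M$ via $\sum_N N|\Delta_N f|_{L^\infty}\lesssim |f|_{H^{t_0+1}}$ (Bernstein plus Cauchy--Schwarz), then $\ell^2$-summing in $M$ using Plancherel, gives the bound $|f|_{H^{t_0+1}}|g|_{H^{s-1}}$ for this contribution. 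The high-low regime is symmetric: place $g$ in $L^\infty$ using $H^{t_0}\hookrightarrow L^\infty$ and exploit the decay of $m$ along the path where now $\langle\eta+t(\xi-\eta)\rangle\sim N$, which produces an $N^{-1}$ gain absorbing the $N^s$ weight against $|\Delta_N f|_{L^2}$, and uses the $H^{\max(t_0+1,s)}$ norm of $f$ when $s>t_0+1$.

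The diagonal (high-high) regime $N\sim M$ is where the commutator cancellation is weakest: one falls back on the brute bound $|G(\xi)-G(\eta)|\lesssim |G|_{L^\infty}$, so only the $L^\infty$-control of $G$ is available, and the gain of one derivative must be entirely borrowed from the dyadic relation $N\sim M$ (which converts $M^s|\Delta_M g|_{L^2}$ into $N^s$ acting on $f$). Using Bony's almost-orthogonality (the Fourier support of the output is in a ball of radius $\sim M$), an $\ell^2$-summation then closes the estimate, with the $H^{\max(t_0+1,s)}$-norm of $f$ entering precisely here. The main obstacle I anticipate is this high-high piece, because the limited regularity assumed on $G$ (only $\nabla G\in L^\infty$ with the $\langle\cdot\rangle$ weight, no higher derivatives) rules out standard symbolic calculus; every bound must be phrased in terms of $|(G,\langle\cdot\rangle\nabla G)|_{L^\infty}$ alone, and the constants must be tracked to ensure the final $C$ depends only on $s,t_0$ and this quantity, as stated.
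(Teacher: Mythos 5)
The paper does not prove this proposition at all: it simply cites Lemma 2.5 of Duch\^ene--Melinand, so your self-contained paraproduct argument is necessarily a different route. Your low--high and high--high regimes are essentially correct (for low--high, note that with a merely bounded, non-smooth bilinear symbol you should pass to absolute values in frequency and use Young's inequality, $\|\,|\hat u|*|\hat w|\,\|_{L^2}\le\|\hat u\|_{L^1}\|\hat w\|_{L^2}$, together with $\|\widehat{\Delta_N f}\|_{L^1}\lesssim N^{d/2}|\Delta_N f|_{L^2}$ and $t_0>d/2$; this is what your summation $\sum_N N|\Delta_N f|_{L^\infty}\lesssim|f|_{H^{t_0+1}}$ implicitly does).

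The genuine gap is in the high--low regime $N\gg M$, where both of your claims fail. First, the asserted decay $\langle\eta+t(\xi-\eta)\rangle\sim N$ along the segment is false: at $t$ near $0$ the point is near $\eta$, of size $\sim M\ll N$, so the best uniform bound on $m$ over the path is $\langle M\rangle^{-1}$, and combined with $|\xi-\eta|\sim N$ the mean-value mechanism only yields $|G(\xi)-G(\eta)|\lesssim N\langle M\rangle^{-1}$, which is \emph{worse} than the trivial bound $2|G|_{L^\infty}$; the gradient hypothesis buys nothing here. Second, you cannot ``place $g$ in $L^\infty$ via $H^{t_0}\hookrightarrow L^\infty$'': the hypothesis only gives $g\in H^{s-1}$, which for $s=0$ is $H^{-1}$ and carries no $L^\infty$ (nor $H^{t_0}$) control. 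The regime must instead be closed by the brute bound $|G(\xi)-G(\eta)|\le 2|G|_{L^\infty}$ together with the observation that the output frequency is $\sim N$ (the frequency of $f$, not of $g$): one writes
\begin{align*}
N^s\,|[G(D),\Delta_N f]\Delta_M g|_{L^2}\;\lesssim\;|G|_{L^\infty}\,N^s|\Delta_N f|_{L^2}\,M^{d/2}|\Delta_M g|_{L^2}
=|G|_{L^\infty}\,N^s|\Delta_N f|_{L^2}\,M^{\frac d2+1-s}\bigl(M^{s-1}|\Delta_M g|_{L^2}\bigr),
\end{align*}
and absorbs the loss $M^{\frac d2+1-s}\le N^{\frac d2+1-s}$ into the surplus on the $f$ side, using $\sum_N N^{\frac d2+1}|\Delta_N f|_{L^2}\lesssim|f|_{H^{t_0+1}}$ when $s\le \frac d2+1$ and direct summability in $M$ when $s>\frac d2+1$, together with almost-orthogonality of the outputs in $N$. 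So the one-derivative gain in this regime comes from bookkeeping with $t_0>d/2$ and $M\le N$, not from the symbol cancellation; as written, your argument for this block does not close.
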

\begin{proof}
	See Lemma 2.5 in \cite{DucheneMelinand22}.
\end{proof}

\bibliographystyle{plain}
\bibliography{Biblio.bib}

\end{document}